\newtheorem{theorem}{Theorem}[section]
\theoremstyle{definition} 
\newtheorem{definition}[theorem]{Definition}
\theoremstyle{plain}
\newtheorem{prop}[theorem]{Proposition}
\theoremstyle{definition} 
\newtheorem{example}[theorem]{Example}
\theoremstyle{plain}
\newtheorem{lemma}[theorem]{Lemma}
\theoremstyle{definition}
\newtheorem{remark}[theorem]{Remark}
\theoremstyle{definition}
\newtheorem{question}[theorem]{Question}
\theoremstyle{plain}
\newtheorem{theoremintro}{Theorem}[section]
\DeclareMathOperator{\GL}{GL}
\DeclareMathOperator{\Aut}{Aut}
\DeclareMathOperator{\Id}{Id}
\DeclareMathOperator{\Gal}{Gal}
\DeclareMathOperator{\Perm}{Perm}
\DeclareMathOperator{\rd}{red}
\DeclareMathOperator{\spn}{span}
\DeclareMathOperator{\Hom}{Hom}
\DeclareMathOperator{\Sp}{span}
\DeclareMathOperator{\ab}{ab}
\newcommand{\Z}{\mathbb{Z}}
\newcommand{\R}{\mathbb{R}}
\newcommand{\C}{\mathbb{C}}
\newcommand{\Q}{\mathbb{Q}}
\newcommand{\Qbar}{\overline{\mathbb{Q}}}
\newcommand{\N}{\mathbb{N}}
\newcommand{\G}{\mathcal{G}}
\newcommand{\m}{\mathfrak{m}}
\newcommand{\g}{\mathfrak{g}}
\newcommand{\n}{\mathfrak{n}}
\newcommand{\asi}[1]{\prescript{\sigma}{}{#1}}
\title{Classification of $K$-forms in nilpotent Lie algebras\\ associated to graphs}
\author{Jonas Der\'e and Thomas Witdouck\thanks{KU Leuven Campus Kortrijk Kulak, Department of Mathematics. The second author was supported by a PhD fellowship of the Research Fund - Flanders (FWO).}}
\begin{document}
	\maketitle
	
	\begin{abstract}
	Given a simple undirected graph, one can construct from it a $c$-step nilpotent Lie algebra for every $c \geq 2$ and over any field $K$, in particular also over the real and complex numbers. These Lie algebras form an important class of examples in geometry and algebra, and it is interesting to link their properties to the defining graph. In this paper, we classify the isomorphism classes of $K$-forms in these real and complex Lie algebras for any subfield $K \subset \C$ from the structure of the graph. As an application, we show that the number of rational forms up to isomorphism is always one or infinite, with the former being true if and only if the group of graph automorphisms is generated by transpositions.
	\end{abstract}
	
	\section{Introduction}
	
	Given a field extension $K \subset L$ and Lie algebras $\mathfrak{n}^L$ and $\mathfrak{m}^K$ defined over $L$ and $K$ respectively, we say that $\mathfrak{m}^K$ is a $K$-form of $\mathfrak{n}^L$ if the tensor product $\mathfrak{m}^K \otimes_K L$, so by extending the scalars on $\mathfrak{m}^K$, is isomorphic to $\mathfrak{n}^L$ as Lie algebras. For a more detailed discussion of this notion, we refer to section \ref{sec:GaloisCohomology} below. Determining all $K$-forms up to $K$-isomorphism of a general Lie algebra over $L$ is a hard problem, which has been solved only for some specific fields and Lie algebras before. In this paper, we study this problem for the class of nilpotent Lie algebras associated to graphs, a family generalizing free $c$-step nilpotent Lie algebras and closed under taking direct sums. These Lie algebras have recently been studied by many different authors, including their automorphism group in \cite{dm21-1} and their geometric properties in \cite{ddm18-1,niko20-1,ovan22-1}.
	
	We first motivate the study of $K$-forms in (nilpotent) Lie algebras by relating it to several problems in algebra and geometry. As Grunewald and Segal discuss in \cite{gs84-1}, determining all rational forms of real nilpotent Lie algebras is part of the classification problem for finitely generated torsion-free nilpotent groups. This follows from the classical result of Mal'cev in \cite{malc49-2} that gives a one-to-one correspondence between rational nilpotent Lie algebras up to $\Q$-isomorphism and finitely generated torsion-free nilpotent groups up to abstract commensurability, i.e.~up to having an isomorphic subgroup of finite index. One of the projects suggested by Grunewald and Segal, see \cite[3.5.]{gs84-1}, is to characterize rational forms for certain interesting classes of complex nilpotent Lie algebras, as a first step to understand the finitely generated torsion-free nilpotent groups corresponding to them. Our results can be seen as an answer to this question in the large class of nilpotent Lie algebras associated to graphs.
	
	The classification of rational forms in real nilpotent Lie algebras is also of importance in certain geometrical problems on nilmanifolds, i.e.~the quotients of a simply connected nilpotent Lie groups by a cocompact lattice. Indeed, such a lattice is a finitely generated torsion-free nilpotent group and thus gives a unique rational Lie algebra which in addition is a rational form of the real nilpotent Lie algebra corresponding to the Lie group. Geometrical problems on the nilmanifold often only depend on the isomorphism type of the rational form. Examples of such problems are the existence of Anosov diffeomorphisms and expanding maps, important types of dynamical systems that conjecturally only exist on manifolds finitely covered by nilmanifolds, see  \cite{dd14-1,dv11-1,smal67-1} for more details. In a subsequent paper, we will apply the results of this paper to find a full characterization of Anosov diffeomorphisms on nilmanifolds modeled on Lie groups associated to graphs, following previous work in \cite{dm05-1,dm21-1}.
	
	As real Lie algebras are exactly the ones corresponding to Lie groups, finding real forms of a complex Lie algebra is important for studying Lie groups. For example, the classification of semisimple Lie groups, or hence of semisimple real Lie algebras heavily depends on the study of real forms and the relation to the underlying real Lie algebra of complex Lie algebras, see e.g.~\cite{helg01-1}. Furthermore, having a real form determines the structure of the complex conjugate Lie algebras as described in \cite{dere19-1} and hence from \cite{dlv12-1} the number of distinct almost-complex structures on nilmanifolds having a quasi-K\"ahler Chern-flat metric in the $2$-step nilpotent case. Note that it is well-known that any complex Lie algebra can have at most finitely many real forms up to isomorphism, see \cite[Remark 4.3.]{dlv12-1}.
	
	So far, the existence of $K$-forms has only been studied for some specific classes of Lie algebras, mostly in low dimensions. For example, it is an easy exercise to show that free nilpotent Lie algebras over any field $L$ have exactly one $K$-form for every $K \subset L$. In \cite{seme02-1} the author considers several classes of $6$-dimensional real nilpotent Lie algebras and shows via direct computations that they have either $1$ or infinitely many rational forms. This paper also gives a method to construct rational forms in the direct sum of two free nilpotent Lie algebras. Later, Lauret computes rational forms of certain real nilpotent Lie algebras of dimension $8$ in \cite{laur08-1} using the Pfaffian form, including a description which ones correspond to a nilmanifold admitting an Anosov diffeomorphism. Note that all of the Lie algebras in \cite{seme02-1}, including the direct sum of free Lie algebras, and some of the Lie algebras in \cite{laur08-1} fall in the class of Lie algebras associated to a graph. Our results work for any subfield $K \subset \C$, but below we present them in the special case of $K = \Q$ and $K= \R$, as these are the most important for the applications mentioned above.

	Given a graph $\G$, one can define an equivalence relation on the vertices by saying two vertices are equivalent if and only if their transposition defines a graph automorphism. This gives rise to a quotient graph $\overline{\G}$, for which we give the exact definitions in Section \ref{sec:quotientGraph}. At the beginning of section \ref{sec:formsNilpLieAlgGraphs}, we recall how one can construct for any integer $c > 1$ a $c$-step nilpotent Lie algebra $\n_{\G,c}^K$ from the graph $\G$ over a field $K \subset \C$. If $\Gal(L/K)$ is a Galois extension of subfields of $\C$, then the Galois group $\Gal(L/K)$ naturally acts on $\n_{\G, c}^L$ by semi-linear maps which fix the vertices. This action will be written as $\asi{v}$ for any $\sigma \in \Gal(L/\Q)$ and $v \in \n_{\G, c}^L$. In Section \ref{sec:classKformsL}, we define for any field $K \subset \C$ a group morphism
	\[i:\Aut(\overline{\G}) \to \Aut(\n_{\G,c}^K),\]
	which is in fact not natural but easy to construct after one chooses an order on the vertices. From now on, endow $\Aut(\overline{\G})$ with the discrete topology and $\Gal(L/K)$ with the Krull topology (which is the discrete topology if $\Gal(L/K)$ is finite). For any continuous morphism $\rho:\Gal(L/K) \to \Aut(\overline{\G}): \sigma \mapsto \rho_\sigma$, we define the subset
	\[ \n_{\rho, c}^K = \{ v \in \n^L_{\G, c} \mid \forall \sigma \in \Gal(L/K): i(\rho_\sigma)(\asi{v}) = v \} \]
	which is a $K$-Lie algebra if one restricts the addition, $K$-scalar multiplication and Lie bracket of $\n_{\G, c}^L$ to it. As mentioned in section \ref{sec:GaloisCohomologyDefinitions}, a classical result states that $\n_{\rho, c}^K$ is a $K$-form of $\n_{\G, c}^L$. 
	
	As a main result of this paper, it is shown that up to $\Q$-isomorphism, the forms defined above are all the rational forms of $\n_{\G, c}^\C$. All fields in the theorem are considered as subfields of $\C$.
	
	\begin{theoremintro}[Rational forms]
		\label{thm:injectiveVersionClassificationRationalForms}
		Let $\G$ be a simple undirected graph and $\n_{\G, c}^\C$ the associated $c$-step nilpotent complex Lie algebra. Any rational form of $\n_{\G,c}^\C$ is $\Q$-isomorphic to $\n_{\rho, c}^\Q$ for some finite degree Galois extension $L/\Q$ and an injective group morphism $\rho:\Gal(L/\Q) \to \Aut(\overline{\G})$. If $K/\Q$ is another finite degree Galois extension with an injective group morphism $\eta:\Gal(K/\Q) \to \Aut(\overline{\G})$, then $\n_{\rho, c}^\Q$ and $\n_{\eta, c}^\Q$ are $\Q$-isomorphic if and only if $L = K$ and there exists a $\varphi \in \Aut(\overline{\G})$ such that $\varphi \rho(\sigma) \varphi^{-1} = \eta(\sigma)$ for all $\sigma \in \Gal(L/\Q)$. The Lie algebra $\n_{\rho, c}^\Q$ is a rational form of $\n_{\G, c}^\R$ if and only if $L \subset \R$.
	\end{theoremintro}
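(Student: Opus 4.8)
The plan is to prove the entire statement by Galois descent, reducing the classification of rational forms to a computation in non-abelian Galois cohomology. Since the structure constants of $\n_{\G,c}^\C$ are rational, any rational form already becomes isomorphic to $\n_{\G,c}^\C$ over the algebraic closure $\Qbar$ (fixed inside $\C$), so the $\Q$-isomorphism classes of rational forms of $\n_{\G,c}^\C$ are in bijection with the pointed cohomology set $H^1(\Gal(\Qbar/\Q), \Aut(\n_{\G,c}^{\Qbar}))$. Under this bijection the explicit form $\n_{\rho,c}^\Q$ corresponds to the class of the continuous cocycle $\sigma \mapsto i(\rho_\sigma)$, exactly by the descent statement recalled in Section \ref{sec:GaloisCohomologyDefinitions}. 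Thus the whole theorem reduces to computing this pointed set, its equivalence relation, and the location of the distinguished class.

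The key input is the structure of the automorphism group as an algebraic group, for which I would invoke the description in \cite{dm21-1}. The aim is to show that $\Aut(\n_{\G,c})$ fits into a split short exact sequence
\[ 1 \longrightarrow \Aut^0 \longrightarrow \Aut(\n_{\G,c}) \longrightarrow \Aut(\overline{\G}) \longrightarrow 1, \]
in which the quotient by the identity component is the finite constant group $\Aut(\overline{\G})$, the splitting is furnished by the morphism $i$, and the identity component $\Aut^0$ is an iterated extension of a product of general linear groups (one factor $\GL_{n}$ per vertex of $\overline{\G}$, with $n$ the size of the corresponding equivalence class of vertices of $\G$) by a unipotent group. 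The essential point is that a transposition of two equivalent vertices acts through $\Aut^0$, so that the component group records precisely the automorphisms of the quotient graph and nothing more. I expect this step to be the main obstacle: one must pin the structure of $\Aut(\n_{\G,c})$ down tightly enough that its group of components is identified with $\Aut(\overline{\G})$ exactly, and that $\Aut^0$ involves only general linear and unipotent factors.

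Granting this structure, the cohomology is routine. Over a field of characteristic zero one has $H^1 = *$ for $\GL_n$ by Hilbert's Theorem~90 and for unipotent groups by the additive analogue (equivalently the normal basis theorem), and these vanishings propagate through the extension to give $H^1(\Gal(\Qbar/\Q), \Aut^0) = *$; the same holds for every inner twist of $\Aut^0$, as such a twist is again of this shape. Feeding this into the long exact sequence of the split extension, and using the section $i$ for surjectivity, yields a bijection
\[ H^1(\Gal(\Qbar/\Q), \Aut(\n_{\G,c})) \;\cong\; H^1(\Gal(\Qbar/\Q), \Aut(\overline{\G})). \]
As $\Aut(\overline{\G})$ is finite with trivial Galois action, the right-hand side is the set of continuous homomorphisms $\Gal(\Qbar/\Q) \to \Aut(\overline{\G})$ modulo conjugation by $\Aut(\overline{\G})$. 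Each such homomorphism has open kernel, hence factors as an injection through $\Gal(L/\Q)$ with $L$ the fixed field of the kernel, a finite Galois extension of $\Q$; this produces the $\rho$ of the statement, and two homomorphisms are cohomologous exactly when their fixed fields agree, $L=K$, and the injections differ by an inner $\varphi \in \Aut(\overline{\G})$. Matching this abstract bijection with the concrete fixed-point construction via the cocycle $\sigma \mapsto i(\rho_\sigma)$ then gives the first two assertions.

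For the final assertion, let $\kappa \in \Gal(\Qbar/\Q)$ denote the complex conjugation, which generates $\Gal(\Qbar/\Qbar\cap\R)$. Base changing $\n_{\rho,c}^\Q$ to $\R$ produces the real form of $\n_{\G,c}^\C$ whose class in $H^1(\Gal(\C/\R),\Aut(\n_{\G,c}^\C))$ is the restriction of $[\rho]$ along $\langle \kappa\rangle \hookrightarrow \Gal(\Qbar/\Q)$, and the split form $\n_{\G,c}^\R$ is the distinguished class. Running the same reduction over $\R$, this restricted class is carried to the class of $\rho(\kappa)$ in $H^1(\langle\kappa\rangle, \Aut(\overline{\G}))$, which is trivial precisely when $\rho(\kappa) = \Id$. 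Since $L/\Q$ is Galois, $\kappa \in \ker\rho = \Gal(\Qbar/L)$ holds if and only if $\kappa$ fixes $L$ pointwise, that is if and only if $L \subset \R$, which is the claimed criterion.
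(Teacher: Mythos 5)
Your proposal is correct in its overall architecture and reaches the theorem by the same descent skeleton as the paper: reduce to $H^1\bigl(\Qbar/\Q,\Aut(\n_{\G,c}^{\Qbar})\bigr)$ via Proposition \ref{prop:ExistsIsoAlgClosure} and Theorem \ref{thm:GaloisDescentLieAlgebras}, identify this set with $H^1\bigl(\Qbar/\Q,\Aut(\overline{\G})\bigr)$, read off continuous homomorphisms modulo conjugation, factor each through the finite quotient $\Gal(\Qbar^{\ker\rho}/\Q)$, and settle the real case by restricting along $\Gal(\C/\R)\to\Gal(\Qbar/\Q)$ exactly as in equation (\ref{eq:equivalenceFormsOfRealLieAlgebra}). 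Where you genuinely diverge is in the middle step. The paper does not run the twisting long exact sequence on the identity component of the full automorphism group: it first invokes Serre's lemma on Cartan subgroups (Lemma \ref{lem:galoisCohCartanSubgroup}) to get surjectivity of $H^1$ from the normalizer of the vertex-diagonal torus, which lands inside the graded automorphisms, and then performs an explicit cocycle computation for the semidirect product $\bigl(\prod_\lambda\GL(V_\lambda)\bigr)\rtimes\Aut(\overline{\G})$ (Theorem \ref{thm:GalCohSemiDirectPerm}). This buys two things: the unipotent kernel of $p$ and the unipotent radical $M$ never have to be twisted or analyzed cohomologically, and the argument stays self-contained. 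Your route instead requires the vanishing of $H^1$ for \emph{every} inner twist of $\Aut^0$; that is true, but your justification that ``such a twist is again of this shape'' is slightly off --- a twist of $\prod_j\GL_{n_j}$ by a cocycle permuting the factors is a product of Weil restrictions $R_{N_j/\Q}\GL_{n_j}$, not a product of general linear groups over $\Q$, and its $H^1$ vanishes by Shapiro's lemma together with Hilbert 90. This is precisely what the hypothesis ``$H^1(L/N,G_i)$ is trivial for all finite intermediate $N$'' in Theorem \ref{thm:GalCohSemiDirectPerm} encodes, and the stabilizer bookkeeping in that proof is the hands-on version of the Shapiro argument you would need to supply; with that point made precise, your approach goes through and is a legitimate, somewhat more standard-looking alternative to the paper's Cartan-normalizer reduction.
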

	
	\noindent As we show in Theorem \ref{thm:numberofrational}, the above result also implies that $\n_{\G, c}^\C$ or $\n_{\G, c}^\R$ has either one or infinitely many rational forms. The former is true if and only if the group $\Aut(\overline{\G})$ is trivial, which is the case if and only if the group $\Aut(\G)$ is generated by transpositions.
	
	Note that one important restriction to apply Theorem \ref{thm:injectiveVersionClassificationRationalForms} is that in general it is not known which finite groups can occur as the Galois group $\Gal(K/ \Q)$ of a finite degree Galois extension $\Q \subset K$, a problem known as the inverse Galois problem. In the special case where $\Aut(\overline{\G})$ has order 2, all injective group morphisms are of the form $\Gal(\Q(\sqrt{d})/\Q) \to \Aut(\overline{\G})$ with $d \in \Z \setminus \{0\}$ and hence it is possible to explicitly write down the structural constants of all rational forms. We illustrate this approach in example \ref{ex:twoCopiesHeisenberg}.
	
	For the real forms of the complex Lie algebra, on the other hand, this issue does not occur as the Galois group $\Gal(\C / \R)$ is known. Let $\tau \in \Gal(\C/\R)$ denote the complex conjugation automorphism, then for any involution $\varphi \in \Aut(\overline{\G})$ we can define the real form
	\[ \n_{\varphi, c}^K := \n_{\rho, c}^K \quad \quad \text{with} \quad \rho:\Gal(\C/\R) \to \Aut(\overline{\G}): \tau \mapsto \varphi.\]

	\begin{theoremintro}[Real forms]
		\label{thm:ClassificationOfRealForms}
		Let $\G$ be a simple undirected graph and $\n_{\G, c}^\C$ the associated $c$-step nilpotent complex Lie algebra. Every real form of $\n_{\G, c}^\C$ is $\R$-isomorphic to $\n_{\varphi,c}^\R$ for some involution $\varphi \in \Aut(\overline{\G})$. If $\phi \in \Aut(\overline{\G})$ is another involution then the real forms $\n_{\varphi, c}^\R$ and $\n_{\phi, c}^\R$ are $\R$-isomorphic if and only if $\varphi$ and $\phi$ are conjugate in $\Aut(\overline{\G})$.
	\end{theoremintro}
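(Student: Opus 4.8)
The plan is to run the same argument that proves Theorem \ref{thm:injectiveVersionClassificationRationalForms}, but now over the base field $K = \R$ rather than $\Q$. The decisive simplification is that $\C$ is the algebraic closure of $\R$ and $\Gal(\C/\R) = \{1,\tau\}$ is cyclic of order two, so—unlike in the rational case—there is no family of splitting fields to range over and the inverse Galois obstruction noted after Theorem \ref{thm:injectiveVersionClassificationRationalForms} never arises. First I would observe that every real form $\m^\R$ of $\n_{\G,c}^\C$ splits over $\C$: by definition $\m^\R \otimes_\R \C \cong \n_{\G,c}^\C$, so the relevant extension is always $\C/\R$ and the relevant Galois group is the fixed finite group $\Gal(\C/\R)$, which (being finite with the Krull topology) makes every morphism out of it automatically continuous.

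Next I would invoke the correspondence $\rho \mapsto \n_{\rho,c}^\R$ in the form established for Theorem \ref{thm:injectiveVersionClassificationRationalForms}: $\R$-isomorphism classes of real forms of $\n_{\G,c}^\C$ are in bijection with $\Aut(\overline{\G})$-conjugacy classes of group morphisms $\rho:\Gal(\C/\R) \to \Aut(\overline{\G})$. The elementary point is that such a $\rho$ is determined by $\varphi := \rho_\tau$, and since $\tau^2 = 1$ this image must satisfy $\varphi^2 = \rho_{\tau^2} = \Id$; conversely every involution $\varphi \in \Aut(\overline{\G})$—with the identity counted as the trivial involution—defines a morphism with $\rho_\tau = \varphi$. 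Under this bijection one has $\n_{\rho,c}^\R = \n_{\varphi,c}^\R$ by definition, and $\varphi = \Id$ recovers the standard real form $\n_{\G,c}^\R = \{\,v \in \n_{\G,c}^\C \mid \ata{v} = v\,\}$. This yields the existence statement: every real form is $\R$-isomorphic to some $\n_{\varphi,c}^\R$.

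For the isomorphism criterion I would translate conjugacy of morphisms into conjugacy of involutions. Two morphisms $\rho,\eta$ with $\rho_\tau = \varphi$ and $\eta_\tau = \phi$ are conjugate by some $\psi \in \Aut(\overline{\G})$ exactly when $\psi\,\rho_\sigma\,\psi^{-1} = \eta_\sigma$ for all $\sigma \in \Gal(\C/\R)$; as $\Gal(\C/\R) = \langle \tau \rangle$ and the relation at the identity is automatic, this collapses to the single condition $\psi\varphi\psi^{-1} = \phi$. Hence $\n_{\varphi,c}^\R$ and $\n_{\phi,c}^\R$ are $\R$-isomorphic if and only if $\varphi$ and $\phi$ are conjugate in $\Aut(\overline{\G})$, as claimed.

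The genuinely hard input—that an arbitrary automorphism of $\n_{\G,c}^\C$ can be normalised so that the Galois cocycle takes values in the finite group $\Aut(\overline{\G})$—is the same reduction that drives Theorem \ref{thm:injectiveVersionClassificationRationalForms}, so it is inherited here at no extra cost; the only point that I expect to require care is the bookkeeping between the two formulations. Theorem \ref{thm:injectiveVersionClassificationRationalForms} is phrased with \emph{injective} morphisms and a \emph{varying} splitting field $L$, whereas over $\R$ the splitting field is fixed to be $\C$. I would reconcile these by noting that a nontrivial involution $\varphi$ gives an injective $\rho:\Gal(\C/\R) \hookrightarrow \Aut(\overline{\G})$ with splitting field $\C$, while $\varphi = \Id$ gives the trivial (vacuously injective) morphism from $\Gal(\R/\R)$ with splitting field $\R$; these two cases correspond precisely to the two intermediate fields of $\R \subset \C$, so together they exhaust all involutions and no isomorphism class is omitted or double-counted.
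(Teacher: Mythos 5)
Your proposal is correct and follows essentially the same route as the paper: the paper's proof of Theorem \ref{thm:ClassificationOfRealForms} likewise combines Theorem \ref{thm:GaloisCohAutnG} (applied with $K = \R$, so $\overline{K} = \C$) with Galois descent to get the bijection $H^1(\C/\R, \Aut(\overline{\G})) \to \mathcal{F}_\R(\n^\C_{\G,c})$, and then observes that a cocycle valued in the trivially-acted-on group $\Aut(\overline{\G})$ is exactly a morphism determined by the involution $\rho(\tau)$, with cocycle equivalence collapsing to conjugacy of involutions. The only cosmetic difference is that the paper invokes the cohomology computation for $K=\R$ directly rather than routing the bookkeeping through the injective-morphism phrasing of Theorem \ref{thm:injectiveVersionClassificationRationalForms}, but the substance is identical.
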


	\noindent Theorem \ref{thm:ClassificationOfRealForms} thus gives a method to compute the exact number of real forms in $\n_{\G,c}^\C$ from only the graph $\G$. As an application we construct several examples of complex Lie algebras with exactly $k$ different real forms for every $k \in \N$.
	
	A Lie algebra is called decomposable if it is isomorphic to the direct sum of two non-trivial ideals. In general, a $K$-form of a decomposable Lie algebra is not always decomposable itself. Any automorphism of the quotient graph induces a permutation on the set of connected components $\mathcal{C}(\G)$ of the graph $\G$. This induces a morphism $\chi:\Aut(\overline{\G}) \to \Perm(\mathcal{C}(\G))$, for which the exact definition is given in section \ref{sec:indecomposableForms}. The following theorem characterizes the indecomposable $K$-forms of a Lie algebra associated to a graph.
	
	\begin{theoremintro}[Indecomposable $K$-forms]
		\label{thm:indecomposableForms}
		Let $\G = (S, E)$ be a graph, $L/K$ a Galois extension of subfields of $\C$ and $\rho:\Gal(L/K) \to \Aut(\overline{\G})$ a continuous morphism. The Lie algebra $\n_{\rho, c}^K$ is indecomposable if and only if the $\chi \circ \rho$-action on the set of connected components $\mathcal{C}(\G)$ is transitive.
	\end{theoremintro}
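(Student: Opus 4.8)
The plan is to reduce everything to the decomposition of $\n_{\G, c}^L$ into the subalgebras attached to the connected components of $\G$ and to track how the twisted Galois action permutes these pieces. First I would recall that if $\mathcal{C}(\G)$ denotes the set of connected components, then $\n_{\G, c}^L = \bigoplus_{C \in \mathcal{C}(\G)} \n_{C, c}^L$ as a direct sum of ideals: two vertices lying in different components are non-adjacent, so their bracket vanishes, and an iterated application of the Jacobi identity shows that every bracket mixing two components is zero; moreover each $\n_{C, c}^L$ is indecomposable precisely because $C$ is connected. The key observation is that $\n_{\rho, c}^K$ is the fixed-point set of the semilinear action $\Phi_\sigma(v) = i(\rho_\sigma)(\asi{v})$, and since $i(\rho_\sigma)$ is induced by a graph automorphism it carries $\n_{C, c}^L$ onto $\n_{\chi(\rho_\sigma)(C), c}^L$, while taking $\sigma$-conjugates of coordinates preserves each component subalgebra, as the vertices are fixed. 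Hence $\Phi_\sigma$ permutes the summands $\{\n_{C, c}^L\}$ exactly according to $(\chi \circ \rho)(\sigma)$. I would then prove the two implications in contrapositive form.

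For the direction that a non-transitive action yields a decomposable form, let $\mathcal{O}$ be a proper non-empty orbit of $\chi \circ \rho$ on $\mathcal{C}(\G)$ and set $\mathfrak{a}^L = \bigoplus_{C \in \mathcal{O}} \n_{C, c}^L$ and $\mathfrak{b}^L = \bigoplus_{C \notin \mathcal{O}} \n_{C, c}^L$. By the previous paragraph both are $\Phi$-stable ideals, so taking $\Gal(L/K)$-fixed points and using Galois descent (which also guarantees that the fixed parts are non-zero $K$-forms of the corresponding $L$-ideals) gives a genuine decomposition $\n_{\rho, c}^K = \mathfrak{a} \oplus \mathfrak{b}$ into two non-trivial ideals, so $\n_{\rho, c}^K$ is decomposable.

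For the converse, suppose $\n_{\rho, c}^K = \mathfrak{a} \oplus \mathfrak{b}$ is a non-trivial decomposition into ideals. Extending scalars to $L$ produces a decomposition $\n_{\G, c}^L = \mathfrak{a}^L \oplus \mathfrak{b}^L$ into ideals, and because $\mathfrak{a}, \mathfrak{b}$ are $K$-subspaces both $\mathfrak{a}^L$ and $\mathfrak{b}^L$ are $\Phi$-stable. The heart of the argument is to show that such an ideal decomposition must be aligned with the connected components, i.e.\ that there is a subset $\mathcal{C}_a \subseteq \mathcal{C}(\G)$ with $\mathfrak{a}^L = \bigoplus_{C \in \mathcal{C}_a} \n_{C, c}^L$. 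Granting this, $\Phi$-stability of $\mathfrak{a}^L$ immediately forces $\mathcal{C}_a$ to be invariant under $\chi \circ \rho$, and since $\mathfrak{a}, \mathfrak{b} \neq 0$ the subset $\mathcal{C}_a$ is proper and non-empty, so the action is not transitive.

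The main obstacle is exactly this alignment step, which amounts to a uniqueness (Krull--Schmidt type) statement for the decomposition of $\n_{\G, c}^L$ into indecomposable ideals: I must rule out \emph{diagonal} summands that split a single $\n_{C, c}^L$ across $\mathfrak{a}^L$ and $\mathfrak{b}^L$. The delicate cases are isomorphic components and, worse, isolated vertices, which contribute abelian one-dimensional factors admitting many decompositions; this is precisely where the passage to the quotient graph $\overline{\G}$ matters, since transposition-equivalent (in particular all isolated) vertices are collapsed there and the transitivity hypothesis on $\mathcal{C}(\G)$ cannot be met by such ambiguous factors. I would establish the uniqueness by recovering the component subalgebras canonically from the full bracket structure up to step $c$ --- for instance, for $c = 2$ the components are read off from the degenerate members of the pencil of scalar brackets together with their radicals --- so that any indecomposable ideal summand coincides with one of the $\n_{C, c}^L$. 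Combining this rigidity with the Galois descent used above then completes the proof.
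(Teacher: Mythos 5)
Your overall architecture matches the paper's: the easy direction (non-transitive $\Rightarrow$ decomposable, by splitting along a proper orbit and taking fixed points) is essentially the paper's argument and is fine. The problem is the ``alignment step'' in the converse, which you correctly identify as the crux but do not actually prove, and the rigidity statement you propose to establish is false as stated. It is \emph{not} true that every indecomposable ideal summand of $\n_{\G,c}^L$ coincides with one of the component subalgebras $\n_{C,c}^L$: already for $\mathfrak{h}_3 \oplus \mathfrak{h}_3$ (the graph of Example \ref{ex:twoCopiesHeisenberg}) with bases $\alpha_j,\beta_j,\gamma_j$, the subspace $\spn(\alpha_1 + \gamma_2,\, \beta_1,\, \gamma_1)$ is an indecomposable ideal complementary to the second copy, but it is not $\n_{C_1,c}^L$. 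Uniqueness of the decomposition into indecomposable ideals only holds \emph{modulo the centre} (this is Theorem \ref{thm:uniquenessDecompositionLieAlgebras}, from Fisher--Gray--Hydon: $\mathfrak{h}_i \subset \mathfrak{k}_i + Z(\g)$), so no canonical recovery of the $\n_{C,c}^L$ themselves from the bracket structure is possible; your sketch via ``the pencil of scalar brackets and their radicals'' (offered only for $c=2$, with general $c$ unaddressed) cannot deliver the exact coincidence you need.

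The paper closes this gap differently: it passes to the abelianization, where the central ambiguity can be killed. Concretely, Lemma \ref{lem:projCentreAb} shows $\pi_{\ab}(Z(\n_{\G,c}^K))$ is spanned by the degree-$0$ vertices; one first disposes of isolated vertices by observing that $\chi$ fixes singleton components by definition, so transitivity forces either no isolated vertices or $|S|=1$; in the remaining case the centre projects to zero in the abelianization, and Proposition \ref{prop:uniquenessDecompositionLieAlgGraphs} then yields that $\pi_{\ab}$ of any ideal decomposition is the span of a union of connected components. The $\Gal(L/K)$-equivariance is then read off on the abelianization via (\ref{eq:projRatFormOntoAbelianization}), forcing each $S_j$ to be $\chi\circ\rho$-invariant and contradicting transitivity. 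Your proposal would be correct if you replaced the (false) exact-coincidence claim by this ``equality after projecting to the abelianization'' statement and supplied the reduction eliminating degree-$0$ vertices; as written, the key lemma is both unproved and too strong to be provable.
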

	
	\noindent We start in section \ref{sec:formsNilpLieAlgGraphs} by recalling the results from \cite{dm21-1} that describe the automorphim group of 2-step nilpotent Lie algebras associated to graphs and show how this result can be extended to the general $c$-step nilpotent case. In order to prove Theorems \ref{thm:injectiveVersionClassificationRationalForms} and \ref{thm:ClassificationOfRealForms}, we introduce the basic notions of Galois cohomology in Section \ref{sec:GaloisCohomology}, including some preliminary results on computations for certain semi-direct products.  In section \ref{sec:classKformsL} the preceding results are combined to calculated the Galois cohomology of the automorphism group of $c$-step nilpotent Lie algebras associated to graphs. Using the principle of Galois descent, Theorems \ref{thm:injectiveVersionClassificationRationalForms} and \ref{thm:ClassificationOfRealForms} are proven in sections \ref{sec:rationalForms} and \ref{sec:realForms}, respectively. At last, we give some applications of our main results in section \ref{sec:indecomposableForms}, including the proof of Theorem \ref{thm:indecomposableForms}.
	
	\section{Lie algebras associated to graphs and their automorphisms}
	\label{sec:formsNilpLieAlgGraphs}
	
	In this section, all fields will be considered as subfields of $\C$. We start by recalling a construction which associates to a graph $\G$ a Lie algebra, and for every $c > 1$ a $c$-step nilpotent Lie algebra. These Lie algebras are also referred to as \textit{free ($c$-step nilpotent) partially commutative Lie algebras}. Let us start by defining what we mean by a graph and its automorphisms.
	
	\begin{definition}
		A \textit{simple undirected graph} is a pair $\G = (S,E)$ with $S$ a finite set and $E$ a subset of $\{ \{\alpha, \beta \} \subset S \mid \alpha \neq \beta \}$. A bijection $\theta:S \to S$ is called an automorphism of the graph $\G$ if for any edge $\{\alpha, \beta\} \in E$ we have $\{\theta(\alpha), \theta(\beta)\} \in E$. The set of all automorphisms of $\G$ forms a group under composition and is denoted by $\Aut(\G)$.
	\end{definition}
	
	Take any subfield $K \subset \C$ and let $\mathfrak{f}^K(S)$ denote the free Lie algebra on $S$ over the field $K$. We write $I_\G$ for the ideal of $\mathfrak{f}^K(S)$ generated by the set of brackets $\{ [\alpha, \beta] \mid \{\alpha, \beta\} \notin E \}$. We define the \textit{Lie algebra $\g_{\G}^K$ associated to the graph $\G$ over the field $K$} by the quotient
	\begin{equation}
		\label{eq:defintionInfLieAlgGraph}
		\g_{\G}^K := \mathfrak{f}^K(S)/I_\G.
	\end{equation}
	For any Lie algebra $\g$, we let $\gamma_{i}(\g)$ denote the $i$-th ideal of the lower central series of $\g$ which is defined inductively by $\gamma_1(\g) = \g$ and $\gamma_{i+1}(\g) = [\g, \gamma_{i}(\g)]$. We can make $\g_{\G}^K$ $c$-step nilpotent by taking the quotient by $\gamma_{c + 1}(\g_\G^K)$ obtaining the \textit{$c$-step nilpotent Lie algebra associated to the graph $\G$}:
	\[ \n_{\G,c}^K := \g_{\G}^K/\gamma_{c + 1}(\g_\G^K). \]
	The special case of $c=2$ was introduced in \cite{dm05-1} and has ever since provided a rich class of $2$-step nilpotent Lie algebras. Later these Lie algebras were considered for higher nilpotency class, for example in \cite{main06-1}.
	
	With abuse of notation we will write $V := \spn_K(S)$ for the vector space spanned by the vertices in any of the Lie algebras $\mathfrak{f}^K(S)$, $\g_{\G}^K$ and $\n^K_{\G,c}$. Recall that the free Lie algebra has a natural positive grading. If we define the vector subspaces $V^i$ inductively via $V^1 = V$ and $V^{i+1} = [V, V^{i}]$, this grading is given by
	\[\mathfrak{f}_c(S) = \bigoplus_{i = 1}^c V^i\]
	where $[V^i, V^j] = V^{i+j}$. If we write $W_\G = \spn(\{ [\alpha, \beta] \mid \alpha, \beta \in S, \{\alpha, \beta\} \notin E \}) \subset \mathfrak{f}^K(S)$, then the ideal $I_\G$ can be written as
	\[ I_\G = W_\G^1 + W_\G^2 + W_\G^3 + \ldots \]
	where $W_\G^i$ is defined inductively by $W_\G^1 = W_\G$ and $W_\G^{i + 1} = [V, W_\G^{i}]$. Note that we have $W_\G^{i} \subset V^{i+1}$ and thus we find that $\g_\G^K$ and $\n_{\G,c}^K$ have a positive grading given by
	\begin{equation}
	\label{eq:gradingPartComLieAlg}
	\g_{\G}^K = V \oplus \bigoplus_{i = 1}^\infty V^{i+1}/W_\G^{i}, \quad \quad \quad \n_{\G, c}^K = V \oplus \bigoplus_{i = 1}^{c-1} V^{i+1}/W_\G^{i}.
	\end{equation}
	
	\subsection{Graded automorphisms of $\g_{\G}^K$ and $\n_{\G, c}^K$}
	\label{sec:gradedAutomorphisms}
	
	Now that we know each of the Lie algebras $\mathfrak{f}^K(S)$, $\g_{\G}^K$ and $\n_{\G,c}^K$ has a positive grading, we can consider the automorphisms of these Lie algebras that preserve this grading, i.e. they map each summand of the grading onto itself. As it turns out, this condition is equivalent with the weaker assumption that the first summand $V$ is mapped onto itself. Let us write the subgroups of graded automorphisms of $\g_{\G}^K$ and $\n_{\G, c}^K$ as
	\begin{align*}
	T_{\G} &= \{ f \in \Aut(\g^K_{\G}) \mid f(V) = V \} \\
	T_{\G, c} &= \{ f \in \Aut(\n^K_{\G,c}) \mid f(V) = V \}.
	\end{align*}
	For any of the Lie algebras $\m = \mathfrak{f}^K(S), \, \g_{G}^K, \, \n_{\G,c}^K$, let $p$ denote the canonical projection morphism
	\begin{align}
	\label{eq:morphismp}
	p:\Aut(\m) \to \GL(V)
	\end{align}
	which takes an automorphism to its induced map on the abelianization $\m/[\m,\m]$ which can be identified with $V$ via the isomorphism $V \to \m/[\m,\m]: v \mapsto v + [\m, \m]$.
	
	\begin{prop}
		For any $c > 1$, the groups $p(T_{\G})$, $p(T_{\G, c})$, $p(\Aut(\g_{\G}^K))$ and $p(\Aut(\n_{\G, c}^K))$ are equal.
	\end{prop}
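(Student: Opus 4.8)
The plan is to prove the four groups equal in two stages. First I would show that for each Lie algebra $\m \in \{\g_\G^K, \n_{\G,c}^K\}$ one has $p(T) = p(\Aut(\m))$, i.e.\ every automorphism has the same image under $p$ as a \emph{graded} one; second, I would show that the resulting subgroup of $\GL(V)$ is the \emph{same} for the free and the truncated algebra, giving $p(T_\G) = p(T_{\G,c})$. Since $T_\G \subseteq \Aut(\g_\G^K)$ and $T_{\G,c} \subseteq \Aut(\n_{\G,c}^K)$, the inclusions $p(T_\G) \subseteq p(\Aut(\g_\G^K))$ and $p(T_{\G,c}) \subseteq p(\Aut(\n_{\G,c}^K))$ are immediate, so in each stage only one inclusion requires work.

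For the first stage I would use that the grading \eqref{eq:gradingPartComLieAlg} realizes the lower central series. Because $\g_\G^K$ is positively graded and generated in degree one by $V$, an induction on $i$ gives $\gamma_i(\g_\G^K) = \bigoplus_{j \geq i} \g_j$, where $\g_j$ denotes the degree-$j$ summand; hence $\gamma_i/\gamma_{i+1} \cong \g_i$ and the canonical maps identify $\g_\G^K$ with its own associated graded Lie algebra for the lower central series (and likewise for $\n_{\G,c}^K$). Any $f \in \Aut(\g_\G^K)$ preserves the characteristic ideals $\gamma_i$, so it induces a graded automorphism $\mathrm{gr}(f) = \bigoplus_i \bar f_i$ on the associated graded, where $\bar f_i$ is the map on $\gamma_i/\gamma_{i+1}$. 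Under the identification above $\mathrm{gr}(f)$ is a graded automorphism of $\g_\G^K$, so $\mathrm{gr}(f) \in T_\G$, and since $\bar f_1$ is exactly the induced map on $V = \g_1 = \g_\G^K/[\g_\G^K, \g_\G^K]$ we get $p(\mathrm{gr}(f)) = p(f)$. Thus $p(\Aut(\g_\G^K)) \subseteq p(T_\G)$, and the same argument gives $p(\Aut(\n_{\G,c}^K)) \subseteq p(T_{\G,c})$.

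For the second stage I would identify a graded automorphism with the linear data it induces on $V$. A graded automorphism $f$ is determined by its restriction $A = p(f) \in \GL(V)$, and conversely such an $A$, extended by the universal property to a graded endomorphism of the free Lie algebra $\mathfrak{f}^K(S)$, descends to $\g_\G^K$ precisely when it preserves the defining ideal $I_\G$. Since $I_\G$ is generated as an ideal by the degree-two space $W_\G$ and $A$ is a Lie map fixing $V$ setwise, the relations $W_\G^{i+1} = [V, W_\G^i]$ propagate the single condition $A(W_\G) \subseteq W_\G$ to $A(I_\G) \subseteq I_\G$; running the same argument for $A^{-1}$ shows that $A(W_\G) = W_\G$ characterizes $p(T_\G)$ as a subgroup of $\GL(V)$. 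This condition is visibly independent of $c$: the degree-two summand of $\n_{\G,c}^K$ equals that of $\g_\G^K$ whenever $c > 1$, and the extra relations in $\gamma_{c+1}$ live in degree $\geq c+1 \geq 3$, hence impose no constraint on $A$. Concretely, the projection $\g_\G^K \to \n_{\G,c}^K$ carries each $f \in T_\G$ to an element of $T_{\G,c}$ inducing the same map on $V$ (giving $p(T_\G) \subseteq p(T_{\G,c})$), while any $A \in p(T_{\G,c})$ satisfies $A(W_\G) = W_\G$ and so already defines an element of $T_\G$ (giving the reverse inclusion). Combining the two stages yields the four equalities.

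I expect the only delicate point to be the second stage: one must check carefully that preserving the degree-two space $W_\G$ is both necessary and sufficient for a linear map on $V$ to extend to a graded automorphism, and that this criterion is genuinely the same for $\g_\G^K$ and for every truncation $\n_{\G,c}^K$. The first-stage associated-graded construction is standard once one observes that the lower central series is characteristic and split by the grading \eqref{eq:gradingPartComLieAlg}.
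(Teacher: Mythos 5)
Your proof is correct, and it reorganizes the argument in a way that differs from the paper's. The paper first observes that the quotient map $\g_\G^K \to \n_{\G,c}^K$ yields $p(T_\G) \subseteq p(T_{\G,c}) \subseteq p(\Aut(\n_{\G,c}^K))$ and $p(T_\G) \subseteq p(\Aut(\g_\G^K)) \subseteq p(\Aut(\n_{\G,c}^K))$, so that everything reduces to the single inclusion $p(\Aut(\n_{\G,c}^K)) \subseteq p(T_\G)$; it then proves that inclusion in one pass by lifting $p(f)$ to a graded automorphism $g$ of $\mathfrak{f}^K(S)$ and comparing $\pi \circ g$ with $f \circ \pi$ modulo $\gamma_2$ and $\gamma_3$ to deduce $g(W_\G) = W_\G$, hence $g(I_\G) = I_\G$. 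Your version separates the two phenomena: the passage from arbitrary to graded automorphisms is handled once and for all by the associated-graded functor (using that the grading \eqref{eq:gradingPartComLieAlg} realizes the lower central series, so $\m \cong \mathrm{gr}(\m)$ and $p(\mathrm{gr}(f)) = p(f)$), and the comparison across $c$ and with $\g_\G^K$ is handled by the common characterization $p(T_\G) = p(T_{\G,c}) = \{A \in \GL(V) \mid \tilde A(W_\G) = W_\G\}$, where $\tilde A$ is the graded extension of $A$ to $\mathfrak{f}^K(S)$. The underlying computation is the same in both treatments --- preservation of the degree-two space $W_\G$ propagates to preservation of $I_\G$ via $W_\G^{i+1} = [V, W_\G^i]$, and the truncation ideal lives in degrees $\geq c+1 \geq 3$ so imposes nothing new --- but your first stage replaces the paper's hand computation modulo the lower central series by the standard and more conceptual $\mathrm{gr}$ construction, at the cost of having to justify the identification $\gamma_i = \bigoplus_{j \geq i} \g_j$ explicitly. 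The point you flag as delicate is indeed the one that needs care: for $A = p(f)$ with $f \in T_{\G,c}$, the identity $\pi \circ \tilde A = f \circ \pi$ (both being Lie morphisms agreeing on $S$) gives $\tilde A(W_\G) \subseteq \ker\pi \cap V^2 = W_\G$, and the same applied to $A^{-1}$ upgrades this to equality; your sketch covers this correctly.
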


	\begin{proof}
		Note that any automorphism of $\g_{G}^K$ must preserve $\gamma_{c + 1}$ and thus must induce an automorphism on $\n_{\G,c}^K$. This gives us a map $h:\Aut(\g_{G}^K) \to \Aut(\n_{\G, c}^K)$. As is immediate from the definition, $h$ maps $T_\G$ into $T_{\G,c}$. We then get a commutative diagram
		\[
		\begin{tikzcd}
		T_{\G} \arrow[d, "h|_{T_{\G}}"] \arrow[r, hookrightarrow] & \Aut(\g_{\G}^K) \arrow[d, "h"] \arrow[r, "p"] & \GL(V) \arrow[d, "\Id"] \\
		T_{\G,c} \arrow[r, hookrightarrow] & \Aut(\n_{\G,c}^K) \arrow[r, "p"] & \GL(V).
		\end{tikzcd}
		\]
		We thus get the inclusions $p(T_\G) \subset p(\Aut(\g_{\G}^K)) \subset p(\Aut(\n_{\G, c}^K))$ and $p(T_\G) \subset p(T_{\G,c}) \subset p(\Aut(\n_{\G, c}^K))$. It thus suffices to prove the inclusion $p(\Aut(\n_{\G, c}^K)) \subset T_{\G}$.
		
		Take an arbitrary automorphism $f \in\Aut(\n_{\G,c}^K)$. By the universal property of the free Lie algebra $\mathfrak{f}(S)$, there exists a unique automorphism $g \in p(T_\G)$ with $g|_V = p(f)$. If we let $\pi$ denote the quotient morphism $\pi:\mathfrak{f}^K(S) \to \n_{\G,c}^K$, then we get for any $v \in V \subset \mathfrak{f}^K(S)$ that $(\pi \circ g)(v) - (f \circ \pi)(v) \in \gamma_2(\n_{\G, c}^k)$. As a consequence we also get for any $v \in V^2 = [V,V]$ that $(\pi \circ g)(v) - (f \circ \pi)(v) \in \gamma_3(\n_{\G, c}^k)$. Thus if $w \in W_\G \subset V^2$, we get that $\pi(w) = 0$ and as a consequence that $\pi(g(w)) \in \gamma_3(\n_{\G, c}^k)$. But since $\pi$ maps $[V, V]$ into $[V,V]/W_\G$, we must have $\pi(g(w)) = 0$ and thus that $g(w) \in W_\G$. We conclude that $g(W_\G) \subset W_\G$. 
		
		We can follow the same argument for $f^{-1}$, giving a graded automorphism $\tilde{g} \in \Aut(\mathfrak{f}^K(S))$ with $\tilde{g}|_V = p(f^{-1})$ and $\tilde{g}(W_\G) \subset W_\G$. Since $\tilde{g}|_V = p(f^{-1}) = p(f)^{-1} = g|_V^{-1}$ and $g$ is a graded automorphism of $\mathfrak{f}^K(S)$, we find that $\tilde{g} = g^{-1}$. Similarly as before, we find that $g^{-1}(W_\G) \subset W_\G$, leading to the equality $g(W_\G) = W_\G$. Inductively we then have that
		\[g(W_\G^{i+1}) = g([V, W_\G^i]) = [g(V), g(W_\G^i)] = [V, W_\G^i] = W_\G^{i+1}.\]
		This proves that $g(I_\G) = I_\G$ and that $g$ induces a graded automorphism $\overline{g}: \g^K_{\G} \to \g^K_{\G}$. By construction $p(\overline{g}) = p(f)$. Since $f$ was chosen arbitrarily in $\Aut(\n^K_{\G,c})$, this shows that $p(\Aut(\n_{\G, c}^K)) \subset p(T_\G)$ which concludes the proof.
	\end{proof}
	
	As a consequence of the above lemma, we can define a subgroup $G \leq \GL(V)$ as
	\[ G := p(T_\G) = p(T_{\G,c}) = p(\Aut(\g_{\G}^K)) = p(\Aut(\n_{\G,c}^K)).\]
	Note that when we restrict $p$ to $T_\G$ or $T_{\G,c}$ we actually get isomorphisms of groups
	\begin{equation}
	\label{eq:definitionptilde}
	\tilde{p}:T_\G \stackrel{\cong}{\longrightarrow} G: f \mapsto p(f), \quad \quad \quad \tilde{p}:T_{\G,c} \stackrel{\cong}{\longrightarrow} G: f \mapsto p(f).
	\end{equation}
	In \cite{dm21-1} the group $G \leq \GL(V)$ was completely determined in terms of the graph $\G$ (where the proof was done for two-step Lie algebras). Let us first recall the necessary definitions in order to state this structure theorem for $G$.
	
	Let $\G = (S, E)$ be a simple undirected graph. For any vertex $\alpha \in S$, we define the \textit{open and closed neighbourhoods of $\alpha$} by
	\begin{equation}
		\label{eq:openAndClosedNeigh}
		\Omega'(\alpha) = \{ \beta \in S \mid \{ \alpha,\beta \} \in E  \} \quad \text{and} \quad \Omega(\alpha) = \Omega'(\alpha) \cup \{\alpha\},
	\end{equation}
	respectively. This allows us to define a relation $\prec$ on the vertices by $\alpha \prec \beta \Leftrightarrow \Omega'(\alpha) \subset \Omega(\beta)$. An equivalence relation $\sim$ is then defined as $\alpha \sim \beta \Leftrightarrow \alpha \prec \beta \wedge \alpha \succ \beta$. Note that $\alpha \sim \beta$ if and only if the transposition of $\alpha$ with $\beta$ defines an automorphism of $\G$. The equivalence classes of $\sim$ are called the \textit{coherent components of $\G$} and are denoted by $\Lambda := S/\sim$. For $\alpha,\beta \in S$, let $E_{\alpha \beta} \in \text{End}(V)$ denote the linear map which satisfies
	\begin{equation*}
	E_{\alpha \beta}(\gamma) = 
	\begin{cases}
	\alpha \quad \text{if } \gamma = \beta\\
	0 \quad \text{else}
	\end{cases}
	\end{equation*}
	for all $\gamma \in S$. Then define the subgroup $M \leq \GL(V)$ by
	\[ M := \left\langle I_V + tE_{\alpha \beta} \,\, \Big| \,\, t \in K, \, \alpha \prec \beta, \alpha \nsucc \beta \right\rangle,  \]
	where $I_V$ is the identity on $V$. For any set $X$, we denote with $\Perm(X)$ the group of permutations of $X$. We can then define a group morphism $P: \Perm(S) \to \GL(V):\theta \mapsto P_\theta$ where $P_\theta$ is defined by $P_\theta(\alpha) = \theta(\alpha)$ for any $\alpha \in S$. At last, for a coherent component $\lambda \in \Lambda$, we will write $V_\lambda$ for the vector subspace of $V$ spanned by the vertices in $\lambda$. We can then view $\GL(V_\lambda)$ as a subgroup of $\GL(V)$ by identifying a linear map $A\in \GL(V_\lambda)$ with the linear map in $\GL(V)$ which maps $\alpha$ to itself whenever $\alpha \in S \setminus \lambda$ and maps $\alpha$ to $A(\alpha)$ whenever $\alpha \in \lambda$. 
	
	\begin{theorem}[Deré, Mainkar \cite{dm21-1}]
		\label{thm:structureThmG}
		The group $G$ is a linear algebraic subgroup of $\GL(V)$ (with respect to the basis $S$) and is given by
		\[ G = M \cdot \left( \prod_{\lambda \in \Lambda} \GL(V_\lambda) \right) \cdot P(\Aut(\G))\]
		where $M$ is equal to the unipotent radical of $G$.
	\end{theorem}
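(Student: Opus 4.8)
The plan is to reduce the statement to a purely linear–algebraic description of $G$ as the stabilizer of a fixed subspace, and then to analyze that stabilizer combinatorially using the relation $\prec$. First I would use the preceding proposition to rewrite $G$. By that result $G = p(T_\G)$, and by the universal property of the free Lie algebra every $A \in \GL(V)$ extends uniquely to a graded endomorphism of $\mathfrak{f}^K(S)$; the induced map on the degree–two part $V^2 = [V,V]$, which for the free Lie algebra is canonically the exterior square $V \wedge V$, is then $A \wedge A$. The argument given in the proof of the proposition shows that such an $A$ descends to a graded automorphism of $\g_\G^K$ (equivalently of $\n_{\G,c}^K$) precisely when $(A \wedge A)(W_\G) = W_\G$, and that this single degree–two condition forces preservation of every $W_\G^i$. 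Hence
\[ G = \{ A \in \GL(V) \mid (A \wedge A)(W_\G) = W_\G \}. \]
In particular $G$ is independent of $c$ and equals the stabilizer of the subspace $W_\G \subseteq V \wedge V$; this is a Zariski–closed condition, which already yields that $G$ is a linear algebraic subgroup of $\GL(V)$ and reduces the whole statement to the case $c = 2$.

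Second, I would check the easy inclusion $\supseteq$ by verifying that each family of generators stabilizes $W_\G$. For $\theta \in \Aut(\G)$ the map $P_\theta$ permutes the set of non–edges, so $(P_\theta \wedge P_\theta)(W_\G) = W_\G$. For a coherent component $\lambda \in \Lambda$, any two vertices of $\lambda$ have identical adjacency outside $\lambda$ and their transposition is a graph automorphism, from which one deduces that $W_\G$ is stable under $\GL(V_\lambda)$. Finally, for $\alpha \prec \beta$ with $\alpha \nsucc \beta$ the inclusion $\Omega'(\alpha) \subseteq \Omega(\beta)$ guarantees that replacing $\beta$ by $\beta + t\alpha$ carries each non–edge bracket to a combination of non–edge brackets, so the transvection $I_V + tE_{\alpha \beta}$ lies in $G$. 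Together these show $M \cdot \left( \prod_{\lambda} \GL(V_\lambda) \right) \cdot P(\Aut(\G)) \subseteq G$.

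Third, and this is where the real work lies, I would prove the reverse inclusion by putting an arbitrary $A \in G$ into a normal form. The relation $\prec$ descends to a genuine partial order on the set of coherent components $\Lambda$, and the identification of $\overline{\G}$ with the quotient graph on $\Lambda$ lets one read off how $A$ must interact with this order. The strategy is: (i) show that the condition $(A \wedge A)(W_\G) = W_\G$ forces $A$, written in block form along $V = \bigoplus_{\lambda} V_\lambda$, to be block–triangular with respect to a linear extension of the partial order, up to a permutation of the components realized by an element of $P(\Aut(\G))$; (ii) multiply by that permutation to reduce to the case where $A$ preserves each component and is block–upper–triangular; (iii) strip off the block–diagonal part, which lies in $\prod_{\lambda} \GL(V_\lambda)$; and (iv) identify the remaining strictly block–upper–triangular unipotent factor with an element of $M$, using that the only admissible off–diagonal entries $E_{\alpha \beta}$ are exactly those with $\alpha \prec \beta$, $\alpha \nsucc \beta$.

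The main obstacle is steps (iii)–(iv): proving that the stabilizer condition permits no off–diagonal coupling other than through comparable pairs $\alpha \prec \beta$, and that every such transvection genuinely occurs, so that the unipotent part is captured exactly by $M$. Once this normal form is established, $M$ is a unipotent normal subgroup of $G$ with reductive quotient $\prod_{\lambda} \GL(V_\lambda)$ extended by the component permutations, so $M$ is the unipotent radical of $G$ by uniqueness of the Levi decomposition, which completes the proof.
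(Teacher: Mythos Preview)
The paper does not prove this theorem: it is stated as a result of Der\'e and Mainkar and attributed to the reference \cite{dm21-1}, with the remark that ``the proof was done for two-step Lie algebras''. There is therefore no proof in the present paper to compare your proposal against.

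That said, your overall strategy is sound and in fact matches what the paper says about the cited source. Your first reduction---rewriting $G$ as the stabilizer in $\GL(V)$ of the subspace $W_\G \subseteq V \wedge V$, using the identification of the degree-two part of the free Lie algebra with $V \wedge V$---is correct and immediately gives that $G$ is Zariski closed and independent of $c$; this is exactly the reduction to $c=2$ the paper alludes to. Your verification of the inclusion $\supseteq$ is also correct: graph automorphisms permute non-edges, $\GL(V_\lambda)$ preserves $W_\G$ because vertices in a coherent component have identical adjacencies outside $\lambda$ and are uniformly adjacent or non-adjacent inside $\lambda$, and your argument for the transvections $I_V + tE_{\alpha\beta}$ with $\alpha \prec \beta$ is right.

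Where your proposal is genuinely incomplete is the reverse inclusion. Your steps (i)--(iv) are a reasonable outline, but the substance of the theorem lies precisely in showing that an arbitrary $A$ stabilizing $W_\G$ has no off-diagonal contribution $E_{\alpha\beta}$ unless $\alpha \prec \beta$, and that the induced permutation of components is realized by a genuine graph automorphism. You flag this as ``the main obstacle'' but do not carry it out; in particular, step (i) requires a careful argument that the support pattern of $A$ respects the partial order on $\Lambda$ up to an element of $\Aut(\G)$, and step (iv) requires knowing that $M$ is exactly the group of block-unipotent matrices with nonzero off-diagonal blocks only along comparable pairs, not merely that it is contained in that group. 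These are the combinatorial arguments that the cited paper \cite{dm21-1} supplies, and without them the proposal remains a plan rather than a proof.
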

	
	\subsection{The quotient graph and a splitting morphism $r:\Aut(\overline{\G}) \to \Aut(\G)$}
	\label{sec:quotientGraph}
	
	The concept of a quotient graph on the coherent components was introduced in \cite{maplsa18-1} and is based on the following observation, also mentioned in \cite{dm05-1}.
	
	\begin{lemma}
		Let $\G = (S, E)$ be a graph and $\lambda, \mu \in \Lambda$ coherent components. If there exist $\alpha \in \lambda$ and $\beta \in \mu$ such that $\{\alpha, \beta\} \in E$, then it holds that for any $\alpha' \in \lambda$ and $\beta' \in \mu$, $\{\alpha', \beta'\} \in E$. This holds in particular when $\lambda = \mu$.
	\end{lemma}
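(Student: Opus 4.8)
The plan is to reduce everything to a single elementary \emph{swap} step: if $\{x,y\} \in E$ and $y \sim y'$ with $y' \neq x$, then $\{x,y'\} \in E$. Granting this, the lemma is obtained by first replacing $\alpha$ by $\alpha'$ and then $\beta$ by $\beta'$, one vertex at a time, using that $\alpha \sim \alpha'$ (both lie in $\lambda$) and $\beta \sim \beta'$ (both lie in $\mu$).

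First I would prove the swap step. By definition $y \sim y'$ gives in particular $y \prec y'$, i.e.\ $\Omega'(y) \subset \Omega(y')$. Since $\{x,y\} \in E$ means $x \in \Omega'(y)$, this yields $x \in \Omega(y') = \Omega'(y') \cup \{y'\}$, and the hypothesis $y' \neq x$ then forces $x \in \Omega'(y')$, that is $\{x,y'\} \in E$. This is the only point at which the definitions of $\prec$ and $\sim$ are used, and it is a one-line verification.

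Next I would assemble the statement. Assume $\{\alpha,\beta\} \in E$ with $\alpha \in \lambda$, $\beta \in \mu$, and fix $\alpha' \in \lambda$, $\beta' \in \mu$ with $\alpha' \neq \beta'$ (this restriction is automatic when $\lambda \neq \mu$, and is exactly what is needed for $\{\alpha',\beta'\}$ to be a candidate edge when $\lambda = \mu$). In the generic situation $\alpha' \neq \beta$, I would apply the swap step to the edge $\{\beta,\alpha\}$, replacing $\alpha$ by $\alpha'$ via $\alpha \sim \alpha'$, to obtain $\{\beta,\alpha'\} \in E$; applying it once more to $\{\alpha',\beta\}$, replacing $\beta$ by $\beta'$ via $\beta \sim \beta'$ (legitimate since $\beta' \neq \alpha'$), gives $\{\alpha',\beta'\} \in E$.

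The only genuine subtlety, and the step I expect to be the main obstacle, is tracking the coincidences among the four vertices in the diagonal case $\lambda = \mu$, where the hypothesis $y' \neq x$ of the swap step can fail. Concretely the first replacement is invalid exactly when $\alpha' = \beta$; in that case I would argue directly from $\{\beta,\alpha\} \in E$, replacing $\alpha$ by $\beta'$ via $\alpha \sim \beta'$ (all four vertices now lying in the single component $\lambda$) and using $\beta' \neq \beta$, which holds because $\beta' \neq \alpha' = \beta$. This produces $\{\beta,\beta'\} = \{\alpha',\beta'\} \in E$. Verifying that these two subcases are exhaustive and that the required inequalities always hold is the only part needing care; the rest is formal.
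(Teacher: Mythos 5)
Your proof is correct and complete. The paper itself gives no proof of this lemma (it is stated as an observation imported from \cite{dm05-1} and \cite{maplsa18-1}), so there is no argument to compare against; your swap step, extracting $x \in \Omega'(y) \subset \Omega(y') = \Omega'(y') \cup \{y'\}$ from $y \prec y'$ and then using $y' \neq x$ to land in $\Omega'(y')$, is exactly the mechanism the definition of $\prec$ is designed to provide. Your attention to the coincidence $\alpha' = \beta$ in the diagonal case $\lambda = \mu$ is warranted rather than pedantic: the generic two-step replacement genuinely breaks there, and your fallback (replacing $\alpha$ directly by $\beta'$, using $\beta' \neq \alpha' = \beta$) correctly covers it, so the two subcases are exhaustive.
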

	
	We can thus define a new graph which we will call the quotient graph, which has as its vertices the coherent components of the old graph. First let us define the category where this new object lies in.
	
	\begin{definition}
		A \textit{simple undirected vertex-weighted graph with loops} is a triple $(S,E,\Phi)$ where $S$ is a set, $E$ is a subset of $\{ \{s,t\} \mid s,t \in S \}$ and $\Phi:S \to \R$ is a map to $\R$. Its automorphism group is defined as
		\[ \Aut(S,E,\Phi) = \{ \varphi \in \text{Perm}(S) \mid e \in E \Leftrightarrow \varphi(e) \in E,\,\, \Phi \circ \varphi = \Phi \}. \]
	\end{definition}
	
	\noindent Let $\G = (S, E)$ be a simple undirected graph and let $\Lambda$ denote its set of coherent components, then associated to $\G$ is a simple undirected vertex-weighted graph with loops $\overline{\G}$ which we will call the \textit{quotient graph of $\G$}. It is defined as $\overline{\G} = (\Lambda, \overline{E}, \Phi)$ with \[ \overline{E} := \{ \{ \lambda, \mu \} \mid \exists \alpha \in \lambda, \exists \beta \in \mu: \{\alpha,\beta\} \in E \} \]
	and $\Phi: \Lambda \to \R: \lambda \mapsto |\lambda|$.
	\begin{example}
		We will make a visual representation of the quotient graph by simply putting the values of the weights near every vertex and drawing a loop at those $\lambda$ for which $\{\lambda\} \in \overline{E}$. Below we have drawn a concrete example.
		
		\begin{figure}[H]
			\centering
			\begin{tikzpicture}[every loop/.style={}]
			\draw (0, 0.5774) -- (1,0);
			\draw (0,-0.5774) -- (1,0);
			\draw (0, 0.5774) -- (1/3,0);
			\draw (0,-0.5774) -- (1/3,0);
			\draw (1/3,0) -- (1,0);
			\draw (0, 0.5774) -- (0,-0.5774);
			\draw (2, 0.5774) -- (1,0);
			\draw (2,-0.5774) -- (1,0);
			\filldraw [black] (1,0) circle (2pt);
			\filldraw [black] (0,0.5774) circle (2pt);
			\filldraw [black] (0,-0.5774) circle (2pt);
			\filldraw [black] (1/3,0) circle (2pt);
			\filldraw [black] (2,0.5774) circle (2pt);
			\filldraw [black] (2,-0.5774) circle (2pt);
			
			\node at (1, 1.2) {$\G$};
			
			\draw (5,0) -- (6,0);
			\draw (6,0) -- (7,0);
			\filldraw [black] (6,0) circle (2pt) node[inner sep=5pt, anchor = north] {1};
			\filldraw [black] (7,0) circle (2pt) node[inner sep=5pt, anchor = north] {2};
			\filldraw [black] (5,0) circle (2pt) node[inner sep=5pt, anchor = north] {3};
			\draw[cm={1.5 ,0 ,0 ,1.5 ,(5,0)}] (0,0)  to[in=50,out=130, loop] (0,0);
			
			\node at (6, 1.2) {$\overline{\G}$};
			
			\draw [black, dashed] (1,0) circle (0.3);
			\draw [black, dashed, cm={0.5 ,0 ,0 ,1 ,(0,0)}] (1/6,0) circle (1);
			\draw [black, dashed, cm={0.5 ,0 ,0 ,1 ,(2,0)}] (0,0) circle (1);
			
			\end{tikzpicture}.
		\end{figure}
	\end{example}
	
	Since any graph automorphism $\theta \in \Aut(\G)$ preserves the relation $\prec$, it also preserves the equivalence relation $\sim$. Therefore $\theta$ induces a permutation $\overline{\theta}$ on the set of coherent components $\Lambda$. In fact the induced permutation $\overline{\theta}$ is an automorphism of the quotient graph $\overline{\G}$. By consequence we get a morphism $\Aut(\G) \to \Aut(\overline{\G}): \theta \mapsto \overline{\theta}$ which fits in the short exact sequence:
	\begin{equation}
	\label{eq:splitSESgraphpermutations}
	\begin{tikzcd}
	1 \arrow[r] &\prod_{\lambda \in \Lambda} \text{Perm}(\lambda) \arrow[r, hookrightarrow] &\Aut(\G) \arrow[r, shift left] &\Aut(\overline{\G}) \arrow[l, shift left, dashed, "\exists r"] \arrow[r] &1
	\end{tikzcd}
	\end{equation}
	where $\Perm(X)$ denotes the group of permutations on the set $X$. In fact, this sequence is right-split, i.e. there exists a morphism $r:\Aut(\overline{\G}) \to \Aut(\G)$ such that $\overline{r(\varphi)} = \varphi$ for any $\varphi \in \Aut(\G)$. Such a morphism $r$ is not unique, but let us show how to construct one. First choose an ordering of the vertices inside each coherent component $\lambda = \left\{ \alpha_{\lambda,1}, \alpha_{\lambda,2}, \ldots, \alpha_{\lambda,\Phi(\lambda)} \right\}$. Then for $\varphi \in \Aut(\overline{\G})$, define $r(\varphi) \in \Aut(\G)$ by $r(\varphi)(\alpha_{\lambda,i}) = \alpha_{\varphi(\lambda), i}$ for all $\lambda \in \Lambda$ and $1 \leq i \leq \Phi(\lambda)$. One can check that $r$ is well-defined and satisfies $\overline{r(\varphi)} = \varphi$ for any $\varphi \in \Aut(\overline{\G})$. Moreover, if $\varphi \in \Aut(\overline{\G})$ has a fixed point $\varphi(\lambda) = \lambda$, then $r(\varphi)|_\lambda = \Id_\lambda$. This is not necessarily true for any choice of $r$ that makes the short exact sequence right-split, but is true for the one we constructed above. Let us for the remainder of this paper always fix such a morphism $r$ which does enjoy this additional property.
	
	Note that this morphism $r$ allows us to write $\Aut(\G)$ as a semi direct product
	\begin{equation} 
	\label{eq:DecompositionAutGraph}
	\Aut(\G) \cong \left(\prod_{\lambda \in \Lambda} \Perm(\lambda) \right) \rtimes_r \Aut(\overline{\G}).
	\end{equation}
	Analogous to the morphism $P:\Aut(\G) \to G$ which maps an automorphism of $\G$ to its corresponding permutation matrix in the basis of vertices, we can now also define a morphism $\overline{P}: \Aut(\overline{\G}) \to G$ by the commutative diagram:
	\begin{equation}
	\begin{tikzcd}
	\Aut(\G) \arrow[r, "P"] & G\\
	\Aut(\overline{\G}) \arrow[u, "r"'] \arrow[ru, "\overline{P}"']. &
	\end{tikzcd}
	\end{equation}
	Using the above we find that
	\[P(\Aut(\G)) = P\left( \left(\prod_{\lambda \in \Lambda} \Perm(\lambda)\right) \cdot  r(\Aut(\overline{\G}))  \right) = \left(\prod_{\lambda \in \Lambda} P(\Perm(\lambda))\right) \cdot \overline{P}(\Aut(\overline{\G})). \]
	Together with Theorem \ref{thm:structureThmG} we thus find that
	\[ G = M \cdot \left( \prod_{\lambda \in \Lambda} \GL(V_\lambda) \right) \cdot \overline{P}(\Aut(\overline{\G})) \]
	where the subgroups $M$, $\left( \prod_{\lambda \in \Lambda} \GL(V_\lambda) \right)$ and $\overline{P}(\Aut(\overline{\G}))$ have pairwise trivial intersections by \cite[Lemma 3.10.]{dm21-1}. Moreover, we have that $\overline{P}(\Aut(\overline{\G}))$ normalizes both $M$ and $\prod_{\lambda \in \Lambda} \GL(V_\lambda)$ and that $\prod_{\lambda \in \Lambda} \GL(V_\lambda)$ normalizes $M$. We thus have a semi-direct product decomposition of $G$:
	\[ G \cong \left( M \rtimes \left(\prod_{\lambda \in \Lambda} \GL(V_\lambda) \right) \right) \rtimes \Aut(\overline{\G}) . \]
	Note that $G^0 := M \cdot \left( \prod_{\lambda \in \Lambda} \GL(V_\lambda) \right)$ is the Zariski-connected component of the identity in $G$ and that by taking the quotient by it, we get a morphism
	\begin{equation}
	\label{eq:definitionq}
	q: G \to G/G^0 \cong \Aut(\overline{\G}).
	\end{equation}
	
	\section{Galois cohomology}
	\label{sec:GaloisCohomology}
	
	In this section we introduce the tools from Galois cohomology that we will use throughout this paper. We first give an overview of the main definitions in relation to the study of $K$-forms of Lie algebras. Next, we present a technical result on the Galois cohomology of a class of semi-direct products, including wreath products, which will be fruitful for the computations in the next section. This is the only section in the paper where we do not require the fields to be subfields of $\C$ in general.
	
	\subsection{Definitions and relation to $K$-forms}
	\label{sec:GaloisCohomologyDefinitions}
	
	First we recall some notions from Galois theory of possibly infinite degree field extensions. If $L/K$ is an extension of fields i.e.~$K \subset L$, then we denote with $\Aut(L/K)$ the group of field automorphisms of $L$ that fix every element of $K$. For any subgroup $H \leq \Aut(L/K)$, we write $L^H$ for the field fixed by $H$, i.e. $L^H = \{ l \in L \mid \forall \sigma \in H: \sigma(l) = l  \}$.
	
	\begin{definition}
		A field extension $L/K$ is called \textit{Galois} if it is algebraic over $K$ and if $L^{\Aut(L/K)} = K$. In this case we write $\Gal(L/K)$ for $\Aut(L/K)$.
	\end{definition}
	
	Note that the field extension in the definition above is not required to be of finite degree. We can put a topology on $\Gal(L/K)$ turning it into a topological group and more specific into a \textit{profinite group}. This topology is called the \textit{Krull topology} and a basis of opens for it is given by
	\[ \left\{  \sigma \Gal(L/N) \,\, \Big| \,\, \sigma \in \Gal(L/K) \text{ and } N/K \text{ is an intermediate field extension of finite degree} \right\}. \]
	In the special case when $L/K$ is a finite degree extension, the topology on $\Gal(L/K)$ is just the discrete topology. We can now formulate the Galois correspondence for field extensions of infinite degree.
	
	\begin{theorem}[Galois correspondence]
		\label{thm:GaloisCorrespondence}
		Let $L/K$ be a Galois extension. The assignment $H \mapsto L^H$ gives a bijection between
		\begin{enumerate}[label = (\roman*)]
			\item the closed subgroups of $\Gal(L/K)$ and the intermediate field extensions of $L/K$.
			\item the open subgroups of $\Gal(L/K)$ and the finite degree intermediate extensions of $L/K$.
			\item the normal open subgroups of $\Gal(L/K)$ and the finite degree intermediate Galois extensions of $L/K$, in which case we have an isomorphism $\Gal(L/K)/H \to \Gal(L^H/K): \sigma H \mapsto \sigma|_{L^H}$.
		\end{enumerate}
	\end{theorem}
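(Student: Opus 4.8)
The plan is to reduce everything to the classical finite Galois correspondence, which I take as known, by exploiting two facts: a Galois extension is the union of its finite Galois subextensions, and the Krull topology realizes $\Gal(L/K)$ as the inverse limit of the finite quotients $\Gal(N/K)$. Concretely, I would first set up the two candidate assignments $H \mapsto L^H$ and $F \mapsto \Gal(L/F)$ and show they are mutually inverse on the relevant classes. The easy half is that for an intermediate field $K \subseteq F \subseteq L$ the extension $L/F$ is again Galois: it is algebraic, and since $L/K$ is normal and separable these properties descend to $L/F$. Hence by the very definition of a Galois extension $L^{\Gal(L/F)} = F$, which already shows $F \mapsto \Gal(L/F)$ is injective and that the fixed-field map hits every intermediate field.

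Next I would record the profinite structure. Writing $L = \bigcup_N N$, where $N$ ranges over the finite Galois subextensions $N/K$ inside $L$ (each $\alpha \in L$ lies in such an $N$, namely the splitting field in $L$ of its minimal polynomial, which is finite, normal and separable), the subgroups $\Gal(L/N)$ form exactly the declared neighbourhood basis of the identity. The restriction maps $\Gal(L/K) \to \Gal(N/K)$ are then continuous surjections exhibiting $\Gal(L/K) \cong \varprojlim_N \Gal(N/K)$, so that a subset of $\Gal(L/K)$ is closed precisely when it is an intersection of preimages of subsets of the finite quotients.

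The hard direction, and the heart of the argument, is showing $\Gal(L/L^H) = H$ for a \emph{closed} subgroup $H$. The inclusion $H \subseteq \Gal(L/L^H)$ is immediate, so fix $\sigma \in \Gal(L/L^H)$; I would prove that every basic neighbourhood $\sigma\,\Gal(L/N)$ meets $H$, whence $\sigma \in \overline{H} = H$. For a finite Galois $N/K$, let $\overline{H}$ be the image of $H$ under restriction to $N$. The finite Galois correspondence applied to $N/K$ gives $\Gal(N/N^{\overline{H}}) = \overline{H}$, and one checks $N^{\overline{H}} = N \cap L^H$. Since $\sigma$ fixes $L^H$, its restriction $\sigma|_N$ fixes $N \cap L^H = N^{\overline{H}}$, so $\sigma|_N \in \overline{H}$; choosing $h \in H$ with $h|_N = \sigma|_N$ yields $h^{-1}\sigma \in \Gal(L/N)$, i.e. $\sigma\,\Gal(L/N) \cap H \neq \varnothing$. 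This is the only step that genuinely transcends the finite theory, and it is exactly where closedness of $H$ is indispensable: one approximates $\sigma$ by elements of $H$ on every finite layer and passes to the closure.

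Finally I would dispatch parts (ii) and (iii) as bookkeeping. For (ii), open subgroups are automatically closed (their complement is a union of cosets); if $H$ is open it contains some $\Gal(L/N)$, so $L^H \subseteq N$ is finite over $K$, while conversely a finite $L^H/K$ sits inside a finite Galois $N/K$, giving $\Gal(L/N) \subseteq \Gal(L/L^H) = H$ and exhibiting $H$ as a union of cosets of an open subgroup, hence open. For (iii), restriction to $L^H$ defines $\Gal(L/K) \to \Gal(L^H/K)$, which is surjective because every $K$-automorphism of the algebraic extension $L^H/K$ extends to $L$ by normality of $L/K$, and has kernel exactly $\Gal(L/L^H) = H$ by the hard direction. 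Here $H$ is normal if and only if $L^H$ is stable under all of $\Gal(L/K)$, i.e.\ $L^H/K$ is normal and thus Galois, in which case the induced map is the asserted isomorphism $\Gal(L/K)/H \cong \Gal(L^H/K)$. I expect no further difficulty once the closed-subgroup statement is in hand.
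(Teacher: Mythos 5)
The paper states this theorem as a classical result (Krull's infinite Galois correspondence) and gives no proof at all, so there is no internal argument to compare yours against. Your proposal is the standard textbook proof and is correct: the reduction to the finite correspondence via the inverse-limit description of the Krull topology, the approximation argument showing $\Gal(L/L^{H})=H$ for closed $H$ by meeting every basic neighbourhood $\sigma\,\Gal(L/N)$, and the bookkeeping for (ii) and (iii) are all sound. Two small presentational points: you use $\overline{H}$ both for the topological closure of $H$ and for the image of $H$ under restriction to $N$ in the same paragraph, which should be disambiguated; and in part (iii) the restriction map $\Gal(L/K)\to\Gal(L^{H}/K)$ is only well defined once you know $L^{H}$ is stable under $\Gal(L/K)$ (equivalently, $H$ is normal), a fact you do establish but which should logically precede the introduction of that map. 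Neither affects the validity of the argument.
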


	Now let $G$ be a group and $L/K$ a Galois extension. Whenever we have a group morphism $\phi:\Gal(L/K) \to G$ we will say it is \textit{continuous} if it is so for the Krull topology on $\Gal(L/K)$ and the discrete topology on $G$. Note that this implies that for any subgroup $H \leq G$, the inverse image $\phi^{-1}(H)$ is an open subgroup of $\Gal(L/K)$ and thus by the Galois correspondence fixes a finite degree intermediate extension of $L/K$.\\
	
	Next, we give an overview of the notions of Galois cohomology that will be used throughout this paper. In general, given a Galois extension of fields $L/K$ and an object $X$ defined over $L$, Galois cohomology can be used to classify the objects defined over $K$ which become isomorphic to $X$ when extended to $L$. This method is also known as \textit{Galois descent}. The objects in question can be many structures among which Lie algebras. For the purpose of this paper, we will define everything for Lie algebras, but for a general discussion, we refer to the books \cite{berh10-1} and \cite{serr02-1}. 
	
	We will always assume the fields to be of characteristic zero and the Lie algebras of finite dimension. We do allow field extensions to be of infinite degree. Recall that given a field extension $L/K$ and a Lie algebra $\n^K$ defined over the field $K$, the tensor product of $K$-vector spaces $\n^L := \n^K \otimes_K L$ has a natural Lie algebra structure (over the field $L$) defined by $$[v \otimes l, v' \otimes l'] = [v, v'] \otimes ll'$$ for any $v,v' \in \n^K$ and $l,l' \in L$.
	
	\begin{definition}
		Let $L/K$ be a field extension and $\n^L$ a Lie algebra defined over the field $L$. Then we call a Lie algebra $\m^K$ defined over the field $K$ a \textit{$K$-form} of $\n^L$ if the Lie algebra $\m^L = \m^K \otimes L$ is isomorphic to $\n^L$. Two $K$-forms of $\n^L$ are called equivalent if they are isomorphic over $K$. We write $\mathcal{F}_{K}(\n^L)$ for the set of equivalence classes of $K$-forms of $\n^L$.
	\end{definition} 
	
	Let $L/K$ be a Galois extension and $\n^K$ a Lie algebra defined over the ground field $K$. Note that we have a natural inclusion $\n^K \hookrightarrow \n^L: v \mapsto v \otimes 1$ and an action of the Galois group $\Gal(L/K)$ on $\n^L$, which fixes $\n^K$ seen as a subset of $\n^L$. This action is for any $\sigma \in \Gal(L/K)$, $v \in \n^K$ and $l \in L$ defined by $^\sigma (v \otimes l) := v \otimes \sigma(l)$ and by extending this additively to any element of $\n^L$. As one can check, this action satisfies for all $v,w \in \n^L$, $l \in L$ and $\sigma \in \Gal(K/L)$:
	\begin{enumerate}[label = (\roman*)]
		\item \label{item:prop1} $\prescript{\sigma}{}{(l v)} = \sigma(l)  \prescript{\sigma}{}{v}$,
		\item \label{item:prop2} $\prescript{\sigma}{}{(v + w)} = \prescript{\sigma}{}{v} + \prescript{\sigma}{}{w}$,
		\item \label{item:prop3} $\prescript{\sigma}{}{[v,w]} = [\prescript{\sigma}{}{v}, \prescript{\sigma}{}{w}]$.
	\end{enumerate}
	Conditions \ref{item:prop1} and \ref{item:prop2} tell us that the map $v \mapsto \asi{v}$ is a so-called \textit{semi-linear map} on $\n^L$. We can thus say that $\Gal(L/K)$ has an action on $\n^L$ by semi-linear maps. 
	
	If $\n^K$ and $\mathfrak{m}^K$ are two Lie algebras defined over the field $K$, we denote the set of Lie algebra homomorphisms (over $L$) from $\n^L = \n^K \otimes_K L$ to $\mathfrak{m}^L = \mathfrak{m}^K \otimes_K L$ by $\Hom(\n^L, \m^L)$. The actions of $\Gal(L/K)$ on $\n^L$ and $\mathfrak{m}^L$ as described above now also induce an action of $\Gal(L/K)$ on $\Hom(\n^L, \m^L)$ by setting for any $\varphi \in \Hom(\n^L, \m^L), \sigma  \in \Gal(L/K)$ and $v \in \n^L$:
	\[ (\prescript{\sigma}{}{\varphi}) (v) := \prescript{\sigma}{}{\left(\varphi\left(\prescript{\sigma^{-1}}{}{v}\right)\right)}. \]
	From the properties \ref{item:prop1}, \ref{item:prop2} and \ref{item:prop3} above it follows that $\prescript{\sigma}{}{\varphi}$ is indeed again a Lie algebra homomorphism. Note that if $\mathfrak{p}^K$ is a third Lie algebra over $K$, the equality $\asi{(\phi \varphi)} = \asi{\phi} \asi{\varphi}$ holds for any $\varphi \in \Hom(\n^L, \m^L)$ and $\phi \in \Hom(\m^L, \mathfrak{p}^L)$. In particular, the action of $\Gal(L/K)$ on the invariant subset $\Aut(\n^L) \subset \Hom(\n^L, \n^L)$ is one by group automorphisms, where the group operation on $\Aut(\n^L)$ is composition. In general, when $\Gal(L/K)$ has an action on a group by group automorphisms, we will call this group a \textit{$\Gal(L/K)$-group}.
	
	\begin{definition}
		Let $L/K$ be a Galois extension and $G$ a $\Gal(L/K)$-group. A continuous map $$\rho: \Gal(L/K) \to G,$$ where we write $\rho_\sigma = \rho(\sigma)$ for simplicity, is called a \textit{cocycle} if it satisfies the relation $\rho_{\sigma \tau} = \rho_\sigma \prescript{\sigma}{}{ \rho_\tau}$ for all $\sigma, \tau \in \Gal(L/K)$. The set of cocycles is denoted with $Z^1(L/K, G)$. Two cocycles $\rho, \eta \in Z^1(L/K, G)$ are said to be \textit{equivalent} if there exists a $g \in G$ such that $g \rho_\sigma \asi{g}^{-1} = \eta_\sigma$ for all $\sigma \in \Gal(L/K)$. The set of equivalence classes of cocycles is denoted with $H^1(L/K, G)$ and is called the \textit{first Galois cohomology set}.
	\end{definition}

	\begin{remark}
		The first Galois cohomology set is never empty, as we always have the equivalence class of the \textit{trivial cocycle} $\Gal(L/K) \to G:\sigma \mapsto e_G$. In fact this turns $H^1(L/K, G)$ into a pointed set, with the class of the trivial cocycle being the distinguished element. We will call $H^1(L/K,G)$ trivial if it only consists of this one element.
	\end{remark}

	\noindent Note that, by our previous discussion, if we start with a Lie algebra $\n^K$ defined over $K$, we have a natural action of $\Gal(L/K)$ on $\Aut(\n^L)$ by group automorphisms. Therefore we can talk about the associated first Galois cohomology set $H^1(L/K, \Aut(\n^L))$. Now we discuss the connection between the first Galois cohomology set and the $K$-forms of $\n^L$.
	
	We can associate to each cocycle $\rho:\Gal(L/K) \to \Aut(\n^L)$ a $K$-form $\n^K_\rho \subset \n^L$ by defining
	\begin{equation}
		\label{eq:formByFixedVectors}
		 \n^K_\rho := \{ v \in \n^L \mid \forall \sigma \in \Gal(L/K): \rho_\sigma(\asi{v}) = v \}.
	\end{equation}
	From properties \ref{item:prop1}, \ref{item:prop2}, \ref{item:prop3} and the fact that each $\rho_\sigma$ is an automorphism of $\n^L$, it follows that $\n^K_\rho$ is closed under $K$-scalar multiplication, addition and taking the Lie bracket. Therefore we have that $\n_\rho^K$ is indeed a Lie algebra over $K$. We still need to check that $\n_\rho^K \otimes L \cong \n^L$. For this consider the map $\n_\rho^K \otimes L \to \n^L: v \otimes l \mapsto lv$. It is a standard result that this map is an $L$-vector space isomorphism as proven for example in \cite[Lemma III.8.21.]{berh10-1}. It is then also straightforward to check this map preserves the Lie bracket. As it turns out, the $K$-Lie algebras constructed in this way are all the possible $K$-forms of $\n^L$ up to $K$-isomorphism.
	
	\begin{theorem}[Galois descent for Lie algebras]
		\label{thm:GaloisDescentLieAlgebras}
		Let $L/K$ be a Galois extension and $\n^K$ a Lie algebra defined over $K$. The map
		\[ H^1\left(L/K, \Aut\left(\n^L\right)\right) \to \mathcal{F}_{K}\left(\n^L\right): [\rho] \mapsto \left[\n^K_\rho\right] \]
		is a bijection, which sends the trivial cocycle to $[\n^K]$.
	\end{theorem}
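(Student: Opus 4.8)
The plan is to establish that the assignment $[\rho] \mapsto [\n^K_\rho]$ is well-defined, injective and surjective in turn; since the trivial cocycle plainly gives $\n^K_\rho = \n^K$, this yields the theorem. Almost everything will be driven by two elementary identities. First, for $g \in \Aut(\n^L)$ and $v \in \n^L$ one has $\asi{(g(v))} = \asi{g}(\asi{v})$, immediate from the definition of the action on $\Hom(\n^L,\n^L)$ upon substituting $\prescript{\sigma^{-1}}{}{(\asi{v})} = v$. Second, the action on homomorphisms is a genuine \emph{left} action, $\asi{(\ata{\varphi})} = \prescript{\sigma\tau}{}{\varphi}$, obtained by unwinding the definition and using that $v \mapsto \asi{v}$ is itself a left action on $\n^L$. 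The only external input is the cited fact that the canonical map $\n^K_\rho \otimes_K L \to \n^L$ is an $L$-isomorphism, i.e.\ that the fixed vectors of the twisted action $v \mapsto \rho_\sigma(\asi{v})$ span $\n^L$.

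\textbf{Well-definedness and injectivity.} Given equivalent cocycles with $g\,\rho_\sigma\,\asi{g}^{-1} = \eta_\sigma$, I claim $g$ restricts to a $K$-isomorphism $\n^K_\rho \to \n^K_\eta$: for $v \in \n^K_\rho$ the first identity gives $\eta_\sigma(\asi{(g(v))}) = \eta_\sigma(\asi{g}(\asi{v})) = g\bigl(\rho_\sigma(\asi{v})\bigr) = g(v)$, so $g(v) \in \n^K_\eta$, and the symmetric computation for $g^{-1}$ (which conjugates $\eta$ back to $\rho$) gives $g(\n^K_\rho) = \n^K_\eta$; being $L$-linear and a Lie homomorphism, $g$ restricts to a $K$-isomorphism. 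Conversely, for injectivity suppose $h\colon \n^K_\rho \to \n^K_\eta$ is a $K$-isomorphism. Extending $L$-linearly and composing with the canonical isomorphisms $\n^K_\rho \otimes_K L \cong \n^L \cong \n^K_\eta \otimes_K L$ yields $g \in \Aut(\n^L)$ with $g(\ell v) = \ell\,h(v)$ for $v \in \n^K_\rho$, $\ell \in L$. Since $h$ sends $\rho$-fixed vectors to $\eta$-fixed vectors and the fixed vectors span, $g$ intertwines the two twisted actions, $g\bigl(\rho_\sigma(\asi{v})\bigr) = \eta_\sigma(\asi{(g(v))})$ for all $v$; rewriting the right-hand side via $\asi{(g(v))} = \asi{g}(\asi{v})$ and cancelling the bijection $v \mapsto \asi{v}$ gives $g\,\rho_\sigma = \eta_\sigma\,\asi{g}$, hence $\eta_\sigma = g\,\rho_\sigma\,\asi{g}^{-1}$ and $[\rho] = [\eta]$.

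\textbf{Surjectivity.} Here lies the real content. Given a $K$-form $\m^K$, fix an $L$-isomorphism $f\colon \n^L \to \m^L$ and set $\rho_\sigma := f^{-1}\circ \asi{f} \in \Aut(\n^L)$, where $\asi{f}$ is formed using the standard Galois actions on $\n^L$ and $\m^L$. The second identity gives $\asi{(\rho_\tau)} = \asi{(f^{-1})}\circ \prescript{\sigma\tau}{}{f}$, whence $\rho_\sigma\,\asi{(\rho_\tau)} = f^{-1}\circ \asi{(f\circ f^{-1})}\circ \prescript{\sigma\tau}{}{f} = f^{-1}\circ \prescript{\sigma\tau}{}{f} = \rho_{\sigma\tau}$, so $\rho$ satisfies the cocycle relation. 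For any $v$ one computes $\rho_\sigma(\asi{v}) = f^{-1}\bigl(\prescript{\sigma}{}{(f(v))}\bigr)$, the inner Galois action being that on $\m^L$, so $v \in \n^K_\rho$ exactly when $f(v)$ is $\Gal(L/K)$-fixed in $\m^L$, i.e.\ when $f(v) \in \m^K$. Thus $f$ restricts to a $K$-isomorphism $\n^K_\rho \to \m^K$ and $[\n^K_\rho] = [\m^K]$.

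The one step needing genuine care beyond formal manipulation is verifying that the cocycle $\rho$ just built is \emph{continuous} for the Krull topology when $L/K$ is of infinite degree, which is exactly the hypothesis required for $\rho \in Z^1(L/K,\Aut(\n^L))$. I expect this to be the main (if modest) obstacle, and the remedy is a finiteness observation: as an $L$-linear map of the finite-dimensional space $\n^L$, the isomorphism $f$ has matrix entries, in a fixed $K$-basis of $\n^K$, lying in some finite subextension $N/K$. Consequently $\asi{f}$, and hence $\rho_\sigma$, depends only on the restriction $\sigma|_N$, so $\rho$ is constant on cosets of the open subgroup $\Gal(L/N)$ and therefore continuous. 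Combining the three parts shows that $[\rho] \mapsto [\n^K_\rho]$ is a bijection of pointed sets, as claimed.
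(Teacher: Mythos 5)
Your argument is correct, but there is nothing in the paper to compare it against line by line: the paper does not prove this theorem, it only cites \cite{gs84-1} and \cite{berh10-1}. What you have written is a correct, self-contained reconstruction of the standard Galois-descent argument that those references contain. The two formal identities you isolate ($\asi{(g(v))}=\asi{g}(\asi{v})$ and $\asi{(\ata{\varphi})}=\prescript{\sigma\tau}{}{\varphi}$) do carry the well-definedness, injectivity and cocycle computations, and you correctly identify the one genuinely non-formal input, namely that the fixed vectors of the twisted action span $\n^L$ (equivalently that $\n^K_\rho\otimes_K L\to\n^L$ is an isomorphism), which the paper quotes separately just before the theorem. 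Two points in your write-up deserve credit: in the injectivity step you rightly note that the two $\sigma$-semilinear maps $v\mapsto g(\rho_\sigma(\asi{v}))$ and $v\mapsto\eta_\sigma(\asi{(g(v))})$ need only be compared on the spanning set $\n^K_\rho$; and you address the continuity of the cocycle $\rho_\sigma=f^{-1}\circ\asi{f}$ in the infinite-degree case via the finiteness of the subextension generated by the matrix entries of $f$ (using that Galois extensions are algebraic, so finitely generated subextensions are finite) --- a point that is easy to overlook and that the paper's framework, with its Krull-topology continuity requirement on cocycles, genuinely needs.
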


	\begin{proof}
		See \cite[Theorem 1.3 and 1.4]{gs84-1} or \cite[Proposition III.9.1., Remark III.9.2. and Remark III.9.8.]{berh10-1} for a proof of this statement.
	\end{proof}
	
	To see how the inverse of this map works, let $\m^K$ be a $K$-form of $\n^L$. By definition there exists an isomorphism $f:\m^K \otimes_K L \to  \n^L$ of Lie algebras defined over $L$. Then we can associate to $\m^K$ a cocycle $\rho^{\m^K} \in Z^1(L/K, \Aut(\n^L))$ defined by
	\[ \rho_\sigma^{\m^K} = f \left(\asi{f}\right)^{-1}. \]
	for all $\sigma \in \Gal(L/K)$ and $v \in \n^L$. Of course the cocycle $\rho^{\m^K}$ depends on the choice of isomorphism $f$, but its class $[\rho^{\m^K}]$ in $H^1\left(L/K, \Aut\left(\n^L\right)\right)$ does not. In fact this class only depends on the $K$-isomorphism equivalence class of $\m^K$. The inverse of the map from Theorem \ref{thm:GaloisDescentLieAlgebras} is then given by the assignment $\left[\m^K\right] \mapsto \left[\rho^{\m^K}\right]$.
	
	We now present two well-known examples of groups for which the first Galois cohomology set is trivial. First, consider the general linear group $\GL_n(L)$ with the coefficient-wise $\Gal(L/K)$-action: $\asi{(a_{ij})_{ij}} := (\sigma(a_{ij}))_{ij}$ for any $(a_{ij})_{ij} \in \GL_n(L)$ and $\sigma \in \Gal(L/K)$. From the fact that $\sigma$ preserves the addition and multiplication in $L$, it follows that this is an action by group automorphisms on $\GL_n(L)$.
	
	\begin{theorem}[Generalized Hilbert's theorem 90]	\label{thm:GenHilbert90} Let $L/K$ be a Galois extension and consider the general linear group $\GL_n(L)$. Then $H^1(L/K, \GL_n(L))$ is trivial, i.e. it contains only one element.
	\end{theorem}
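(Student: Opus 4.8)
The plan is to prove this via the classical Poincaré series (or ``averaging'') argument, combined with Dedekind's lemma on the linear independence of field automorphisms, after first reducing to the case of a finite degree extension. Throughout, $\prescript{\sigma}{}{M}$ denotes the entrywise action of $\sigma$ on a matrix $M \in \GL_n(L)$, and I will aim to trivialize an arbitrary continuous cocycle $\rho$, i.e. to produce $g \in \GL_n(L)$ with $\rho_\sigma = g^{-1}\,\prescript{\sigma}{}{g}$, which is precisely the condition $g\rho_\sigma\,\prescript{\sigma}{}{g}^{-1} = I$ making $\rho$ equivalent to the trivial cocycle.

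First I would reduce to the finite case. Since $\GL_n(L)$ carries the discrete topology and $\Gal(L/K)$ is profinite, hence compact, a continuous cocycle $\rho$ takes only finitely many values, all of whose entries generate a finite subextension $F_0/K$. As $\rho^{-1}(I)$ is open and contains the identity, one can choose a finite degree Galois subextension $F/K$ with $F_0 \subset F \subset L$ such that $\Gal(L/F) \subset \rho^{-1}(I)$. For $\tau \in \Gal(L/F)$ we then have $\rho_\tau = I$, so the cocycle relation gives $\rho_{\sigma\tau} = \rho_\sigma\,\prescript{\sigma}{}{\rho_\tau} = \rho_\sigma$; thus $\rho$ is constant on the cosets of the normal subgroup $\Gal(L/F)$ and takes values in $\GL_n(F)$, so it descends to a cocycle $\bar\rho\colon \Gal(F/K) \to \GL_n(F)$. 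A trivialization of $\bar\rho$ over $F$ is automatically one of $\rho$ over $L$, so it suffices to treat the case where $\Gal(L/K)$ is a finite group.

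For the finite case, fix an arbitrary $n\times n$ matrix $A$ over $L$ and form the Poincaré series $P_A := \sum_{\tau \in \Gal(L/K)} \rho_\tau\,\prescript{\tau}{}{A}$. Writing the cocycle identity as $\prescript{\sigma}{}{\rho_\tau} = \rho_\sigma^{-1}\rho_{\sigma\tau}$ and reindexing the finite sum by $\tau' = \sigma\tau$, a direct computation yields $\prescript{\sigma}{}{P_A} = \rho_\sigma^{-1} P_A$ for every $\sigma$. Hence, if $A$ can be chosen so that $P_A$ is invertible, then $g := P_A^{-1}$ satisfies $\prescript{\sigma}{}{g} = g\rho_\sigma$, i.e. $\rho_\sigma = g^{-1}\,\prescript{\sigma}{}{g}$, and $\rho$ represents the distinguished class of $H^1(L/K, \GL_n(L))$. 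Note $\rho_e = I$ follows from the cocycle relation applied to $\sigma = \tau = e$.

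The heart of the argument, and the step I expect to be the main obstacle, is producing an invertible $P_A$. Since the $j$-th column of $P_A$ equals $\sum_\tau \rho_\tau\,\prescript{\tau}{}{c}$ with $c = Ae_j \in L^n$ arbitrary and independent across the columns, it suffices to show that the set $C := \{ \sum_\tau \rho_\tau\,\prescript{\tau}{}{c} : c \in L^n \}$ spans $L^n$ over $L$. If it did not, there would be a nonzero row vector $w \in L^n$ with $\sum_\tau (w\rho_\tau)\,\prescript{\tau}{}{c} = 0$ for all $c \in L^n$; specializing $c$ to scalar multiples of each standard basis vector reduces this to relations $\sum_\tau u_\tau\,\tau(x) = 0$ for all $x \in L$, with coefficients $u_\tau \in L$. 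Dedekind's lemma that distinct automorphisms of $L$ are linearly independent over $L$ then forces every such coefficient to vanish, so $w\rho_\tau = 0$ for all $\tau$; taking $\tau = e$ gives $w = 0$, a contradiction. This guarantees the required invertible $P_A$ and completes the proof.
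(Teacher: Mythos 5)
Your proof is correct and complete. Note that the paper does not actually prove this statement: its ``proof'' is a citation to Serre's \emph{Galois cohomology} (p.~122, Lemma 1), so any self-contained argument is already more than the paper supplies. What you give is the classical Poincar\'e-series (averaging) proof: reduce to a finite Galois subextension using continuity and compactness of the profinite Galois group, form $P_A=\sum_\tau \rho_\tau\,\prescript{\tau}{}{A}$ so that $\prescript{\sigma}{}{P_A}=\rho_\sigma^{-1}P_A$, and invoke Dedekind's independence of characters to choose $A$ with $P_A$ invertible; then $g=P_A^{-1}$ trivializes the cocycle. The lemma Serre proves is usually presented in the equivalent ``Galois descent for vector spaces'' form: the cocycle twists the semilinear action of $\Gal(L/K)$ on $L^n$, and one shows the $K$-space of fixed vectors has full rank, again via independence of characters (or the normal basis theorem); the matrix whose columns form a $K$-basis of fixed vectors is precisely your $P_A$. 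So the two routes are the same in substance. Your reduction to the finite-degree case is handled carefully (constancy of $\rho$ on cosets of $\Gal(L/F)$, values landing in $\GL_n(F)$, and descent of the cocycle identity all check out), as are the identities $\rho_e=I$ and $\rho_\sigma=g^{-1}\,\prescript{\sigma}{}{g}$, and the passage from ``the image spans $L^n$'' to an invertible $P_A$ by extracting a basis from the spanning set.
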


	\begin{proof}
		This is exactly \cite[p.122, Lemma 1]{serr02-1}.
	\end{proof}

	\noindent Note that $\GL_n(L)$ is also the automorphism group of the abelian Lie algebra of dimension $n$ defined over $L$. Using Theorem \ref{thm:GaloisDescentLieAlgebras}, this agrees exactly with the fact that an abelian Lie algebra over $L$ has only one $K$-form up to $K$-isomorphism.
	
	Secondly, consider the additive group $L_a$ of the field $L$. We can define a $\Gal(L/K)$-action on $L_a$ by $\asi{l} := \sigma(l)$ for any $l \in L$ and $\sigma \in \Gal(L/K)$. It is clear that this is an action by group automorphisms.
	
	\begin{theorem}
		\label{thm:CohAddGroup}
		Let $L/K$ be a Galois extension and $L_a$ the additive group of the field $L$. Then $H^1(L/K, L_a)$ is trivial.
	\end{theorem}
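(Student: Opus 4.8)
The plan is to establish the additive analogue of Hilbert's Theorem~90, and, exactly as for the multiplicative statement in Theorem~\ref{thm:GenHilbert90}, to reduce the possibly infinite extension to the finite degree case before doing the main computation. Written additively, a cocycle $\rho \in Z^1(L/K, L_a)$ satisfies $\rho_{\sigma\tau} = \rho_\sigma + \sigma(\rho_\tau)$ (note $\rho_1 = 0$ follows by taking $\sigma=\tau=1$), and triviality of $H^1(L/K, L_a)$ means that every such $\rho$ is a coboundary, i.e.\ that there exists $h \in L$ with
\[ \rho_\sigma = \sigma(h) - h \qquad \text{for all } \sigma \in \Gal(L/K). \]
First I would reduce to the finite degree case. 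Since $\Gal(L/K)$ is profinite, hence compact, and $L_a$ carries the discrete topology, continuity of $\rho$ forces $\rho^{-1}(\{0\})$ to be open and to contain an open subgroup $\Gal(L/N_0)$ on which $\rho$ vanishes; the cocycle relation then makes $\rho$ constant on the left cosets of this subgroup. Passing to the normal core yields an open normal subgroup $U = \Gal(L/N)$ with $N/K$ finite Galois such that $\rho$ vanishes on $U$ and factors through $\Gal(N/K) \cong \Gal(L/K)/U$. Using normality of $U$ together with the cocycle relation one checks that $u(\rho_\sigma) = \rho_\sigma$ for all $u \in U$, so the values of $\rho$ lie in $L^U = N$. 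Thus $\rho$ descends to a genuine cocycle in $Z^1(N/K, N_a)$, and it suffices to treat finite $L/K$.

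For finite $L/K$ the result follows from an averaging argument. Because $\mathrm{char}\,K = 0$ the extension is separable, so the trace $\tr_{L/K}:L \to K$ is surjective and we may choose $\alpha \in L$ with $\tr_{L/K}(\alpha) = \sum_{\tau \in \Gal(L/K)} \tau(\alpha) = 1$. Setting $g := \sum_{\tau \in \Gal(L/K)} \rho_\tau\,\tau(\alpha)$ and using $\sigma(\rho_\tau) = \rho_{\sigma\tau} - \rho_\sigma$ together with the reindexing $\tau \mapsto \sigma\tau$, one computes
\[ \sigma(g) = \sum_\tau \sigma(\rho_\tau)\,\sigma\tau(\alpha) = \sum_\tau \rho_{\sigma\tau}\,\sigma\tau(\alpha) - \rho_\sigma\sum_\tau \sigma\tau(\alpha) = g - \rho_\sigma, \]
where the final step uses $\sum_\tau \sigma\tau(\alpha) = \tr_{L/K}(\alpha) = 1$. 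Hence $\rho_\sigma = g - \sigma(g) = \sigma(h) - h$ with $h := -g$, so $\rho$ is the coboundary of $h$, that is, equivalent to the trivial cocycle. This proves that $H^1(L/K, L_a)$ is trivial.

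I expect the only genuine subtlety to be the first step: verifying carefully that a continuous additive cocycle descends to a finite Galois subextension, both as a map (constancy on cosets of an open subgroup, via compactness and the cocycle relation) and in its values (landing in the fixed field $N$, via normality of $U$). The finite case itself is a short direct computation, and is in fact cleaner than the multiplicative Theorem~\ref{thm:GenHilbert90}: instead of invoking the linear independence of distinct field automorphisms, one only needs a single element of nonzero trace, which is automatic in characteristic zero.
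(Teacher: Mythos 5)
Your proof is correct: the reduction of a continuous cocycle to a finite Galois subextension (vanishing on an open subgroup, constancy on its cosets, and descent of the values to the fixed field of the normal core) is carried out properly, and the trace/averaging computation in the finite case is the standard additive Hilbert 90 argument, with the coboundary condition $\rho_\sigma = \sigma(h) - h$ matching the paper's notion of equivalence to the trivial cocycle. The paper itself gives no argument but simply cites \cite[p.72, Proposition 1]{serr02-1}, and your write-up is essentially a self-contained version of that cited proof, so there is nothing to correct.
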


	\begin{proof}
		This is exactly \cite[p.72, Proposition 1]{serr02-1}.
	\end{proof}
	
	So far we only considered Galois extensions $L/K$ which are by definition algebraic. The following result tells us that it is enough to consider algebraic extensions in order to describe all $K$-forms. We will denote by $\overline{K}$ the algebraic closure of $K$. In the remainder of the paper, every algebraic extension of $K$ will be taken as a subfield of $\overline{K}$. This convention in particular implies that isomorphic field extensions of $K$ are in fact identical.
	
	\begin{prop}
		\label{prop:ExistsIsoAlgClosure}
		Let $L/K$ be any (not necessarily algebraic) field extension and $\n^K$, $\m^K$ Lie algebras over $K$. If $\n^K \otimes L \cong \m^K \otimes L$, then so is $\n^K \otimes \overline{K} \cong \m^K \otimes \overline{K}$ and moreover, there exists a finite degree field extension $N/K$ such that $\n^K \otimes N \cong \m^K \otimes N$.
	\end{prop}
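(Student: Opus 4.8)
The plan is to reformulate the existence of an isomorphism $\n^K \otimes F \cong \m^K \otimes F$, for a field extension $F/K$, as the existence of an $F$-point on a single affine variety defined over $K$, and then to invoke the Nullstellensatz to descend from the arbitrary extension $L$ first to a finite subextension and then into $\overline{K}$.

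First I would fix $K$-bases of $\n^K$ and $\m^K$. Since $\n^K \otimes L \cong \m^K \otimes L$ forces $\dim_K \n^K = \dim_K \m^K =: n$, I can treat a candidate isomorphism as an $n \times n$ matrix $A = (x_{ij})$ of indeterminates. Expressing the structure constants of $\n^K$ and $\m^K$ in these bases (all of which lie in $K$), the condition that $A$ intertwines the two Lie brackets becomes a finite system of polynomial equations with coefficients in $K$ in the entries $x_{ij}$, which I collect into an ideal $J \subset K[x_{ij}]$. To encode invertibility I introduce an auxiliary variable $y$ and pass to the $K$-algebra
\[ R := K[x_{ij}, y] \big/ \big( J + (\det(x_{ij}) \cdot y - 1) \big). \]
By construction, for any field extension $F/K$ the $K$-algebra homomorphisms $R \to F$ correspond bijectively to Lie algebra isomorphisms $\n^K \otimes F \to \m^K \otimes F$: such a homomorphism sends $x_{ij}$ to the entries of an invertible matrix (invertibility being forced by the relation involving $y$) that preserves the brackets, and conversely.

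Next I would translate the hypothesis. The assumption $\n^K \otimes L \cong \m^K \otimes L$ yields a $K$-algebra map $R \to L$ to a nonzero ring, so $R \neq 0$. Now $R$ is a nonzero finitely generated $K$-algebra, hence has a maximal ideal $\mathfrak{q}$, and by Zariski's lemma (the strong form of the Nullstellensatz) the residue field $N := R/\mathfrak{q}$ is a finite degree extension of $K$. The quotient map $R \to N$ is a $K$-algebra homomorphism and therefore corresponds to a Lie algebra isomorphism $\n^K \otimes N \cong \m^K \otimes N$, which gives the "moreover" part. Finally, since $N/K$ is finite and hence algebraic, our standing convention places $N$ inside $\overline{K}$; extending scalars along $N \hookrightarrow \overline{K}$ turns the isomorphism over $N$ into an isomorphism $\n^K \otimes \overline{K} \cong \m^K \otimes \overline{K}$.

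The routine bookkeeping is checking that the bracket-preservation conditions really are polynomial in the $x_{ij}$ with coefficients in $K$ and that homomorphisms $R \to F$ match isomorphisms bijectively; this is a direct computation with structure constants. The only genuinely substantive input is the appeal to Zariski's lemma guaranteeing that the residue field at a maximal ideal of $R$ is \emph{finite} over $K$, which is precisely what upgrades a point over an abstract extension to a point over a finite one. I expect the main subtlety to be purely organizational: ensuring that invertibility is incorporated correctly via the localization variable $y$, so that homomorphisms out of $R$ parametrize genuine isomorphisms rather than arbitrary bracket-preserving endomorphisms.
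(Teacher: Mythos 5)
Your proposal is correct and is essentially the same argument the paper relies on: the paper's proof is just a pointer to the Nullstellensatz-based argument in Grunewald--Segal and the other cited reference, and what you write out (encoding bracket-preserving invertible matrices as the points of an affine $K$-variety via the Rabinowitsch variable $y$, deducing $R \neq 0$ from the $L$-point, and applying Zariski's lemma to a maximal ideal to land in a finite extension $N \subset \overline{K}$) is precisely that argument in full detail. No gaps.
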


	\begin{proof}
		This is a consequence of Hilbert's Nullstellensatz, see \cite[p.124, (i)]{gs84-1} or \cite[Lemma 3.3.]{sulc21-1} for more details on how to prove this.
	\end{proof}
		
	Let us apply this result for $L = \C$, $K = \Q$ and a rational Lie algebra $\n^\Q$. Let $\n^{\Qbar}$, $\n^\R$ and $\n^\C$ denote the Lie algebras $\n^\Q \otimes \Qbar$, $\n^\Q \otimes \R$ and $\n^\Q \otimes \C$, respectively. Proposition \ref{prop:ExistsIsoAlgClosure} now gives us a bijection $\mathcal{F}_\Q(\n^{\Qbar}) \to \mathcal{F}_\Q(\n^\C): [\m^\Q] \mapsto [\m^\Q]$. Together with Theorem \ref{thm:GaloisDescentLieAlgebras}, this gives us a bijection
	\begin{equation}
		\label{eq:GaloisDescentOverC}
		H^1\left(\Qbar/\Q, \Aut\left(\n^{\Qbar}\right)\right) \to \mathcal{F}_\Q\left(\n^\C\right): \left[\rho \right] \mapsto \left[\n_\rho^\Q\right]
	\end{equation}
	which is a useful tool to classify the rational forms of a complex Lie algebra. 
	
	If we want a similar result for the rational forms of a real Lie algebra, the question arises which equivalence classes of $H^1(\Qbar/\Q, \Aut(\n^{\Qbar}))$ are mapped into $\mathcal{F}_\Q(\n^\R)$ under the bijection (\ref{eq:GaloisDescentOverC}) where we use that $\mathcal{F}_\Q(\n^\R)$ is a subset of $\mathcal{F}_\Q(\n^\C)$. Note that if we view $\Qbar$ as a subfield of $\C$, we have a continuous morphism $\nu:\Gal(\C/\R) \to \Gal(\Qbar/\Q): \sigma \mapsto \sigma|_{\Qbar}$. This gives a map
	\begin{equation}
		\omega:H^1\left(\Qbar/\Q, \Aut\left(\n^{\Qbar}\right)\right) \to H^1\left(\C/\R, \Aut\left(\n^\C\right)\right): [\rho] \mapsto [\rho \circ \nu].
	\end{equation}
	Now take any cocycle $\rho \in Z^1(\Qbar/\Q, \Aut(\n^{\Qbar}))$. From \cite[Theorem 1.4]{gs84-1}, it follows that we have the following equivalences
	\begin{align}
		\label{eq:equivalenceFormsOfRealLieAlgebra}
		\nonumber \left[\n^\Q_\rho\right] \in \mathcal{F}_\Q(\n^\R) &\Leftrightarrow \n^\Q_\rho \otimes \R \cong \n^\R\\
		&\Leftrightarrow \omega([\rho]) = [1] .
	\end{align}
	This tells us exactly what elements of $H^1(\Qbar/\Q, \Aut(\n^{\Qbar}))$ are being mapped into $\mathcal{F}_\Q(\n^\R)$ under the bijection (\ref{eq:GaloisDescentOverC}), namely the inverse image under $\omega$ of the class of the trivial cocycle $[1] \in H^1(\C/\R, \Aut(\n^\C))$, allowing us to classify the rational forms of a given real Lie algebra.
	
	\subsection{Galois cohomology of certain semi-direct products}
	\label{sec:galoisCohSemiDirectProd}
	
	In what follows, we prove a result on the Galois cohomology of a certain class of semi-direct products including wreath products. This result will be useful when calculating the Galois cohomology of the automorphism group of a Lie algebra associated to a graph in Section \ref{sec:classKformsL}. Let $L/K$ be a Galois extension. First, note that if $G_1$ and $G_2$ are two $\Gal(L/K)$-groups and $f:G_1 \to G_2$ is a $\Gal(L/K)$-equivariant group morphism, then we get a well-defined induced map on cohomology which we write as $f_\ast$ and is defined by
	\[f_\ast:H^1(L/K, G_1) \to H^1(L/K, G_2): [\rho] \mapsto [f \circ \rho].\]
	Let $L/K$ be a Galois extension and $A$ a finite group with a left action on the set $\{1, \ldots, n\}$ which we write as $a \cdot i$ for any $a \in A$ and $i \in \{1, \ldots, n\}$. Let $G_1, \ldots, G_n$ be $\Gal(L/K)$-groups such that $G_i = G_j$ for any $i \in \{1, \ldots, n\}$ and $j \in A\cdot i$. We define the semi-direct product $\left(\prod_{i = 1}^n G_i\right) \rtimes A$ by letting $A$ act on $\prod_{i = 1}^n G_i$ according to the law
	\[ a \cdot (g_1, \ldots, g_n) := \left( g_{a^{-1} \cdot 1}, \ldots, g_{a^{-1} \cdot n} \right) \]
	for any $a \in A$ and $g_i \in G_i$. Note that if all groups $G_i$ are equal, this group is just a wreath product, which are studied a lot in the literature. We can turn this group into a $\Gal(L/K)$-group as follows. We endow $A$ with the trivial $\Gal(L/K)$ action and $\prod_{i = 1}^n G_i$ with the induced component wise left $\Gal(L/K)$-action, i.e. $\asi{(g_1, \ldots, g_n) = (\asi{g_1}, \ldots , \asi{g_n})}$ for all $g_i \in G_i$ and $\sigma \in \Gal(L/K)$. Clearly, these are actions by group automorphisms. Note that the actions of $A$ and $\Gal(L/K)$ on $\prod_{i = 1}^n G_i$ commute:
	\[ \asi{\left(a \cdot(g_1, \ldots, g_n)\right)} =  \asi{(g_{a^{-1} \cdot 1}, \ldots, g_{a^{-1} \cdot n})} = (\asi{g_{a^{-1} \cdot 1}}, \ldots, \asi{g_{a^{-1} \cdot n}}) = a\cdot(\asi{g_1}, \ldots, \asi{g_n}) = a\cdot\left( \asi{(g_1, \ldots, g_n)}\right). \]
	At last we define a $\Gal(L/K)$-action on $\left(\prod_{i = 1}^n G_i\right) \rtimes A$ by $\asi{(g, a)} = (\asi{g}, \asi{a}) = (\asi{g}, a)$ for all $g \in \prod_{i = 1}^n G_i, a \in A$ and $\sigma \in \Gal(L/K)$. This is an action by automorphisms since
	\begin{align*}
		\asi{\left( (g, a)(h,b) \right)} &= \asi{(g a \cdot h, ab)}\\
		&= \left( \asi{(g a \cdot h)}, ab \right)\\
		&= \left( \asi{g} \asi{(a \cdot h)}, ab \right)\\
		&= \left( \asi{g} a \cdot \asi{h}, ab \right)\\
		&= \left( \asi{g}, a \right)\left( \asi{h}, b \right)\\
		&= \asi{(g,a)}\asi{(h, b)}
	\end{align*}
	for any $g,h \in \prod_{i = 1}^n G_i$ and $a,b \in A$.
	
	We can thus talk about the first Galois cohomology set $H^1(L/K, (\prod_{i = 1}^n G_i) \rtimes A)$. Let $\pi:\left(\prod_{i = 1}^n G_i\right) \rtimes A \to A: (g, a) \mapsto a$ denote the projection morphism and $\iota:A \to \left(\prod_{i = 1}^n G_i\right) \rtimes A: a \mapsto \left((1, \ldots, 1), a\right)$ the natural injection. Note that both $\pi$ and $\iota$ are $\Gal(L/K)$-equivariant. By consequence we have the well-defined maps on cohomology:
	\begin{align*}
	\pi_\ast&: H^1\left(L/K, \left(\prod_{i = 1}^n G_i \right) \rtimes A\right) \to H^{1}(L/K, A): [\alpha] \mapsto [\pi \circ \alpha]\\
	\iota_\ast&: H^1(L/K, A) \to H^1\left(L/K, \left(\prod_{i = 1}^n G_i\right) \rtimes A\right): [\alpha] \mapsto [\iota \circ \alpha].
	\end{align*}
	The set $H^1(L/K, A)$ is relatively easy to understand since $A$ is a group with trivial Galois action, so it is given by all continuous group morphisms from $\Gal(L/K)$ to $A$ up to conjugation by an element of $A$.
	
	\begin{theorem}
		\label{thm:GalCohSemiDirectPerm}
		Let $\left( \prod_{i = 1}^n G_i \right) \rtimes A$ be the $\Gal(L/K)$-group as defined above. Assume that for any finite degree intermediate extension $N/K$ and any $i \in \{1, \ldots, n\}$, the first Galois cohomology set $H^1(L/N, G_i)$ is trivial. Then we have that
		\[ \iota_\ast: H^{1}\left(L/K, A\right) \to H^1\left(L/K, \left(\prod_{i = 1}^n G_i\right) \rtimes A\right): [\rho] \mapsto [\iota \circ \rho] \]
		is a bijection with inverse $\pi_\ast$.
	\end{theorem}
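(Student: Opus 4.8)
The plan is to prove the theorem by showing that the two composites $\pi_\ast\circ\iota_\ast$ and $\iota_\ast\circ\pi_\ast$ are the respective identity maps. One direction is immediate: since $\pi\circ\iota=\Id_A$ by construction, functoriality of $(-)_\ast$ gives $\pi_\ast\circ\iota_\ast=\Id$ on $H^1(L/K,A)$, so $\iota_\ast$ is injective and $\pi_\ast$ is surjective. All the content therefore lies in proving $\iota_\ast\circ\pi_\ast=\Id$, that is, that every class $[\alpha]\in H^1(L/K,(\prod_{i=1}^nG_i)\rtimes A)$ equals $[\iota\circ\pi\circ\alpha]$; equivalently, that every cocycle into the semi-direct product is cohomologous to one taking values in $\iota(A)$.

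To organise this I would first split an arbitrary cocycle into coordinates. Writing $\alpha_\sigma=(g_\sigma,a_\sigma)$ with $a_\sigma=\pi(\alpha_\sigma)$, the cocycle identity $\alpha_{\sigma\tau}=\alpha_\sigma\,\asi{\alpha_\tau}$ unravels into the two conditions $a_{\sigma\tau}=a_\sigma a_\tau$ and $g_{\sigma\tau}=g_\sigma\,(a_\sigma\cdot\asi{g_\tau})$. The first says that $\sigma\mapsto a_\sigma$ is a continuous homomorphism $\Gal(L/K)\to A$, namely the cocycle $\pi\circ\alpha$. Introducing the twisted action $\sigma\ast g:=a_\sigma\cdot\asi{g}$ of $\Gal(L/K)$ on $G:=\prod_{i=1}^nG_i$ — which is a genuine action by group automorphisms precisely because the $A$- and Galois-actions on $G$ commute — the second condition reads $g_{\sigma\tau}=g_\sigma\,(\sigma\ast g_\tau)$, i.e.\ $(g_\sigma)$ is a cocycle for $\ast$. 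A direct computation shows that conjugating $\alpha$ by an element $(c,1)$ leaves $(a_\sigma)$ fixed and replaces $(g_\sigma)$ by $\sigma\mapsto c\,g_\sigma\,(\sigma\ast c)^{-1}$; hence $\alpha$ is cohomologous to $\iota\circ\pi\circ\alpha$ if and only if $(g_\sigma)$ is a coboundary for the twisted action. Thus the whole theorem reduces to a single statement: for the twisted action $\ast$ attached to any continuous homomorphism $a\colon\Gal(L/K)\to A$, the pointed set $H^1_\ast(\Gal(L/K),G)$ is trivial.

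To prove this triviality I would exploit the product structure. The twisted action permutes the factors $G_i$ through $a_\sigma$, so it is block-diagonal for the partition of $\{1,\dots,n\}$ into orbits of the image $A':=a(\Gal(L/K))\leq A$; since $H^1$ of a product with block-diagonal action is the product of the $H^1$'s, I may assume $A'$ acts transitively and that all $G_i$ equal a single group $H$. Writing $\Gamma_1:=a^{-1}(\Stab_{A'}(1))$, continuity of $a$ makes $\Gamma_1$ an open subgroup of $\Gal(L/K)$, so by the Galois correspondence $\Gamma_1=\Gal(L/N_1)$ for a finite degree intermediate extension $N_1/K$. The twisted module $G=\prod_{A'/\Stab(1)}H$ is then exactly the module co-induced from $\Gamma_1$, and a Shapiro-type isomorphism for non-abelian $H^1$ gives $H^1_\ast(\Gal(L/K),G)\cong H^1(\Gal(L/N_1),H)$. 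The hypothesis, applied to the finite degree intermediate extension $N_1/K$ and the group $H=G_i$, makes the right-hand side trivial, which finishes the argument.

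The main obstacle is exactly this last step: setting up and verifying the Shapiro-type identification with the co-induced module in the non-abelian, profinite–continuous setting, and matching it precisely with the twisted action $\ast$. To keep the paper self-contained I would instead realise it as a concrete untwisting. First restrict $(g_\sigma)$ to the open normal subgroup $\ker a=\Gal(L/N)$ (with $N/K$ finite Galois), where $\ast$ is the ordinary action; by the hypothesis over $N$ together with multiplicativity of $H^1$ over products, $H^1(\Gal(L/N),\prod_iG_i)$ is trivial, so after conjugating by a suitable $(c,1)$ I may assume $g_\sigma=1$ for all $\sigma\in\ker a$. The cocycle relation then forces $g_\sigma\in G^{\ker a}$ and $g_{\sigma\kappa}=g_\sigma$ for $\kappa\in\ker a$, so $(g_\sigma)$ descends to a twisted cocycle of the \emph{finite} group $\Gal(N/K)$ on $\prod_iG_i^{\ker a}$; one then repeats the orbit decomposition and handles each transitive block by an inflation–restriction argument, whose injective inflation map again reduces triviality to $H^1(L/N_1,G_i)$ from the hypothesis. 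The delicate points throughout are the continuity/openness bookkeeping and checking that the twisted action descends correctly to the fixed-point subgroups, neither of which should present a serious difficulty.
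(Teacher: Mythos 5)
Your proposal is correct in substance and its core reduction coincides with the paper's: both split a cocycle $\rho_\sigma=(g_\sigma,a_\sigma)$ into its $A$-part (a continuous homomorphism) and its $G$-part (a cocycle for the twisted action $\sigma\ast g=a_\sigma\cdot\asi{g}$), both observe that conjugation by $(c,1)$ changes only the $G$-part by a twisted coboundary, and both exploit that the stabilizers of the induced $\Gal(L/K)$-action on $\{1,\dots,n\}$ are open, hence of the form $\Gal(L/N_1)$ with $N_1/K$ finite, so that the hypothesis applies to the coordinate cocycles $\sigma\mapsto(g_\sigma)_i$ restricted to $\Stab(i)$. Where you diverge is in how the triviality of the twisted $H^1$ is finished off. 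You propose to quote a non-abelian Shapiro isomorphism identifying the orbit-block of the twisted module with a co-induced module (or, in your alternative route, to first trivialize on $\ker a$, descend to the finite quotient $\Gal(N/K)$, and combine inflation-injectivity with an orbit argument). The paper instead proves exactly this Shapiro-type statement by hand in two explicit conjugation steps: first an element $h$ built coordinate-wise from the hypothesis kills $(g_\sigma)_i$ for $\sigma\in\Stab(i)$, and then an element $r$ built from orbit representatives $m_1,\dots,m_k$ (with $r_j=(\tilde g_\sigma)_j^{-1}$ for any $\sigma$ carrying $m_i$ to $j$, shown to be independent of the choice of $\sigma$) kills the whole $G$-part. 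So the step you flag as ``the main obstacle'' is precisely the content of the paper's computation, and your citation-based route buys brevity at the cost of needing a reliable reference for continuous non-abelian Shapiro in the profinite setting, while the paper's route stays self-contained. Two small cautions if you pursue your version: your equivalence ``$\alpha\sim\iota\circ\pi\circ\alpha$ iff $(g_\sigma)$ is a twisted coboundary'' is only needed (and only justified) in the ``if'' direction, since cocycles can in principle also be conjugate via elements $(c,b)$ with $b\neq 1$; and in your kernel-first route the descended finite-group cocycle takes values in $\prod_i G_i^{\ker a}$ rather than $\prod_i G_i$, so you must indeed pass through the injectivity of inflation to apply the hypothesis, and you still have to carry out the orbit-representative propagation that you leave implicit. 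Neither point is a genuine gap, only places where the details must be written out.
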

	
	\begin{proof}
		Since $\pi \circ \iota = \text{Id}_A$, it follows that $\pi_\ast \circ \iota_\ast$ is the identity on $H^1(L/K, A)$ as well. For the other direction, we need to prove that $[\rho] = [\iota \circ \pi \circ \rho]$ for an arbitrary $[\rho] \in H^1(L/K, (\prod_{i = 1}^n G_i) \rtimes A)$.
		
		Take any cocycle $\rho:\Gal(L/K) \to (\prod_{i = 1}^n G_i) \rtimes A$. Let us write $\rho_\sigma = (g_\sigma, a_\sigma)$ for all $\sigma \in \Gal(L/K)$. We then have that
		\[ \rho_{\sigma \tau} = \rho_\sigma \asi{\rho}_\tau = (g_\sigma \, a_\sigma \cdot \asi{g}_\tau, a_\sigma a_\tau). \]
		Thus we have $g_{\sigma \tau} = g_\sigma \, a_\sigma \cdot \asi{g}_\tau$ and $a_{\sigma \tau} = a_\sigma a_\tau$. Let us denote for $h \in \prod_{i = 1}^n G_i$ by $(h)_i$ the $i$-th entry of $h$. Then we have
		\[ (g_{\sigma \tau})_i = (g_\sigma \, a_\sigma \cdot \asi{g}_\tau)_i = (g_\sigma)_i  \asi{(g_\tau)}_{a_{\sigma}^{-1}\cdot i} \]
		Note that the group morphism $\pi \circ \rho:\Gal(L/K) \to A$ induces an action of the Galois group on the set $\{1,  \ldots, n\}$. The stabilizers $\text{stab}(i)$ for some $i \in \{1, \ldots, n\}$ are subgroups of $\Gal(L/K)$. Moreover they are open subgroups since they can be written as the inverse image of $\{ a \in A \mid a\cdot i = i \}$ under the continuous map $\pi \circ \rho$ and $A$ is endowed with the discrete topology. By consequence we have $\text{stab}(i) = \Gal(L/L^{\text{stab}(i)})$.
		
		Now note that for $\sigma,\tau \in \text{stab}(i)$, we have
		\[ (g_{\sigma \tau})_{i} = (g_\sigma)_{i} \asi{(g_\tau)}_{i}. \]
		This shows that the assignment $\sigma \mapsto  (g_\sigma)_{i}$ is a cocycle from $\Gal\left(L/L^{\text{stab}(i)}\right)$ to $G_i$. By assumption, $H^1(L/L^{\text{stab}(i)}, G_i)$ is trivial and thus there exists a $h_i \in G_i$ such that $h_i (g_\sigma)_{i} \asi{h}_{i}^{-1} = 1$ for all $\sigma \in \text{stab}(i)$. This gives an element $h = (h_1, \ldots, h_n) \in \prod_{i = 1}^n G_i$. Then define a new cocycle $\tilde{\rho}:\Gal(L/K) \to (\prod_{i = 1}^n G_i) \rtimes A: \sigma \mapsto (h, 1)(g_\sigma, a_\sigma)\asi{(h, 1)}^{-1}$. By the way this $\tilde{\rho}$ is defined, it is clear that $[\rho] = [\tilde{\rho}]$. We also have that
		\[ \tilde{\rho}_\sigma = (\underbrace{h g_\sigma \, a_\sigma \cdot \asi{h}^{-1}}_{:= \tilde{g}_\sigma}, a_\sigma). \]
		Note that now, for $\sigma \in \text{stab}(i)$ we have
		\[ (\tilde{g}_\sigma)_i = h_i (g_\sigma)_i \asi{h}_{a^{-1}_{\sigma}\cdot i}^{-1} = h_i (g_\sigma)_i \asi{h}_{i}^{-1} = 1.\]
		
		Next, let us choose from each orbit of the action defined by $\pi \circ \rho\,\, (= \pi \circ \tilde{\rho})$ on $\{1, \ldots, n\}$, exactly one element $m_i$, giving a subset $\{ m_1, \ldots, m_k \} \subset \{ 1, \ldots, n \}$. For $j \in \text{orb}(m_i)$, we now define the element $r_j := (\tilde{g}_\sigma)_j^{-1}$ where $\sigma \in \Gal(L/K)$ is chosen such that $a_\sigma \cdot m_i = j$. This does not depend on the choice of $\sigma$. Indeed, if $\tau \in \Gal(L/K)$ also satisfies $a_\tau \cdot m_i = j$, then we get
		\[ (\tilde{g}_\tau)_j = (\tilde{g}_{\sigma \sigma^{-1} \tau})_j = (\tilde{g}_\sigma)_j \asi{(\tilde{g}_{\sigma^{-1} \tau})}_{a_\sigma^{-1} \cdot j} = (\tilde{g}_\sigma)_j \asi{(\tilde{g}_{\sigma^{-1}\tau})_{m_i}} = (\tilde{g}_\sigma)_j \]
		where we used that $\sigma^{-1} \tau \in \text{stab}(m_i)$ and thus $(\tilde{g}_{\sigma^{-1} \tau})_{m_i} = 1$. This gives an element $r = (r_1, \ldots, r_n) \in \prod_{i = 1}^n G_i$.
		
	 	We now have that
		\begin{equation*}
		(r, 1)\tilde{\rho}_\sigma  \asi{(r, 1)}^{-1} = (r,1)(\tilde{g}_\sigma, a_\sigma) \left(\asi{r}^{-1}, 1\right)
		= (r \tilde{g}_\sigma \, a_\sigma \cdot \asi{r}^{-1}, a_\sigma).
		\end{equation*}
		At last we show that $r \tilde{g}_\sigma \, a_\sigma \cdot \asi{r}^{-1} = (1, \ldots, 1)$ for all $\sigma \in \Gal(L/K)$. Let $j \in \text{orb}(m_i)$ and let $\tau \in \Gal(L/K)$ such that $\tau(m_i) = j$. Then we have for any $\sigma \in \Gal(L/K)$:
		\begin{align*}
		\left(r \tilde{g}_\sigma \, a_\sigma \cdot \asi{r}^{-1} \right)_j &= r_j  (\tilde{g}_\sigma)_j   \asi{r}_{a_\sigma^{-1} \cdot j}^{-1}\\
		&= (\tilde{g}_{\tau})^{-1}_j  (\tilde{g}_\sigma)_j  \asi{\left(\tilde{g}_{\sigma^{-1} \tau}\right)}_{a_\sigma^{-1}\cdot j} \\
		&= (\tilde{g}_{\tau})^{-1}_j  (\tilde{g}_\sigma)_j \asi{\left( (\tilde{g}_{\sigma^{-1}})_{a_\sigma^{-1}\cdot j} \prescript{\sigma^{-1}}{}{(\tilde{g}_{\tau})}_j   \right)}\\
		&= (\tilde{g}_{\tau})^{-1}_j  (\tilde{g}_\sigma)_j \asi{(\tilde{g}_{\sigma^{-1}})}_{a_\sigma^{-1} \cdot j}(\tilde{g}_{\tau})_j \\
		&= (\tilde{g}_{\tau})^{-1}_j (\tilde{g}_{\sigma \sigma^{-1}})_j  (\tilde{g}_{\tau})_j \\
		&= (\tilde{g}_{\tau})^{-1}_j (\tilde{g}_{\tau})_j \\
		&= 1.
		\end{align*}
		By consequence we have that $(r, 1)\tilde{\rho}_\sigma \asi{(r, 1)}^{-1} = ((1, \ldots, 1), a_\sigma)$ for any $\sigma \in \Gal(L/K)$. Note that $(\iota \circ \pi \circ \rho)_\sigma = ((1, \ldots, 1), a_\sigma)$ and thus that we have shown that $[\iota \circ \pi \circ \rho] = [\tilde{\rho}] = [\rho]$.
	\end{proof}

	\section{Galois cohomology of $\Aut(\n^{\overline{K}}_{\G,c})$}
	\label{sec:classKformsL}
	
	Let $\G = (S,E)$ be a graph and $L/K$ a Galois extension of subfields of $\C$. The natural inclusion $\mathfrak{f}^K(S) \hookrightarrow \mathfrak{f}^L(S)$ induces an inclusion $\n_{\G, c}^K \hookrightarrow \n_{\G, c}^L$. Using this inclusion we get an isomorphism $\n_{\G, c}^K \otimes L \cong \n_{\G, c}^L: v \otimes l \mapsto lv$ of Lie algebras over $L$. Thus $\n_{\G, c}^K$ is a distinguished $K$-from of $\n_{\G, c}^L$. As defined in section \ref{sec:GaloisCohomologyDefinitions}, this gives an action of $\Gal(L/K)$ on $\n_{\G, c}^L$ by semi-linear maps which fix the vectors in the $K$-form $\n_{\G, c}^K$. In particular the action fixes the vertices $S \subset \n_{\G, c}^L$. This action induces a $\Gal(L/K)$-action on $\Aut(\n_{\G, c}^L)$ and thus we can talk about the first Galois cohomology set $H^1(L/K, \Aut(\n_{\G, c}^L))$.
	
	The goal of this section is to compute the first Galois cohomology set $H^1(\overline{K}/K, \Aut(\n_{\G,c}^{\overline{K}}))$. In order to do so we will use a known result which reduces this calculation from the full automorphism group to the normalizer of a Cartan subgroup of $\Aut(\n_{\G,c}^{\overline{K}})$, i.e. the Zariski connected centralizer of a maximal torus.
	
	\begin{definition}
		For a map $\Psi:S \to K$, we say an automorphism $f$ of $\n_{\G,c}^K$ is a \textit{vertex-diagonal automorphism determined by $\Psi$} if it holds that $\forall \alpha \in S: f(\alpha) = \Psi(\alpha) \alpha$. We will write $D_{\G,c}$ for the subgroup of $\Aut(\n_{\G, c}^K)$ of all vertex diagonal automorphisms.
	\end{definition}
	
	A nice fact about the Lie algebras $\n_{\G, c}^K$ is that for any choice of a map $\Psi:S \to L$, there is a unique vertex-diagonal automorphism determined by $\Psi$. This follows immediately from the structure of $G$ (Theorem \ref{thm:structureThmG}). We will denote this automorphism with $f_\Psi$. Using this notation we can also write $D_{\G,c} = \{ f_\Psi \mid \Psi:S \to K \}$. It is also a well know fact that $D_{\G,c}$ is a maximal (split) torus of the linear algebraic group $\Aut(\n_{\G, c}^K)$. Let us also write $D_S = p(D_{\G,c})$ for the projection of the vertex diagonal automorphisms onto $V$ which gives all diagonal linear maps with respect to the basis of vertices $S$.
	
	\begin{lemma}
		\label{lem:normalizerOfVertexDiag}
		The normalizer of $D_{\G,c}$ in $\Aut(\n_{\G, c}^K)$ is contained in $T_{\G,c}$.
	\end{lemma}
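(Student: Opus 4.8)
The plan is to show that if $g \in \Aut(\n_{\G,c}^K)$ normalizes the maximal torus $D_{\G,c}$, then $g$ preserves the grading, i.e. $g(V) = V$, which is exactly the condition defining $T_{\G,c}$. The key observation is that the subspace $V$ can be characterized internally in terms of how $D_{\G,c}$ acts, so that any automorphism normalizing $D_{\G,c}$ must respect this characterization.

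**The weight-space approach.** First I would recall that $D_{\G,c}$ acts on $\n_{\G,c}^L$ (after extending scalars to work over the splitting field, or just over $K$ if $K$ is already large enough) and that this action is diagonalizable, decomposing $\n_{\G,c}^L$ into weight spaces. Because $\n_{\G,c}^K = V \oplus \bigoplus_{i=1}^{c-1} V^{i+1}/W_\G^i$ is a graded Lie algebra generated in degree one by the vertices, the weights occurring on $V$ are precisely the coordinate functions $\alpha \mapsto \Psi(\alpha)$ attached to single vertices, while the weights in higher graded pieces $V^{i+1}/W_\G^i$ are sums of $i+1$ such coordinate functions (counted with multiplicity coming from which vertices appear in a given bracket). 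The plan is to distinguish $V$ from the higher pieces by the \emph{form} of its weights: weights on $V$ are the ``primitive'' ones $\chi_\alpha$, whereas every weight appearing in degree $\geq 2$ is a nontrivial sum of at least two such characters.

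**Normalization forces preservation of $V$.** Next I would use the standard fact that conjugation by $g$ permutes the weight spaces of $D_{\G,c}$: if $g$ normalizes $D_{\G,c}$, then $g$ induces an automorphism $\psi$ of the torus (equivalently a permutation of its character lattice), and $g$ maps the weight space $\n_\chi$ to $\n_{\psi^{-1} \cdot \chi}$. Since $g$ is a Lie algebra automorphism, it must send the derived subalgebra into itself and, more precisely, it respects the lower central series $\gamma_i$. The subspace $V$ maps isomorphically onto the abelianization $\n_{\G,c}^K/[\n_{\G,c}^K,\n_{\G,c}^K]$ via the projection $p$, so the ``degree-one'' weights $\chi_\alpha$ are exactly those whose weight space survives in the abelianization. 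Because $g$ commutes with passing to the abelianization and permutes weights, it must carry the set of degree-one weights to itself, hence $g(V) = V$. The point to nail down carefully is that the permutation of characters induced by $g$ cannot mix a primitive character $\chi_\alpha$ with a genuinely decomposable one $\chi_{\alpha_1} + \cdots + \chi_{\alpha_{i+1}}$, which follows from the grading being preserved at the level of the abelianization.

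**Main obstacle.** I expect the main difficulty to be the bookkeeping around the character lattice: one must verify that the additive/multiplicative structure of the weights genuinely separates $V$ from the higher graded pieces, so that no automorphism of the torus can send a primitive weight to a sum of several primitive weights. This is essentially the statement that the weights on $V$ are the unique minimal generators of the monoid of all weights appearing in $\n_{\G,c}^K$, and that any torus automorphism realized by conjugation by an honest Lie algebra automorphism must preserve this generating set. Once that structural fact is isolated, the conclusion $g \in T_{\G,c}$ is immediate from the definition of $T_{\G,c}$.
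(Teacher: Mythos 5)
Your proposal is correct, and it rests on the same underlying principle as the paper's proof: the subspace $V$ can be recovered from the action of $D_{\G,c}$, so anything normalizing $D_{\G,c}$ must preserve $V$. The difference is in execution. You work with the full weight-space decomposition of the torus: you note that the weights on $V$ are the primitive characters $\chi_\alpha$ while those on $V^{i+1}/W_\G^i$ are sums of $i+1$ of them, and you then have to argue that the automorphism of the character lattice induced by $g$ cannot send a primitive weight to a decomposable one. This does work --- the cleanest way to close it is the one you sketch, namely that $g$ preserves $[\n_{\G,c}^K,\n_{\G,c}^K]$, that every weight space of degree $\geq 2$ lies in the derived algebra while the degree-one weight spaces are exactly the lines $K\alpha$, and that no character occurs in both (in the lattice $\Z^S$ one has $e_\beta \neq e_{\alpha_1}+\cdots+e_{\alpha_{i+1}}$ for $i\geq 1$), so $g$ must permute the lines $K\alpha$ among themselves. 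The paper short-circuits all of this bookkeeping by evaluating at a single well-chosen element of the torus: take $f = f_\Psi$ with $\Psi \equiv 2$. Then $gfg^{-1}$ is again vertex-diagonal with the same eigenvalues as $f$, its eigenvalues on degree $i$ are products of $i$ eigenvalues of $gfg^{-1}$ on $V$ and hence exceed $2$, so its $2$-eigenspace is exactly $V$; since $gfg^{-1}(g\alpha) = 2g\alpha$, the map $g$ sends $V$ into that eigenspace, giving $g(V)=V$ directly. So your route is sound and slightly more structural (it identifies the whole character monoid), but the ``main obstacle'' you flag evaporates entirely in the paper's version because one element of the torus already separates $V$ from the higher graded pieces by eigenvalue alone.
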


	\begin{proof}
		Let $g$ be an element of the normalizer of $D_{\G,c}$ in $\Aut(\n_{\G, c}^K)$. Let $f \in D_{\G,c}$ be the vertex diagonal automorphism determined by the constant map $\Psi:S \to L:\alpha \mapsto 2$. By definition of the normalizer we know that $gfg^{-1}$ is again vertex diagonal with the same eigenvalues as $f$ (counted with multiplicities). Thus we know that the eigenspace of $gfg^{-1}$ with eigenvalue 2 has dimension equal to $|S|$. Note that the eigenvalues of $ghg^{-1}$ on the derived algebra $[\n_{\G,c}^K, \n_{\G,c}^K]$ will have to be strictly greater than $2$ since they are equal to the product of two eigenvalues of $gfg^{-1}$. This shows that $V$ is exactly the eigenspace of $gfg^{-1}$ with eigenvalue 2. Note that for any $\alpha \in S$ we have $gfg^{-1}g\alpha = gf\alpha = 2 g\alpha$ and thus that $g$ maps $V$ into the eigenspace of $gfg^{-1}$ with eigenvalue $2$ which we just showed is equal to $V$. This proves that $g(V) = V$ and thus that $g \in T_{\G,c}$.
	\end{proof}
	
	In particular, the above lemma implies that the centralizer of $D_{\G, c}$ in $\Aut(\n_{\G, c}^K)$ is contained in $T_{\G,c}$. From the structure of $G = p(T_{\G,c})$ it follows immediately that $D_{\G,c}$ is its own centralizer. We thus have that $D_{\G, c}$ is a Cartan subgroup of $\Aut(\n_{\G,c}^K)$. The following result proven in \cite[Chapter 3, Lemma 6]{serr02-1} will thus come in handy.
	
	\begin{lemma}
		\label{lem:galoisCohCartanSubgroup}
		Let $C$ be a Cartan subgroup of a linear algebraic group $A$ defined over a perfect field $K$, and let $N$ be the normalizer of $C$ in $A$. The canonical map $H^1(\overline{K}/K, N) \to H^1(\overline{K}/K, A)$ induced by the inclusion is surjective.
	\end{lemma}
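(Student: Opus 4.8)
The statement is classical and we will cite it from \cite{serr02-1}, but let me indicate the argument one would give. Write $\Gamma = \Gal(\overline{K}/K)$ and note that $C$, being a Cartan subgroup defined over $K$, has a normalizer $N = N_A(C)$ that is again a closed $K$-subgroup of $A$, so the quotient $A/N$ is a homogeneous $A$-variety defined over $K$. The plan is to reduce surjectivity of $H^1(\overline{K}/K, N) \to H^1(\overline{K}/K, A)$ to a question about rational points, and then to answer that question using the existence of Cartan subgroups over $K$.

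First I would invoke the twisting principle of non-abelian Galois cohomology. Given a class $\xi \in H^1(\overline{K}/K, A)$ represented by a cocycle $a$, one may twist the action of $A$ on the homogeneous space $A/N$ by $a$ to obtain a $K$-variety ${}_\xi(A/N)$, which geometrically is again a homogeneous space under the inner form ${}_\xi A$ of $A$. The standard output of this construction is the equivalence
\[ \xi \in \operatorname{im}\bigl(H^1(\overline{K}/K, N) \to H^1(\overline{K}/K, A)\bigr) \iff {}_\xi(A/N)(K) \neq \emptyset. \]
Hence surjectivity of the map is equivalent to the claim that the twisted variety ${}_\xi(A/N)$ has a $K$-rational point for every class $\xi$.

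Next I would make the geometry explicit. Since $A$ acts transitively by conjugation on the set of Cartan subgroups conjugate to $C$ over $\overline{K}$, with the stabilizer of $C$ being exactly $N$, the orbit map gives an $A$-equivariant identification of $A/N$ with the variety of Cartan subgroups of $A$; a $K$-point of $A/N$ is then precisely a Cartan subgroup of $A$ defined over $K$. Twisting transports this description to ${}_\xi A$: the $K$-points of ${}_\xi(A/N)$ correspond to the Cartan subgroups of the inner form ${}_\xi A$ that are defined over $K$. Since ${}_\xi A$ is itself a linear algebraic group over the perfect field $K$, it contains a maximal torus defined over $K$, and taking the connected centralizer of such a torus yields a Cartan subgroup of ${}_\xi A$ defined over $K$. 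This supplies the desired $K$-point of ${}_\xi(A/N)$ for every $\xi$, and surjectivity follows.

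The main obstacle is not any single computation but the amount of structure theory of algebraic groups that must be assembled: setting up the twisting formalism so that rational points genuinely detect membership in the image, verifying the identification of ${}_\xi(A/N)$ with the variety of Cartan subgroups of ${}_\xi A$, and above all invoking the existence of a maximal torus (equivalently a Cartan subgroup) defined over the non-algebraically-closed field $K$, which for perfect $K$ is the classical input underlying the whole statement. As all of these ingredients are standard, the cleanest route is to cite \cite[Chapter 3, Lemma 6]{serr02-1} directly.
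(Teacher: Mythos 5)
Your proposal is correct and lands in the same place as the paper, which proves this lemma purely by citing \cite[Chapter 3, Lemma 6]{serr02-1}. The sketch you supply (twisting, identifying the twisted quotient with the variety of Cartan subgroups of the inner form, and invoking the existence of a Cartan subgroup of a linear algebraic group over the perfect ground field) is an accurate reconstruction of the argument behind that reference, so nothing is missing.
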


	For any field $K \subset \C$, let us now fix an embedding $i:\Aut(\overline{\G}) \to \Aut(\n_{\G,c}^K)$ by
	\begin{equation}
		\label{eq:morphismI}
		i := \tilde{p}^{-1} \circ \overline{P} = \tilde{p}^{-1} \circ P \circ r.
	\end{equation}
	Recall that $\tilde{p}$ is the morphism that maps any graded automorphism of $\n_{\G,c}$ to its restriction to $V$ (see (\ref{eq:definitionptilde})), that $P$ is the morphism that maps a graph automorphism to the corresponding permutation matrix on $V$ w.r.t. the basis $S$ and that $r$ is the splitting morphism $r:\Aut(\overline{\G}) \to \Aut(\G)$ that was chosen in section \ref{sec:quotientGraph}. We can also define a map $\pi: \Aut(\n_{\G, c}^K) \to \Aut(\overline{\G})$ by
	\begin{equation}
		\label{eq:morphismPi}
		\pi := q \circ p
	\end{equation}
	where $q$ and $p$ are the morphisms as defined by (\ref{eq:definitionq}) and (\ref{eq:morphismp}). Note that $\pi$ defines a left inverse to $i$, indeed $\pi \circ i = q \circ p \circ \tilde{p}^{-1} \circ \overline{P} = q \circ \overline{P} = \Id$. 
	
	Now we are ready to calculate the Galois cohomology $H^1(\overline{K}/K, \Aut(\n_{\G, c}^{\overline{K}}))$. The subgroups $T_{\G,c}$ and $D_{\G,c}$ are first of all algebraic (over $\Q$) and thus can be defined over any subfield of $\C$ and second they are $\Gal(L/K)$-invariant when seen as subgroups of $\Aut(\n_{\G,c}^L)$. When working over the field $L$, the group $\GL(V)$ can be identified with the group $\GL_{|S|}(L)$ by using the basis of vertices $S$. In this way $\GL(V)$ becomes a $\Gal(L/K)$ group. The subgroup $G$ will be $\Gal(L/K)$-invariant and thus a $\Gal(L/K)$-group itself. This action of $\Gal(L/K)$ on $G$ is the same one you would get from the action on $T_{\G,c}$ via the group isomorphism $\tilde{p}:T_{\G,c} \to G$.
	
	\begin{theorem}
		\label{thm:GaloisCohAutnG}
		Let $K \subset \C$ be a field and endow $\Aut(\overline{\G})$ with the trivial $\Gal(\overline{K}/K)$-action. Then the map $i_\ast:H^1(\overline{K}/K, \Aut(\overline{\G})) \to H^1(\overline{K}/K, \Aut(\n_{\G, c}^{\overline{K}}))$ is a bijection with inverse $\pi_\ast$.
	\end{theorem}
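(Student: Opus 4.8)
The plan is to deduce both assertions at once from the relation $\pi\circ i=\Id_{\Aut(\overline{\G})}$. This relation gives $\pi_\ast\circ i_\ast=\Id$ on $H^1(\overline{K}/K,\Aut(\overline{\G}))$, so $i_\ast$ is automatically injective and $\pi_\ast$ is a left inverse; the whole content is therefore the surjectivity of $i_\ast$, equivalently the identity $i_\ast\circ\pi_\ast=\Id$ on $H^1(\overline{K}/K,\Aut(\n_{\G,c}^{\overline{K}}))$. To reach it I would first apply the Cartan reduction: since $D_{\G,c}$ is a Cartan subgroup with normalizer $N$, Lemma~\ref{lem:galoisCohCartanSubgroup} shows every class in $H^1(\overline{K}/K,\Aut(\n_{\G,c}^{\overline{K}}))$ is represented by a cocycle valued in $N$, and Lemma~\ref{lem:normalizerOfVertexDiag} gives $N\subseteq T_{\G,c}$. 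Writing $j:T_{\G,c}\hookrightarrow\Aut(\n_{\G,c}^{\overline{K}})$, $i'=\tilde p^{-1}\circ\overline P$ and $q'=q\circ\tilde p$, one has $i=j\circ i'$ and $\pi\circ j=q'$, so it suffices to prove the internal identity $i'_\ast\circ q'_\ast=\Id$ on $H^1(\overline{K}/K,T_{\G,c})$; indeed, for $[\rho]=j_\ast[\beta]$ with $\beta$ valued in $N$ this yields $[\rho]=j_\ast i'_\ast q'_\ast[\beta]=i_\ast(q'_\ast[\beta])$.

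Transporting along the $\Gal(\overline{K}/K)$-equivariant isomorphism $\tilde p:T_{\G,c}\xrightarrow{\cong}G$, the identity to be shown becomes $\overline P_\ast\circ q_\ast=\Id$ on $H^1(\overline{K}/K,G)$, so the task is to compute $H^1(\overline{K}/K,G)$. I would peel off the unipotent radical using the structure $G=G^0\rtimes\Aut(\overline{\G})$ with $G^0=M\rtimes\prod_{\lambda\in\Lambda}\GL(V_\lambda)$ recorded after Theorem~\ref{thm:structureThmG}. As $M$ is a connected unipotent group in characteristic zero, it carries a filtration with successive quotients $\mathbb{G}_a$, and Theorem~\ref{thm:CohAddGroup} together with d\'evissage gives $H^1(\overline{K}/N,M)=1$ for every finite intermediate extension $N/K$; the same vanishing holds for every twisted form of $M$, since these remain unipotent. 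Feeding this into the long exact sequence of non-abelian cohomology attached to the $\Gal(\overline{K}/K)$-equivariant split extension $1\to M\to G\to G/M\to 1$ makes the fibres of $H^1(\overline{K}/K,G)\to H^1(\overline{K}/K,G/M)$ singletons, so the quotient induces a bijection on $H^1$ compatible with the splitting.

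It then remains to analyse $G/M\cong\left(\prod_{\lambda\in\Lambda}\GL(V_\lambda)\right)\rtimes\Aut(\overline{\G})$, in which $\Aut(\overline{\G})$ acts purely by permuting the factors according to its action on $\Lambda$; note that $\GL(V_\lambda)$ and $\GL(V_\mu)$ are identified whenever $\mu$ lies in the orbit of $\lambda$, because then $|\lambda|=|\mu|$ as the weight $\Phi$ is preserved. This is exactly the shape of Theorem~\ref{thm:GalCohSemiDirectPerm}, and since $H^1(\overline{K}/N,\GL(V_\lambda))$ is trivial for every finite intermediate $N$ by the generalized Hilbert~90 (Theorem~\ref{thm:GenHilbert90}), that theorem yields a bijection $H^1(\overline{K}/K,\Aut(\overline{\G}))\cong H^1(\overline{K}/K,G/M)$ induced by the inclusion, with inverse the projection. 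Chasing the identifications, and using that $q$ factors as $G\to G/M\to\Aut(\overline{\G})$ while $\overline P$ lands in the complement of $M$, the composite $H^1(G)\cong H^1(G/M)\cong H^1(\Aut(\overline{\G}))$ is exactly $q_\ast$ with inverse $\overline P_\ast$, which is the identity $\overline P_\ast\circ q_\ast=\Id$ required above.

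The step I expect to be the main obstacle is the non-abelian d\'evissage across $M$: one must verify not only that $H^1(M)$ vanishes but that $H^1$ of every twisted form of $M$ occurring as a fibre vanishes, so that $H^1(G)\to H^1(G/M)$ is bijective and not merely surjective, and one must track the $\Gal(\overline{K}/K)$-equivariance and the identifications $\tilde p$, $\overline P$, $q$ carefully enough to recognize the final composite as $q_\ast$. By contrast, the Cartan reduction and the application of Theorem~\ref{thm:GalCohSemiDirectPerm} to $G/M$ should be comparatively formal once the structure theorem for $G$ is available.
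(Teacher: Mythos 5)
Your proposal is correct, and it opens exactly as the paper does: $\pi\circ i=\Id$ gives injectivity of $i_\ast$ and identifies $\pi_\ast$ as the only candidate inverse, and the Cartan reduction via Lemmas \ref{lem:galoisCohCartanSubgroup} and \ref{lem:normalizerOfVertexDiag} together with Theorem \ref{thm:GalCohSemiDirectPerm} and Hilbert 90 carries the rest. But the middle step is genuinely different. The paper never touches the unipotent radical $M$: it identifies the normalizer of the Cartan torus explicitly as $D_S\cdot P(\Aut(\G))$, observes that this subgroup is already contained in $\left(\prod_{\lambda\in\Lambda}\GL(V_\lambda)\right)\cdot\overline{P}(\Aut(\overline{\G}))$, and therefore only needs surjectivity of $H^1$ of that reductive-by-finite subgroup onto $H^1\left(\overline{K}/K,\Aut(\n_{\G,c}^{\overline{K}})\right)$, composed with the bijection from Theorem \ref{thm:GalCohSemiDirectPerm}; at no point is $H^1(\overline{K}/K,G)$ itself computed. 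You instead compute $H^1(\overline{K}/K,G)$ in full, which forces you to peel off $M$ by a non-abelian d\'evissage: vanishing of $H^1$ for connected unipotent groups in characteristic zero (and for all their twists, which remain unipotent), plus the fibre analysis for the split extension $1\to M\to G\to G/M\to 1$. That step is correct but imports machinery the paper deliberately sidesteps — it is telling that the paper states Theorem \ref{thm:CohAddGroup} but never needs it in this proof. What your route buys is the stronger internal statement $\overline{P}_\ast\circ q_\ast=\Id$ on all of $H^1(\overline{K}/K,G)$, i.e.\ a complete computation of the Galois cohomology of the graded automorphism group, whereas the paper's route is shorter and only ever needs a surjection through a subgroup small enough to avoid $M$. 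The obstacle you flag (tracking twists and identifications through the d\'evissage) is real but surmountable; the paper's choice of normalizer makes it unnecessary.
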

	
	\begin{proof}
		First note that since $\pi$ is a left inverse to the map $i$, we also get for the induced maps on the cohomology that $(\pi \circ p)_\ast \circ i_\ast = \Id$. This already proves that $i_\ast$ is injective and that if $i_\ast$ is a bijection, its inverse must be given by $\pi_\ast$. Let us now argue for the surjectivity of $i_\ast$.
		
		As discussed above, the subgroup of vertex diagonal automorphisms $D_{\G,c} \leq \Aut(\n_{\G, c}^{\overline{K}})$ is a maximal torus and in fact also a Cartan subgroup of $\Aut(\n_{\G,c}^{\overline{K}})$. By Lemma \ref{lem:normalizerOfVertexDiag}, the normalizer of $D_{\G,c}$ in $\Aut(\n_{\G,c}^{\overline{K}})$ is contained in $T_{\G,c}$ and since $\tilde{p}:T_{\G,c} \to p(T_{\G,c}) = G$ is an isomorphism of algebraic groups, we can look at the normalizer of $D_S = p(D_{\G,c})$ in $G$. This normalizer is given by $D_S \cdot \Aut(\G) \leq G$. Using Lemma \ref{lem:galoisCohCartanSubgroup} and the fact that subfields of $\C$ are always perfect, we thus find that the map
		\[ (\tilde{p}^{-1})_\ast: H^1\left(\overline{K}/K, D_S \cdot P\left(\Aut(\G)\right)\right) \to H^1\left(\overline{K}/K, \Aut\left(\n_{\G,c}^{\overline{K}}\right)\right) \] is
		surjective. But then it follows immediately from the inclusion $D_S \cdot P(\Aut(\G) \subset \left(\prod_{\lambda \in \Lambda} \GL(V_\lambda) \right) \cdot \overline{P}(\Aut(\overline{\G})$ that also the map
		\begin{equation}
			\label{eq:pInverseIndMapSurj}
			(\tilde{p}^{-1})_\ast: H^1\left(\overline{K}/K, \left(\prod_{\lambda \in \Lambda} \GL(V_\lambda) \right) \cdot \overline{P}(\Aut(\overline{\G})) \right) \to H^1\left(\overline{K}/K, \Aut\left(\n_{\G,c}^{\overline{K}}\right)\right)
		\end{equation}
		is surjective.
		
		Recall that in order to define the morphism $r$, we fixed an ordering of the vertices in each coherent component $\lambda = \{v_{\lambda, 1}, \ldots, v_{\lambda, |\lambda|}\}$. Let us also order the coherent components themselves $\Lambda = \{\lambda_1, \ldots, \lambda_n\}$ with $n = |\Lambda|$ and write $n_j = |\lambda_j|$. We have isomorphisms of groups $\GL_{n_j}( \overline{K}) \to \GL(V_{\lambda_j}): B \mapsto \overline{B}$, where $\overline{B}$ is the linear map which has matrix representation $B$ with respect to the basis $v_{{\lambda_j}, 1}, \ldots, v_{{\lambda_j}, n_j}$. Endow $\GL_{n_j}(\overline{K})$ with the coefficient-wise $\Gal(\overline{K}/K)$-action. Note that since we ordered the coherent components, the group $\Aut(\overline{\G})$ has an action on the set $\{1, \ldots, n\}$. Thus we can define the $\Gal(\overline{K}/K)$-group $\displaystyle \left( \prod_{j = 1}^n \GL_{n_j}(\overline{K}) \right) \rtimes \Aut(\overline{\G})$ in the same way as we did in section \ref{sec:galoisCohSemiDirectProd} for $A = \Aut(\overline{\G})$ and $G_j = \GL_{n_j}(\overline{K})$. As $\Gal(\overline{K}/K)$-groups, $\displaystyle \left( \prod_{j = 1}^n \GL_{n_j}(\overline{K}) \right) \rtimes \Aut(\overline{\G})$ and $\displaystyle \left(\prod_{\lambda \in \Lambda} \GL(V_\lambda) \right) \cdot \overline{P}(\Aut(\overline{\G}))$ are isomorphic by sending $((B_1, \ldots, B_{n}), \varphi)$ to $\overline{B}_1 \cdot \ldots \cdot \overline{B}_{n} \cdot \overline{P}(\varphi)$. By Theorem \ref{thm:GenHilbert90}, $H^1(L/K, \GL_{n_j}(L))$ is trivial for any Galois extension $L/K$ and thus we can apply Theorem \ref{thm:GalCohSemiDirectPerm} to get a bijection
		\[ \overline{P}_\ast:H^1(\overline{K}/K, \Aut(\overline{\G})) \to H^1\left(\overline{K}/K, \left(\prod_{\lambda \in \Lambda} \GL(V_\lambda) \right) \cdot \overline{P}(\Aut(\overline{\G})) \right). \]
		Composing this map with the surjective map from (\ref{eq:pInverseIndMapSurj}), we find that \[i_\ast:H^1(L/K, \Aut(\overline{\G})) \to H^1(\overline{K}/K, \Aut(\n_{\G, c}^{\overline{K}}))\]
		is surjective, which concludes the proof.
	\end{proof}
	
	Note that the set \(H^1 \left(\overline{K}/K, \Aut(\overline{\G})\right) \) is relatively easy to understand. It is equal to the set of equivalence classes of actions of $\Gal(\overline{K}/K)$ on $\overline{\G}$ by automorphisms of the quotient graph, where two actions are equivalent if they are conjugated by a fixed automorphism of $\overline{\G}$. Hence Theorem \ref{thm:GaloisCohAutnG} gives us a complete understanding of $H^1(\overline{K}/K, \Aut(\n_{\G, c}^{\overline{K}}))$ from only information of the graph $\G$.
	
	\section{Applications for real and rational forms}
	\subsection{Rational forms of $\n^\C_{\G,c}$ and $\n^\R_{\G,c}$}
	\label{sec:rationalForms}
	
	In this section we give a description of the rational forms of $\n_{\G,c}^\R$ and $\n^\C_{\G,c}$. Consider a Galois extension $L/K$ of subfields of $\C$ and a continuous morphism $\rho:\Gal(L/K) \to \Aut(\overline{\G})$. As discussed in previous section $i \circ \rho$ is a cocycle in $Z^1(L/K, \Aut(\n_{\G, c}^L))$ and thus by (\ref{eq:formByFixedVectors}) gives us a $K$-form of $\n_{\G, c}^L$. For notational purposes, we will simply denote this form as
	\[ \n_{\rho, c}^K = \{ v \in \n^{L}_{\G,c} \mid \forall \sigma \in \Gal(L/K): \, i(\rho_\sigma)(\asi{v}) = v  \} \]
	The following theorem now tells us that all rational forms of $\n_{\G, c}^\C$ arise in this way, and which ones will be rational forms of the real Lie algebra $\n_{\G,c}^\R$. Let us write $\tau \in \Gal(\Qbar/\Q)$ for the complex conjugation automorphism on $\Qbar$.
	
	\begin{theorem}
		\label{thm:rationalFromsGraphRandC}
		Let $\G$ be a simple undirected graph and $c > 1$. We have bijections
		\begin{equation}
			H^1\left(\Qbar/\Q, \Aut(\overline{\G})\right) \to \mathcal{F}_\Q\left(\n_{\G,c}^\C\right): [\rho] \mapsto \left[\n_{\rho,c}^\Q\right]
		\end{equation}
		and
		\begin{equation}
			\left\{ [\rho] \in H^1\left(\Qbar/\Q, \Aut(\overline{\G})\right) \,\, \Big| \tau \in \ker(\rho) \right\} \to \mathcal{F}_\Q\left(\n^\R_{\G,c}\right): [\rho] \mapsto \left[\n_{\rho,c}^\Q\right].
		\end{equation}
	\end{theorem}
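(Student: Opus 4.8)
The plan is to build both bijections out of the field-independent identification of Theorem \ref{thm:GaloisCohAutnG} together with the Galois descent dictionary of Section \ref{sec:GaloisCohomologyDefinitions}, and then to carve out the real forms by a restriction argument.

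For the first bijection I would simply compose two maps already known to be bijective. Applying (\ref{eq:GaloisDescentOverC}) to $\n_{\G,c}^{\Qbar}$ gives a bijection $H^1(\Qbar/\Q, \Aut(\n_{\G,c}^{\Qbar})) \to \mathcal{F}_\Q(\n_{\G,c}^\C)$, $[\eta] \mapsto [\n_\eta^\Q]$, and Theorem \ref{thm:GaloisCohAutnG} with $K = \Q$ gives the bijection $i_\ast : H^1(\Qbar/\Q, \Aut(\overline{\G})) \to H^1(\Qbar/\Q, \Aut(\n_{\G,c}^{\Qbar}))$, $[\rho] \mapsto [i \circ \rho]$. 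Their composite is a bijection, and since $\n_{\rho,c}^\Q$ is by definition exactly the form $\n_{i\circ\rho}^\Q$ attached to the cocycle $i \circ \rho$ via (\ref{eq:formByFixedVectors}), the composite sends $[\rho]$ to $[\n_{\rho,c}^\Q]$, which is the asserted map.

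For the second bijection I would identify, inside the domain of the first, the exact preimage of the subset $\mathcal{F}_\Q(\n_{\G,c}^\R) \subseteq \mathcal{F}_\Q(\n_{\G,c}^\C)$. Two preliminaries are worth recording first: the condition $\tau \in \ker(\rho)$ is invariant under the equivalence on $H^1(\Qbar/\Q, \Aut(\overline{\G}))$, because $\Aut(\overline{\G})$ carries the trivial Galois action so that cocycles are genuine group morphisms and equivalence is conjugation by a fixed element of $\Aut(\overline{\G})$, which preserves kernels; and $\mathcal{F}_\Q(\n_{\G,c}^\R) \subseteq \mathcal{F}_\Q(\n_{\G,c}^\C)$ since $\n_{\G,c}^\R \otimes_\R \C \cong \n_{\G,c}^\C$. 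By the equivalences (\ref{eq:equivalenceFormsOfRealLieAlgebra}) applied to the cocycle $i \circ \rho$, one has $[\n_{\rho,c}^\Q] \in \mathcal{F}_\Q(\n_{\G,c}^\R)$ if and only if $\omega([i\circ\rho]) = [1]$ in $H^1(\C/\R, \Aut(\n_{\G,c}^\C))$, where $\omega([i\circ\rho]) = [i \circ \rho \circ \nu]$.

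The crux is to translate the vanishing of $\omega([i\circ\rho])$ into a condition on $\rho$ alone, and here I would invoke Theorem \ref{thm:GaloisCohAutnG} a second time, now for $\overline{\R} = \C$, so that $\pi_\ast : H^1(\C/\R, \Aut(\n_{\G,c}^\C)) \to H^1(\C/\R, \Aut(\overline{\G}))$ is a bijection. Since $i$ and $\pi$ are assembled from field-independent data (permutation matrices and the decomposition of $G$), they commute with scalar extension, so $i \circ \rho \circ \nu$ is the same cocycle whether computed over $\Qbar$ or over $\C$, and $\pi \circ i = \Id$ gives $\pi_\ast([i\circ\rho\circ\nu]) = [\rho \circ \nu]$. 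As $\pi_\ast$ is injective and carries $[1]$ to $[1]$, the condition $\omega([i\circ\rho]) = [1]$ is equivalent to $[\rho \circ \nu] = [1]$ in $H^1(\C/\R, \Aut(\overline{\G}))$. Finally, since $\Gal(\C/\R)$ has order two with trivial action and its nontrivial element restricts under $\nu$ to $\tau$, the morphism $\rho \circ \nu$ is trivial up to conjugation precisely when $\rho(\tau) = e$, i.e. $\tau \in \ker(\rho)$. Hence the preimage of $\mathcal{F}_\Q(\n_{\G,c}^\R)$ under the first bijection is exactly $\{[\rho] \mid \tau \in \ker(\rho)\}$, and restricting the first bijection produces the second. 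The one genuinely delicate point is the compatibility of $i$ and $\pi$ over $\Qbar$ and over $\C$ with the restriction map $\omega$; the rest is a formal chase through the bijections established earlier.
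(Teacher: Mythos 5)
Your proposal is correct and follows essentially the same route as the paper: the first bijection is the composite of $i_\ast$ from Theorem \ref{thm:GaloisCohAutnG} with the Galois descent bijection (\ref{eq:GaloisDescentOverC}), and the second is obtained by pulling the condition $\omega([i\circ\rho])=[1]$ from (\ref{eq:equivalenceFormsOfRealLieAlgebra}) back along the identification of Theorem \ref{thm:GaloisCohAutnG} over $\R$, using the compatibility $i\circ(\rho\circ\nu)=(i\circ\rho)\circ\nu$. The paper phrases this via a commutative square with the two $i_\ast$ bijections as vertical arrows rather than via injectivity of $\pi_\ast$, but since $\pi_\ast$ is the inverse of $i_\ast$ these are the same argument.
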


	\begin{proof} 
		The first bijection follows immediately from combining Theorem \ref{thm:GaloisCohAutnG} for $K = \Q$, Theorem \ref{thm:GaloisDescentLieAlgebras} and the bijection derived in (\ref{eq:GaloisDescentOverC}). To prove the second bijection, we will apply the equivalence given in (\ref{eq:equivalenceFormsOfRealLieAlgebra}).
		
		If we view $\Qbar$ as a subfield of $\C$, we get a continuous morphism $\nu:\Gal(\C/\R) \to \Gal(\Qbar/\Q):\sigma \mapsto \sigma|_{\Qbar}$. This allows us to define the maps $\omega_1$ and $\omega_2$ with domain and codomain as in the diagram below and which send $[\rho]$ to $[\rho \circ \nu]$. Note that we have also two maps $i_\ast$ in the diagram, induced by the map $i:\Aut(\overline{\G}) \to \Aut(\n_{\G,c}^K)$ as defined in (\ref{eq:morphismI}) for a general field $K \subset \C$.
		\begin{equation}
			\begin{tikzcd}
				H^1(\Qbar/\Q, \Aut(\overline{\G})) \arrow[r, "\omega_1"] \arrow[d, "i_\ast"] &H^1(\C/\R, \Aut(\overline{\G})) \arrow[d, "i_\ast"]\\
				H^1(\Qbar/\Q, \Aut(\n_{\G, c}^{\Qbar})) \arrow[r, "\omega_2"] &H^1(\C/\R, \Aut(\n_{\G,c}^\C)) 
			\end{tikzcd}
		\end{equation}
		The diagram is commutative as $i \circ  (\rho \circ \nu) = (i \circ \rho) \circ \nu$ on the level of representatives. From the equivalence in (\ref{eq:equivalenceFormsOfRealLieAlgebra}) we know that the classes in $H^1(\Qbar/\Q, \Aut(\n^{\Qbar}_{\G,c}))$ that give a rational form of $\n^\R_{\G,c}$ are exactly the classes in the inverse image of the class of the trivial cocycle under $\omega_2$. Now since the diagram above commutes and since by Theorem \ref{thm:GaloisCohAutnG} the induced maps $i_\ast$ are bijections, we get that those classes $[\rho] \in \omega_1^{-1}([1])$ are exactly the ones for which $[\n_{\rho, c}^\Q]$ lies in $\mathcal{F}_\Q(\n^\R_{\G,c})$. 
	
		Since we have that
		\begin{align*}
			[\rho] \in \omega_1^{-1}([1]) &\Leftrightarrow [\rho \circ \nu] = [1]\\
			&\Leftrightarrow \exists \varphi \in \Aut(\overline{\G}): \forall \sigma \in \Gal(\C/\R): \rho_{\nu(\sigma)} = \varphi \Id_{\Lambda} \varphi^{-1}\\
			&\Leftrightarrow \rho_\tau = 1\\
			&\Leftrightarrow \tau \in \ker(\rho),
		\end{align*}
		this proves that the second map is a bijection.
	\end{proof}

	We are now ready to prove Theorem \ref{thm:injectiveVersionClassificationRationalForms} which can be seen as an `injective version' of the above theorem.
	
	\begin{proof}[Proof of Theorem \ref{thm:injectiveVersionClassificationRationalForms}]
		Take any rational form of $\n^\C_{\G,c}$. Then it follows from Theorem \ref{thm:rationalFromsGraphRandC} that up to $\Q$-isomorphism, the form is given by $\n_{\rho, c}^\Q$ for some continuous morphism $\rho:\Gal(\Qbar/\Q) \to \Aut(\overline{\G})$. Note that $\ker(\rho)$ is an open normal subgroup of $\Gal(\Qbar/\Q)$. By Theorem \ref{thm:GaloisCorrespondence} we get that $L_\rho := \Qbar^{\ker(\rho)}$ is a finite degree Galois extension of $\Q$ together with a natural isomorphism of groups $$\Gal(\Qbar/\Q)/\ker(\rho) \to \Gal(L_\rho/\Q): \sigma\ker(\rho)\mapsto \sigma|_\rho.$$ We therefore get an induced injective morphism of groups
 		\[ \overline{\rho}: \Gal(L_\rho/\Q) \cong \Gal(\Qbar/\Q)/\ker(\rho) \to \Aut(\overline{\G}) \]
		which gives a class $[\overline{\rho}] \in H^1(L_\rho/\Q, \Aut(\overline{\G}))$. Note that we have a natural injection $\n_{\G, c}^{L_\rho} \hookrightarrow \n_{\G,c}^{\Qbar}$ and that this injection restricts to a $\Q$-Lie algebra isomorphism $\n_{\overline{\rho}, c}^{\Q} \cong \n_{\rho, c}^\Q$. 
		Following Theorem \ref{thm:rationalFromsGraphRandC}, we also have that $[\n_{\overline{\rho}, c}^\Q] \in \mathcal{F}_\Q(\n^\R_{\G, c})$ if and only if $\tau \in \ker(\rho)$ and thus if and only if $L_\rho$ is a real extension of $\Q$. 
		
		For the second statement, if $\eta:\Gal(\Qbar/\Q)\to \Aut(\overline{\G})$ is another continuous morphism, we have the equivalences
		\begin{align*}
			\n_{\overline{\rho}, c}^\Q \cong \n_{\overline{\eta}, c}^\Q &\Leftrightarrow \n_{\rho, c}^\Q \cong \n_{\eta, c}^\Q\\
			&\Leftrightarrow [\rho] = [\eta]\\
			&\Leftrightarrow \ker(\rho) = \ker(\eta) \text{ and } [\overline{\rho}] = [\overline{\eta}]\\
			&\Leftrightarrow L_\rho = L_\eta \text{ and } [\overline{\rho}] = [\overline{\eta}].
		\end{align*} 
		At last, note that if $L'/\Q$ is any finite degree Galois extension and $\rho':\Gal(L'/\Q) \to \Aut(\overline{\G})$ is any injective group morphism, we can define the continuous morphism $\rho:\Gal(\Qbar/\Q) \to \Aut(\overline{\G}): \sigma \mapsto \rho'_{\sigma|_{L'}}$ which now satisfies $\overline{\rho} = \rho'$. This shows all that needed to be proven.
	\end{proof}

	The following example shows that Theorem \ref{thm:injectiveVersionClassificationRationalForms} can be used to simplify certain classifications of Lie algebras, especially for quadratic extensions.
	
	\begin{example}[Direct sum of two 3-dimensional Heisenberg Lie algebras]
		\label{ex:twoCopiesHeisenberg}
		Consider the graph $\G = (S, E)$ defined by $S = \{ \alpha_1, \alpha_2, \beta_1, \beta_2 \}$ and $E = \{ \{\alpha_1, \beta_1 \}, \{\alpha_2, \beta_2\}\}$. The set of coherent components is then given by $\Lambda = \{ \lambda_1 := \{ \alpha_1, \beta_1 \}, \lambda_2 := \{\alpha_2, \beta_2\} \}$. A figure of the graph and quotient graph are given below.
		\begin{figure}[H]
			\centering
			\begin{tikzpicture}[every loop/.style={}]
			\draw (0,0) -- (0,1);
			\draw (1,0) -- (1,1);
			\filldraw [black] (0,0) circle (2pt) node[left = 0.15] {$\beta_1$};
			\filldraw [black] (1,0) circle (2pt) node[right = 0.15] {$\beta_2$};
			\filldraw [black] (0,1) circle (2pt) node[left = 0.15] {$\alpha_1$};
			\filldraw [black] (1,1) circle (2pt) node[right = 0.15] {$\alpha_2$};
			\node at (0.5,1.5) {$\G$};

			\filldraw [black] (4,0.5) circle (2pt) node[anchor = east] {2\,} node[below = 0.15] {$\lambda_1$};
			\filldraw [black] (5,0.5) circle (2pt) node[anchor = west] {\,2} node[below = 0.15] {$\lambda_2$};
			\draw[cm={1.5 ,0 ,0 ,1.5 ,(4,0.5)}] (0,0)  to[in=50,out=130, loop] (0,0);
			\draw[cm={1.5 ,0 ,0 ,1.5 ,(5,0.5)}] (0,0)  to[in=50,out=130, loop] (0,0);
			\node at (4.5,1.5) {$\overline{\G}$};
			\end{tikzpicture}
		\end{figure}
		\noindent There are only two automorphisms of the quotient graph, namely the identity and $\varphi \in \Aut(\overline{\G})$ which is defined by $\varphi(\lambda_1) = \lambda_2$ and $\varphi(\lambda_2) = \lambda_1$. We can define the morphism $r:\Aut(\overline{\G}) \to \Aut(\G)$ by letting $r(\varphi)(\alpha_1) = \alpha_2$, $r(\varphi)(\alpha_2) = \alpha_1$, $r(\varphi)(\beta_1) = \beta_2$ and $r(\varphi)(\beta_2) = \beta_1$. The associated 2-step nilpotent Lie algebra $\n^L_\G$ is then isomorphic to a direct sum of two 3-dimensional Heisenberg Lie algebras with basis $\{\alpha_1, \alpha_2, \beta_1, \beta_2,\, \gamma_1, \gamma_2 \}$ where $\gamma_1 := [\alpha_1, \beta_1]$ and $\gamma_2 := [\alpha_2, \beta_2]$.
		
		It is clear that if $\Gal(L/\Q) \to \Aut(\overline{\G})$ is an injective group morphism, $L/\Q$ must have degree 2 or 1. All non-isomorphic degree 2 or 1 Galois extensions of $\Q$ are given by $\Q(\sqrt{d})$ for $d$ a square free non-zero integer. Note that if $d = 1$, $\Q(\sqrt{d}) = \Q$. For all square free non-zero integers $d$, let $\rho_d$ denote the uniquely determined injective group morphism $\rho_d:\Gal(\Q(\sqrt{d})/\Q) \to \Aut(\overline{\G})$. For simplicity, let us write $\n_{d}^\Q$ for the associated rational form $\n_{\rho_d, 2}^\Q$ of $\n_{\G, 2}^\C$. From Theorem \ref{thm:injectiveVersionClassificationRationalForms} we then get that the sets
		\[\{  \n_{d}^\Q \mid d \neq 0 \text{ square free} \} \quad \quad \text{and} \quad \quad \{  \n_{d}^\Q \mid d \geq 1 \text{ square free} \}\]
		give us a complete set of pairwise non-isomorphic rational forms of $\n^\C_{\G, 2}$ and $\n^\R_{\G, 2}$, respectively. Note that $\n_{1}^\Q \cong \n_{\G, 2}^\Q$ is the standard rational form of $\n^\C_{\G, 2}$. We thus get an alternative proof of \cite[Proposition 3.2.]{laur08-1} without using the Pfaffian form on $2$-step nilpotent Lie algebras.
		
		For non-zero square-free $d$, a basis for $\n^\Q_{d} \subset \n^\C_{\G, 2}$ can be given by 
		\begin{alignat*}{3}
		X_1 &:= \alpha_1 + \alpha_2 & \quad \quad Y_1 &:= \beta_1 + \beta_2 & \quad \quad Z_1 &:= \gamma_1 + \gamma_2\\
		X_2 &:= \sqrt{d}(\alpha_1 - \alpha_2) & Y_2 &:= \sqrt{d}(\beta_1 - \beta_2) & Z_2 &:= \sqrt{d}(\gamma_1 - \gamma_2).
		\end{alignat*}
		The bracket relations of the rational Lie algebra $\n^\Q_{d}$ in this basis are then given by
		\begin{alignat*}{2}
		[X_1, Y_1] &= Z_1 &\quad \quad  [X_2, Y_1] &= Z_2\\
		[X_1, Y_2] &= Z_2 &\quad \quad  [X_2, Y_2] &= d\, Z_1. 
		\end{alignat*}	
	\end{example}

	We can also consider the complement graph.

	\begin{example}
		\label{ex:twoCopiesHeisenbergComplement}
		Let $\G = (S,E)$ be the graph from Example \ref{ex:twoCopiesHeisenberg} and $\G^\ast$ its complement graph, i.e. $\G^\ast = (S, E^\ast)$ with $E^\ast = \{ \{\alpha, \beta\} \mid \alpha, \beta \in S, \alpha \neq \beta, \{\alpha, \beta \} \notin E \}$. A figure of $\G^\ast$ and its quotient graph are given below.
		\begin{figure}[H]
			\centering
			\begin{tikzpicture}[every loop/.style={}]
				\draw (0,0) -- (1,1);
				\draw (0,0) -- (1,0);
				\draw (0,1) -- (1,0);
				\draw (0,1) -- (1,1);
				\filldraw [black] (0,0) circle (2pt) node[left = 0.15] {$\beta_1$};
				\filldraw [black] (1,0) circle (2pt) node[right = 0.15] {$\beta_2$};
				\filldraw [black] (0,1) circle (2pt) node[left = 0.15] {$\alpha_1$};
				\filldraw [black] (1,1) circle (2pt) node[right = 0.15] {$\alpha_2$};
				\node at (0.5,1.5) {$\G^\ast$};

				\draw (4,0.5) -- (5,0.5);
				\filldraw [black] (4,0.5) circle (2pt) node[anchor = east] {2\,} node[below = 0.15] {$\lambda_1$};
				\filldraw [black] (5,0.5) circle (2pt) node[anchor = west] {\,2} node[below = 0.15] {$\lambda_2$};
				\node at (4.5,1.5) {$\overline{\G^\ast}$};
			\end{tikzpicture}
		\end{figure}
		\noindent Since $\Aut(\overline{\G^\ast}) = \Aut(\overline{\G})$, it follows that the only injective group morphisms $\Gal(L/\Q) \to \Aut(\G_{\rd}^\ast)$ are the morphisms $\rho_d: \Gal(\Q(\sqrt{d})/\Q) \to \Aut(\G_{\rd})$ from Example \ref{ex:twoCopiesHeisenberg} where $d$ is any square free non-zero integer. Again, let us simply write $\n_{d, \ast}^\Q$ for the associated rational form $\n^\Q_{\rho_d, 2}$ of $\n^\C_{\G^\ast, 2}$. From Theorem \ref{thm:injectiveVersionClassificationRationalForms} it thus follows that the sets
		\[\{  \n_{d, \ast}^\Q \mid d \neq 0 \text{ square free} \} \quad \quad \text{and} \quad \quad \{  \n_{d, \ast}^\Q \mid d \geq 1 \text{ square free} \}\]
		give us a complete set of pairwise non-isomorphic rational forms of $\n^\C_{\G^\ast, 2}$ and $\n^\R_{\G^\ast, 2}$, respectively. A basis for $\n^\C_{\G^\ast, 2}$ can be given by $\{ \alpha_1, \alpha_2, \beta_1, \beta_2, \gamma_1, \gamma_2, \gamma_3, \gamma_4 \}$ where $\gamma_1 := [\alpha_1, \beta_2]$, $\gamma_2 := [\alpha_2, \beta_1]$, $\gamma_3 = [\alpha_1, \alpha_2]$, $\gamma_4 = [\beta_1, \beta_2]$. For non-zero square-free $d$, a basis for the form $\n^\Q_{d, \ast}$ can then be given by:
		\begin{alignat*}{4}
			X_1 &:= \alpha_1 + \alpha_2 & \quad \quad Y_1 &:= \beta_1 + \beta_2 & \quad \quad Z_1 &:= \gamma_1 + \gamma_2 & \quad \quad Z_3 &= -2\sqrt{d} \gamma_3\\
			X_2 &:= \sqrt{d}(\alpha_1 - \alpha_2) & Y_2 &:= \sqrt{d}(\beta_1 - \beta_2) & Z_2 &:= \sqrt{d}(\gamma_2 - \gamma_1) & \quad \quad Z_4 &= -2\sqrt{d} \gamma_4.
		\end{alignat*}
		The bracket relations of the rational Lie algebra $\n^\Q_{d, \ast}$ in this basis are then given by
		\begin{alignat*}{2}
			[X_1, X_2] &= Z_3 \quad \quad & [X_2, Y_1] &= Z_2\\
			[X_1, Y_1] &= -Z_1 \quad \quad & [X_2, Y_2] &= dZ_1\\
			[X_1, Y_2] &= -Z_2 \quad \quad & [Y_1, Y_2] &= Z_4.
		\end{alignat*}
		We thus get an alternative proof of \cite[Proposition 4.5.]{laur08-1}.
	\end{example}

	\subsection{Number of different $\Q$-forms}
	
	We apply Theorem \ref{thm:injectiveVersionClassificationRationalForms} to show that the Lie algebras $\n_{\G,c}^\R$ and  $\n^\C_{\G,c}$ have either exactly one or infinitely many rational forms. In order to prove this, we need a lemma that ensures the existence of enough non-isomorphic cyclic Galois extensions of a certain degree.
	
	\begin{lemma}
		\label{lem:infinitelyManyPrimeGaloisExt}
		For every positive integer $d > 1$, there exist infinitely many real Galois extensions $L_i/\Q$ with $i \in \N$ such that $\Gal(L_i/\Q) \cong \Z/d\Z$ and $L_i \cap L_j = \Q$ for any $i,j \in \N$ with $i \neq j$.
	\end{lemma}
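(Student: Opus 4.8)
The plan is to realize each cyclic group $\Z/d\Z$ as the Galois group of a subfield of a cyclotomic field, and to use Dirichlet's theorem on primes in arithmetic progressions to manufacture infinitely many such fields that are pairwise linearly disjoint. First I would invoke Dirichlet's theorem to fix an infinite sequence of distinct primes $p_1, p_2, \ldots$ all congruent to $1$ modulo $2d$. For each such prime $p = p_i$, recall that $\Gal(\Q(\zeta_p)/\Q) \cong (\Z/p\Z)^\times$ is cyclic of order $p - 1$, and by the congruence we have $2d \mid p - 1$.

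Next I would pass to the maximal real subfield $\Q(\zeta_p)^+ = \Q(\zeta_p + \zeta_p^{-1})$, which is the fixed field of complex conjugation, the unique element of order two in the cyclic group $\Gal(\Q(\zeta_p)/\Q)$. Hence $\Q(\zeta_p)^+/\Q$ is cyclic of degree $(p-1)/2$, and since $d \mid (p-1)/2$ there is a \emph{unique} subfield $L_p \subseteq \Q(\zeta_p)^+$ with $[L_p : \Q] = d$. As a subextension of an abelian extension it is automatically Galois over $\Q$, with $\Gal(L_p/\Q) \cong \Z/d\Z$. Crucially, $L_p \subseteq \Q(\zeta_p)^+ \subset \R$, so each $L_p$ is real.

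Finally I would verify the disjointness condition. Since $L_{p_i} \subseteq \Q(\zeta_{p_i})$ and distinct primes $p_i \neq p_j$ satisfy $\gcd(p_i, p_j) = 1$, the standard intersection formula $\Q(\zeta_m) \cap \Q(\zeta_n) = \Q(\zeta_{\gcd(m,n)})$ for cyclotomic fields gives $\Q(\zeta_{p_i}) \cap \Q(\zeta_{p_j}) = \Q(\zeta_1) = \Q$, whence $L_{p_i} \cap L_{p_j} \subseteq \Q(\zeta_{p_i}) \cap \Q(\zeta_{p_j}) = \Q$. Relabelling $L_i := L_{p_i}$ then yields the desired family.

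The main obstacle is the reality requirement: an arbitrary cyclic subextension of degree $d$ inside $\Q(\zeta_p)$ need not be real, since whether it lies in the real subfield is governed by whether complex conjugation belongs to the corresponding subgroup of $\Gal(\Q(\zeta_p)/\Q)$. The cleanest way around this is precisely to demand $p \equiv 1 \pmod{2d}$, so that the entire degree-$d$ extension already sits inside $\Q(\zeta_p)^+$. Beyond the elementary Galois theory of cyclotomic fields, the only external input needed is Dirichlet's theorem guaranteeing infinitely many primes in the progression $1 \pmod{2d}$.
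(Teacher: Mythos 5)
Your proposal is correct and follows essentially the same route as the paper: Dirichlet's theorem for primes $p \equiv 1 \pmod{2d}$, the cyclic structure of $\Gal(\Q(\zeta_p)/\Q)$ to extract a real degree-$d$ subfield, and the cyclotomic intersection formula for disjointness. The only cosmetic difference is that you descend via the maximal real subfield $\Q(\zeta_p)^+$ while the paper first passes to the degree-$2d$ subfield and then quotients by complex conjugation; both constructions produce the same fields.
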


	\begin{proof}
		By Dirichlet's theorem \cite{diri13-1}, there are infinitely many different primes $p_i$ for $i \in \N$ such that $p_i = 1 \mod 2d$ for all $i \in \N$. If we denote by $\zeta_k = e^{2\pi i/k} $ the primitive $k$-th root of unity, we can define the cyclotomic field extensions $K_i = \Q(\zeta_{p_i})$, for which the Galois group $\Gal(K_i/\Q)$ is cyclic of order $p_i - 1$. Since $2d \mid  p_i - 1$, there exists a (unique) cyclic subgroup $H_i \subset \Gal(K_i/\Q)$ of index $2d$. Let $K_i^{H_i}$ denote the field which is fixed under $H_i$. Since $\Gal(K_i/ \Q)$ is abelian, $H_i$ is a normal subgroup and thus we have $\Gal(K_i^{H_i}/\Q) \cong \frac{\Gal(K_i/\Q)}{H_i} \cong \Z/2d\Z$. 
		
		Note that $\Gal(K_i^{H_i}/\Q)$ has a unique element $\sigma$ of order 2 and in case $K_i^{H_i}$ is not totally real, this must be the complex conjugation automorphism. Let $L_i$ be the subfield of $K_i^{H_i}$ which is fixed by $\{ 1, \sigma \}$. As before we have that $\{ 1, \sigma \}$ is a normal subgroup of $\Gal(K_i^{H_i}/\Q)$ and thus that $\Gal(L_i/\Q) \cong \frac{\Gal(K_i^{H_i}/\Q)}{\{1, \sigma\}} \cong \Z/d\Z$. Note that, even if $K_i$ was not totally real, the fields $L_i$ must be real since they are fixed by complex conjugation. We have thus constructed infinitely many real Galois extensions $L_i/\Q$, $i \in \N$ such that $\Gal(L_i/\Q) \cong \Z/d\Z$. Moreover since for $i \neq j$, the primes $p_i$ and $p_j$ are different, we know that $K_i \cap K_j = \Q(\zeta_{p_i}) \cap \Q(\zeta_{p_j}) = \Q(\zeta_{\text{gcd}(p_i, p_j)}) = \Q$. By consequence also $L_i \cap L_j = \Q$. 
	\end{proof}
	
	\begin{theorem}
		\label{thm:numberofrational}
		The Lie algebras $\n^\R_{\G,c}$ and $\n^\C_{\G,c}$ associated to a simple undirected graph $\G$ have either exactly one or infinitely many rational forms up to $\Q$-isomorphism. The former is true if and only if $\Aut(\overline{\G})$ is trivial.
	\end{theorem}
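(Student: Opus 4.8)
The plan is to deduce everything from the classification in Theorem~\ref{thm:injectiveVersionClassificationRationalForms}, which identifies the rational forms of $\n_{\G,c}^\C$ with pairs $(L,\rho)$ consisting of a finite degree Galois extension $L/\Q$ and an injective morphism $\rho:\Gal(L/\Q) \to \Aut(\overline{\G})$, taken up to the equivalence requiring both $L = K$ and conjugacy of the morphisms in $\Aut(\overline{\G})$; the real forms correspond precisely to those pairs with $L \subset \R$. First I would dispose of the easy direction. If $\Aut(\overline{\G})$ is trivial, then an injective morphism $\rho:\Gal(L/\Q) \to \Aut(\overline{\G})$ forces $\Gal(L/\Q)$ to be trivial, hence $L = \Q$. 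Thus the only rational form is the standard one $\n_{\G,c}^\Q$ (corresponding to $L = \Q$ and the trivial morphism), and since $\Q \subset \R$ this single form serves as a rational form of both $\n_{\G,c}^\C$ and $\n_{\G,c}^\R$. This shows that triviality of $\Aut(\overline{\G})$ implies exactly one rational form in each case.

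For the converse, suppose $\Aut(\overline{\G})$ is nontrivial. Being a nontrivial finite group, it contains by Cauchy's theorem an element $g$ of some prime order $p$, hence a cyclic subgroup $\langle g \rangle \cong \Z/p\Z$. I would then invoke Lemma~\ref{lem:infinitelyManyPrimeGaloisExt} with $d = p$ to obtain infinitely many real Galois extensions $L_i/\Q$, $i \in \N$, with $\Gal(L_i/\Q) \cong \Z/p\Z$ and $L_i \cap L_j = \Q$ whenever $i \neq j$. For each $i$ one defines an injective morphism $\rho_i:\Gal(L_i/\Q) \to \Aut(\overline{\G})$ by sending a chosen generator of $\Gal(L_i/\Q)$ to $g$; injectivity holds precisely because $g$ has order $p$. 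Each $\rho_i$ yields a rational form $\n_{\rho_i,c}^\Q$ of $\n_{\G,c}^\C$, which is moreover a rational form of $\n_{\G,c}^\R$ since $L_i \subset \R$.

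It remains to check that these forms are pairwise non-isomorphic. By the isomorphism criterion of Theorem~\ref{thm:injectiveVersionClassificationRationalForms}, the relation $\n_{\rho_i,c}^\Q \cong \n_{\rho_j,c}^\Q$ can only hold when $L_i = L_j$. But for $i \neq j$ we have $L_i \cap L_j = \Q$ while $[L_i : \Q] = p > 1$, so $L_i = L_j$ would force $L_i = L_i \cap L_j = \Q$, a contradiction; hence $L_i \neq L_j$ and the forms are distinct. This produces infinitely many pairwise non-isomorphic rational forms of both $\n_{\G,c}^\C$ and $\n_{\G,c}^\R$, which together with the easy direction yields the dichotomy and the stated characterization. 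The only point requiring genuine care---and the main obstacle---is ensuring an abundance of suitable base fields: one needs not merely many cyclic extensions of degree $p$, but extensions that pairwise intersect in $\Q$, so that the necessary condition $L_i = L_j$ of the isomorphism criterion already separates the forms without any analysis of conjugacy in $\Aut(\overline{\G})$. This is exactly the content secured by Lemma~\ref{lem:infinitelyManyPrimeGaloisExt}, so once that lemma is in hand the argument reduces to the bookkeeping above.
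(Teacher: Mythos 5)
Your proposal is correct and follows essentially the same route as the paper: both directions use the classification of Theorem \ref{thm:injectiveVersionClassificationRationalForms}, and the infinite family is produced exactly as in the paper by taking an element of prime order $p$ in $\Aut(\overline{\G})$ and the pairwise-disjoint real cyclic extensions of degree $p$ supplied by Lemma \ref{lem:infinitelyManyPrimeGaloisExt}. Your explicit remark that only the distinctness of the fields $L_i$ (guaranteed by $L_i \cap L_j = \Q$ and $[L_i:\Q]=p>1$) is needed to separate the forms, with no conjugacy analysis required, is a slightly more careful spelling-out of the step the paper states in one line.
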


	\begin{proof}
		If $\Aut(\overline{\G})$ is trivial, then clearly $H^1(\Qbar \cap \R/\Q, \Aut(\overline{\G}))$ is trivial as well which implies by the discussion above that both $\mathcal{F}_{\Q}(\n^\R)$ and $\mathcal{F}_{\Q}(\n^\C)$ count only one element. So from now on we assume that $\Aut(\overline{\G})$ is non-trivial and show that there infinitely many rational forms. 

		Since $\Aut(\overline{\G})$ is not trivial, there exists an element $\varphi \in \Aut(\overline{\G})$ of prime order $p$. Let $L_i$ with $i \in \N$ be the finite degree Galois extensions of $\Q$ with Galois group $\Z/p\Z$ as in Lemma \ref{lem:infinitelyManyPrimeGaloisExt}. Choose for all $i \in \N$ a generator $\sigma_i \in \Gal(L_i/\Q)$ and define the injective morphisms
		\[\rho_i':\Gal(L_i/\Q) \to \Aut(\overline{\G}): \sigma_i^k \mapsto \varphi^k. \]
		The fields $L_i$ are all different and hence the corresponding rational forms are non-isomorphic by Theorem \ref{thm:injectiveVersionClassificationRationalForms}.
		Since each $L_i$ is a real field, complex conjugation lies in the kernel of each $\rho_i$ and thus $\n_{\rho_i, c}^\Q$ is a rational form of $\n^\R_{\G,c}$ for all $i \in \N$. This proves that $\mathcal{F}_\Q(\n^\R_{\G,c})$ counts infinitely many elements. Because we have an injection $\mathcal{F}_\Q(\n^\R_{\G,c}) \to \mathcal{F}_\Q(\n_{\G,c}^\C):[\m^\Q] \mapsto [\m^\Q]$, this proves as well that $\mathcal{F}_\Q(\n^\C_{\G,c})$ counts infinitely many elements.
	\end{proof}
	
	As a consequence, we present a family of graphs such that the corresponding real and complex Lie algebras have a unique rational form, up to $\Q$-isomorphism.	
	
	\begin{example}
		Let $p, q$ be two non-negative integers with $q > 1$. Take two disjoint sets $S_1$ and $S_2$ which have cardinalities $p$ and $q$, respectively. We can define a simple undirected graph $\G = (S_1 \cup S_2, E)$ where $E = \{ \{\alpha, \beta\} \mid \alpha \in S_1, \beta \in S_2 \} \cup \{ \{\alpha, \beta\} \mid \alpha, \beta \in S_1, \alpha \neq \beta \}$. These type of graphs are called \textit{magnet graphs} and we say $S_1$ is the \textit{core} of $\G$. The quotient graph of $\G$ is equal to
		\[ 
		\begin{tikzpicture}[every loop/.style={}]
			\draw (0,0) -- (1,0);
			\filldraw [black] (0,0) circle (2pt) node[anchor = east] {p\,};
			\draw[scale = 1.5] (0,0)  to[in=50,out=130, loop] (0,0);
			\filldraw [black] (1,0) circle (2pt) node[anchor = west] {\,q};
		\end{tikzpicture}.
		\]
		From this it is clear that $\Aut(\overline{\G})$ is trivial and thus that the $c$-step nilpotent Lie algebras over $\R$ (or $\C$) which are associated to magnet graphs have only one rational form up to $\Q$-isomorphism. Note that these graphs were also considered in \cite{dm05-1} in the study of Anosov diffeomorphisms.
	\end{example}

	\begin{question}
		Does Theorem \ref{thm:numberofrational} hold for all real and complex (nilpotent) Lie algebras? The authors do not know any example of a (nilpotent) Lie algebra having at least two non-isomorphic rational forms, but only a finite number of them.
	\end{question}

	\subsection{Real forms of $\n_{\G, c}^\C$}
	\label{sec:realForms}
	
	In this section we prove Theorem \ref{thm:ClassificationOfRealForms} and apply it to some examples. Recall that if $G$ is a group, an element $g \in G$ is said to be an \textit{involution} of $G$ if $g^2 = 1$. Note that the neutral element $e \in G$ in particular is considered as an involution. 
	
	\begin{proof}[Proof of Theorem \ref{thm:ClassificationOfRealForms}]
		Combining Theorem \ref{thm:GaloisCohAutnG} for $K = \R$ and Theorem \ref{thm:GaloisDescentLieAlgebras} we get a bijection
		\[ H^1(\C/\R, \Aut(\overline{\G})) \to \mathcal{F}_\R(\n^\C_{\G, c}): [\rho] \mapsto [\n_{\rho, c}^\R]. \]
		Let $\tau \in \Gal(\C/\R)$ denote the complex conjugation automorphism. Then it is clear that any $\rho \in Z^1(\C/\R, \Aut(\overline{\G}))$ is determined by the involution $\rho(\tau)$. Conversely, every involution of $\varphi \in \Aut(\overline{\G})$ gives a unique morphism $\rho_\varphi:\Gal(\C/\R) \to \Aut(\overline{\G}): 1 \mapsto \Id,\, \tau \mapsto \varphi$. If we write $\n_{\varphi, c}^\R$ for the real form $\n_{\rho_\varphi, c}^\R$, then the statements in the theorem follow immediately.
	\end{proof}
	
	The following fact about symmetric groups is easily verified and thus we omit the proof.
	
	\begin{lemma}
		\label{lem:involutionsOfSn}
		For any $k \in \Z^{>0}$, the number of involutions up to conjugation in the symmetric group $\Perm(X)$ with $|X| = 2k$ or $|X| = 2k+1$ is equal to $k+1$.
	\end{lemma}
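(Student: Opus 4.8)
The plan is to use the standard description of conjugacy classes in the symmetric group in terms of cycle type. First I would recall that a permutation $g \in \Perm(X)$ is an involution, i.e.~$g^2 = \Id$, precisely when every cycle in its disjoint cycle decomposition has length dividing $2$; hence an involution is exactly a product of pairwise disjoint transpositions together with some fixed points.

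Next, I would invoke the classical fact that two permutations of a finite set are conjugate in $\Perm(X)$ if and only if they have the same cycle type, so that the conjugacy classes of $\Perm(X)$ are in bijection with the partitions of $n := |X|$. Restricting to involutions, the only cycle lengths that occur are $1$ and $2$, and therefore the cycle type of an involution is completely determined by the single integer $j$ counting its transpositions, where $j$ ranges over $0 \leq j \leq \lfloor n/2 \rfloor$ (the remaining $n - 2j$ points being fixed). Here the value $j = 0$ corresponds to the identity, which, as noted before the statement, is counted among the involutions.

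Finally, I would simply count the admissible values of $j$, obtaining $\lfloor n/2 \rfloor + 1$. For $n = 2k$ this equals $k + 1$, and for $n = 2k+1$ one has $\lfloor (2k+1)/2 \rfloor + 1 = k + 1$ as well, which proves the claim in both cases. I expect no genuine obstacle in this argument; the only point requiring care is to remember that the identity (the case $j = 0$) is an involution and must be included, since it is precisely this term that produces the summand $+1$.
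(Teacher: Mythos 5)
Your argument is correct and is precisely the standard verification the paper has in mind: the authors omit the proof as "easily verified," and the cycle-type count (conjugacy classes of involutions indexed by the number $j$ of disjoint transpositions, $0 \leq j \leq \lfloor n/2 \rfloor$, including $j=0$ for the identity) is exactly that verification. You also correctly note that the identity must be counted, matching the paper's convention that the neutral element is an involution.
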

	
	In what follows, we will give examples of graphs for which we discuss the real forms of the associated complex Lie algebra.

	\begin{example}[$n$-fold direct sum of Heisenberg Lie algebras]
		Let $n > 1$ be an integer and let $\G_n = (X_n, E_n)$ denote the graph defined by $X_n = \{ 1, \ldots, 2n\}$ and $E_n = \{ \{2i-1, 2i\} \mid 1 \leq i \leq n \}$ as drawn below.
		\usetikzlibrary{patterns,snakes}
		\begin{figure}[H]
			\centering
			\begin{tikzpicture}
				\draw (-2, -1) -- (-2, 0);
				\draw (-1, -1) -- (-1, 0);
				\draw (1, -1) -- (1, 0);
				\draw (2, -1) -- (2, 0);
				
				\filldraw [black] (-2, 0) circle (2pt);
				\filldraw [black] (-1, 0) circle (2pt);
				\filldraw [black] (1, 0) circle (2pt);
				\filldraw [black] (2, 0) circle (2pt);
				
				\filldraw [black] (-2, -1) circle (2pt);
				\filldraw [black] (-1, -1) circle (2pt);
				\filldraw [black] (1, -1) circle (2pt);
				\filldraw [black] (2, -1) circle (2pt);
				
				\node at (0, -1) {$\ldots$};
				\node[cm={0,2.5,-12,0,(0,-1.5)}] at (0, 0) {$\{$};
				\node at (0, -2) {$n$ times};
				\node at (0, 0.5) {$\G_n$};
			\end{tikzpicture}
		\end{figure}
		If we let $\mathfrak{h}_3^K$ denote the 3-dimensional Heisenberg Lie algebra over the field $K$, then it is not hard to see that for $c = 2$ we get a decomposition
		\[ \n_{\G_n, 2}^\C \cong \underbrace{\mathfrak{h}_3^\C \oplus \ldots \oplus \mathfrak{h}_3^\C}_{n\text{ times}}.\]
		The set of coherent components of $\G_n$ is given by $\Lambda_n = \{\{2i-1, 2i\} \mid 1 \leq i \leq n\}$. It is straightforward to verify that the automorphism group $\Aut(\overline{\G_n})$ is isomorphic to the permutation group on a set with $n$ elements. By lemma \ref{lem:involutionsOfSn}, we thus find for any integers $c > 1$ and $k \geq 1$ that the number of real forms of $\n_{\G_n, c}^\C$ is equal to $k+1$ both for $n = 2k$ and $n = 2k+1$. In particular, we can achieve every natural number $ \geq 1$.
		
		In the special case of $c = 2$, we can even describe the Lie algebras explicitly. Let $\n_{-1, \ast}^\Q$ be the rational Lie algebra as defined in Example \ref{ex:twoCopiesHeisenbergComplement} and write $\n_{-1, \ast}^\R = \n_{-1, \ast}^\Q \otimes_\Q \R$. Note that $\n_{-1,\ast}^\R$ is a $6$-dimensional real Lie algebra that is also isomorphic to the underlying real Lie algebra of $\mathfrak{h}_3^\C$, so by restricting scalar multiplication on $\mathfrak{h}_3^\C$ to the real numbers. Every real form of $\n_{\G_n, 2}^\C$ is isomorphic to a direct sum Lie algebra of the form
		\[ \underbrace{\mathfrak{h}_3^\R \oplus \ldots \oplus \mathfrak{h}_3^\R}_{k\text{ times}} \: \oplus \: \underbrace{\mathfrak{n}_{-1, \ast}^\R \oplus \ldots \oplus \mathfrak{n}_{-1, \ast}^\R}_{l\text{ times}} \]
		for some non-negative integers $k, l$ which satisfy $k + 2l = n$.
	\end{example}

	In the example above, the real forms arise from permuting the summands in the direct sum decomposition. To show that any number of real forms can also be present in an indecomposable Lie algebra (see section \ref{sec:indecomposableForms}), we give the following example. 

	\begin{example}
		Let $n > 1$ be an integer and let $\mathcal{T}_n = (Y_n, F_n)$ denote the graph defined by $Y_n = \{ 1, \ldots, 2n + 1\}$ and $F_n = \{ \{1, \,2i\} \mid 1 \leq i \leq n \} \cup \{ \{2i, 2i + 1\} \mid 1 \leq i \leq n \}$ as drawn below.
		\usetikzlibrary{patterns,snakes}
		\begin{figure}[H]
			\centering
			\begin{tikzpicture}
				\draw (0, 1) -- (-2, 0);
				\draw (0, 1) -- (-1, 0);
				\draw (0, 1) -- (1, 0);
				\draw (0, 1) -- (2, 0);
				
				\draw (-2, -1) -- (-2, 0);
				\draw (-1, -1) -- (-1, 0);
				\draw (1, -1) -- (1, 0);
				\draw (2, -1) -- (2, 0);
				
				\filldraw [black] (0, 1) circle (2pt);
				
				\filldraw [black] (-2, 0) circle (2pt);
				\filldraw [black] (-1, 0) circle (2pt);
				\filldraw [black] (1, 0) circle (2pt);
				\filldraw [black] (2, 0) circle (2pt);
				
				\filldraw [black] (-2, -1) circle (2pt);
				\filldraw [black] (-1, -1) circle (2pt);
				\filldraw [black] (1, -1) circle (2pt);
				\filldraw [black] (2, -1) circle (2pt);
				
				\node at (0, -1) {$\ldots$};
				\node[cm={0,2.5,-12,0,(0,-1.5)}] at (0, 0) {$\{$};
				\node at (0, -2) {$n$ times};
				\node at (0, 1.5) {$\mathcal{T}_n$};
			\end{tikzpicture}
		\end{figure}
		The set of coherent components is given by all singletons $\Lambda_n = \{\{i\} \mid 1 \leq i \leq 2n +1\}$. It is straightforward to verify that the automorphism group $\Aut(\overline{\mathcal{T}_n})$ is isomorphic to the permutation group on a set with $n$ elements. By lemma \ref{lem:involutionsOfSn}, we thus find for any integers $c > 1$ and $k \geq 1$ that the number of real forms of $\n_{\mathcal{T}_n, c}^\C$ is equal to $k+1$ both for $n = 2k$ and $n = 2k+1$. In particular, we can achieve every natural number $ \geq 1$.
	\end{example}

	\section{Indecomposable forms}
	\label{sec:indecomposableForms}

	Example \ref{ex:twoCopiesHeisenberg} illustrates that the direct sum of two complex Heisenberg Lie algebras has  rational forms that do not admit a direct sum of two non-trivial rational Lie algebras, in contrast to the original complex Lie algebra. We say those rational forms are indecomposable and in what follows we determine which forms of a Lie algebra associated to a graph have this property. First, let us give a rigorous definition of indecomposable Lie algebras.
	
	\begin{definition}
		A Lie algebra $\mathfrak{g}^K$ over a field $K$ is said to be \textit{decomposable} if there exist two non-trivial Lie ideals $\mathfrak{h}, \mathfrak{k} \subset \mathfrak{g}^K$ such that $\mathfrak{g}^K = \mathfrak{h} \oplus \mathfrak{k}$. We say a Lie algebra is \textit{indecomposable} if it is not decomposable.
	\end{definition}
	
	\noindent Note that if a Lie algebra $\g^L$ is indecomposable, then any $K$-form $\g^K$ for $K \subset L$ is indecomposable as well. As mentioned above, the converse does not hold. 
	
	If a Lie algebra is decomposable, one can decompose it into its indecomposable summands. Such a decomposition is not unique in general, but in \cite[Theorem 3.3.]{fgh13-1}, uniqueness was proven in case the Lie algebra is centreless, i.e. in case $Z(\g) = \{X \in \g \mid \forall \,Y \in \g:\, [X, Y] = 0 \} = \{0\}$. The theorem was proven for real Lie algebras, but the proof works for any subfield of $\C$. We can restate the result that we need as follows:
	
	\begin{theorem}[Fisher, Gray, Hydon]
		\label{thm:uniquenessDecompositionLieAlgebras}
		Let $\g$ be a Lie algebra over a field $K \subset \C$, $s, t$ positive integers and $\mathfrak{h}_1, \ldots, \mathfrak{h}_s, \mathfrak{k}_1, \ldots, \mathfrak{k}_t \subset \g$ indecomposable ideals such that
		\[ \g = \mathfrak{h}_1  \oplus  \ldots \oplus \mathfrak{h}_s \quad \quad \text{and} \quad \quad \g = \mathfrak{k}_1 \oplus \ldots \oplus \mathfrak{k}_t,\]
		then $s = t$ and up to reordering the summands $\mathfrak{h}_i$, we have $\mathfrak{h}_i \subset \mathfrak{k}_i + Z(\g)$ for all $i \in \{1, \ldots, s\}$.
	\end{theorem}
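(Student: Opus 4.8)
The plan is to recast the statement in the language of modules over the universal enveloping algebra $U(\g)$ and then run a Krull--Schmidt exchange argument. The starting observation is that a vector-space decomposition $\g = \mathfrak{a} \oplus \mathfrak{b}$ into two ideals is automatically a Lie algebra direct sum, since $[\mathfrak{a}, \mathfrak{b}] \subseteq \mathfrak{a} \cap \mathfrak{b} = 0$. Hence a summand of $\g$ as an ideal is exactly the same as a submodule summand of $\g$ regarded as a $U(\g)$-module via $\ad$, and an ideal is indecomposable in the sense of the definition above precisely when it is indecomposable as a module. The two hypotheses therefore present us with two decompositions of the finite-dimensional module $\g$ into indecomposable submodules. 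First I would record the projections $\pi_i\colon \g \to \mathfrak{h}_i$ and $\rho_j\colon \g \to \mathfrak{k}_j$, which are now genuine module endomorphisms: the whole point of passing to modules is that $\mathrm{End}_{U(\g)}(\g)$ is an honest associative ring, whereas the set of Lie-algebra endomorphisms of $\g$ is not closed under addition.

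Next I would apply Fitting's lemma, by which each indecomposable finite-dimensional summand has a \emph{local} endomorphism ring $\mathrm{End}_{U(\g)}(\mathfrak{h}_i)$ in which every non-automorphism is nilpotent. For fixed $i$ and $x \in \mathfrak{h}_i$ we have $x = \sum_j \rho_j(x)$, so applying $\pi_i$ gives $\Id_{\mathfrak{h}_i} = \sum_j \pi_i \rho_j|_{\mathfrak{h}_i}$, a sum of elements of this local ring equal to a unit; hence at least one term $\pi_i \rho_{\sigma(i)}|_{\mathfrak{h}_i}$ is a unit. Writing this as $\big(\pi_i|_{\mathfrak{k}_{\sigma(i)}}\big)\circ\big(\rho_{\sigma(i)}|_{\mathfrak{h}_i}\big)$ shows that $\rho_{\sigma(i)}|_{\mathfrak{h}_i}\colon \mathfrak{h}_i \to \mathfrak{k}_{\sigma(i)}$ is a split injection, so its image is a nonzero direct summand of the indecomposable module $\mathfrak{k}_{\sigma(i)}$ and therefore all of it; thus $\rho_{\sigma(i)}|_{\mathfrak{h}_i}$ is an isomorphism. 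The standard Krull--Schmidt--Azumaya bookkeeping then lets us take $\sigma$ to be a bijection, yielding $s = t$ and $\mathfrak{h}_i \cong \mathfrak{k}_{\sigma(i)}$ for all $i$.

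It remains to promote this isomorphism of abstract summands to the geometric inclusion $\mathfrak{h}_i \subseteq \mathfrak{k}_{\sigma(i)} + Z(\g)$, and this is where the centre enters. The obstruction is the familiar one: when several summands share an isomorphism type, the individual submodules are not determined as subspaces (witness the diagonal copies inside $V \oplus V$), so Krull--Schmidt alone cannot pin down the $\mathfrak{h}_i$. To control the ambiguity I would use that $Z(\g)$ is exactly the maximal trivial submodule of $\g$, since $x$ is central iff $Kx$ is a trivial submodule, and that it respects both decompositions: an element $\sum_i x_i$ with $x_i \in \mathfrak{h}_i$ is central iff each $x_i \in Z(\mathfrak{h}_i)$, whence $Z(\g) = \bigoplus_i Z(\mathfrak{h}_i) = \bigoplus_j Z(\mathfrak{k}_j)$. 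Passing to $\overline{\g} = \g/Z(\g)$ then gives two decompositions $\overline{\g} = \bigoplus_i \overline{\mathfrak{h}_i} = \bigoplus_j \overline{\mathfrak{k}_j}$ into ideals, and a dimension count reduces the desired inclusion to the equality $\overline{\mathfrak{h}_i} = \overline{\mathfrak{k}_{\sigma(i)}}$ in $\overline{\g}$.

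The hard part, which I expect to be the main obstacle, is precisely this last equality, because it is false for general modules and must genuinely use the Lie bracket. The idea is that the only flexibility permitted by the exchange lemma lies along trivial submodules, while the bracket rigidifies everything outside the centre: if some $x \in \mathfrak{h}_i$ had a non-central component $\rho_j(x)$ with $j \neq \sigma(i)$, one would play it against the relations $[\mathfrak{h}_i,\mathfrak{h}_{i'}] = 0$ and $[\mathfrak{k}_j,\mathfrak{k}_{j'}] = 0$, together with the fact that $\rho_j|_{\mathfrak{h}_i}$ is a non-isomorphism and hence nilpotent in the local ring, to force that component into $Z(\mathfrak{k}_j) \subseteq Z(\g)$. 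The centreless case $Z(\g) = 0$ of the whole statement, where the conclusion becomes literal equality of the two decompositions, is exactly the uniqueness theorem of Fisher, Gray and Hydon in \cite{fgh13-1}, whose argument is insensitive to the ground field among subfields of $\C$; the additional content here is precisely to show that every discrepancy between the two decompositions is absorbed by the centre, the remaining steps being formal consequences of Fitting's lemma and the Krull--Schmidt--Azumaya theorem.
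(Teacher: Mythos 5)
The paper does not actually prove this statement: it is quoted from \cite[Theorem 3.3.]{fgh13-1} (Fisher, Gray and Hydon), with only the remark that the argument there, written for real Lie algebras, works over any subfield of $\C$. So there is no in-paper proof to compare against, and your attempt has to stand on its own.

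On its own terms, your proposal is sound up to and including the Krull--Schmidt--Azumaya step: for an ideal that is a direct summand, ideals of $\mathfrak{h}_i$ and $\g$-submodules of $\mathfrak{h}_i$ do coincide (the complementary ideal acts trivially on $\mathfrak{h}_i$), so the module translation is legitimate, and the exchange argument gives $s=t$ together with a bijection $\sigma$ for which $\rho_{\sigma(i)}|_{\mathfrak{h}_i}\colon\mathfrak{h}_i\to\mathfrak{k}_{\sigma(i)}$ is an isomorphism. The genuine gap is the step you yourself flag as ``the hard part'': the claim that each off-diagonal component $\rho_j(x)$, $j\neq\sigma(i)$, is central is never actually established --- ``one would play it against the relations'' is a hope, not an argument. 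Moreover, the fallback you propose, reducing to the centreless theorem of \cite{fgh13-1} by passing to $\g/Z(\g)$, fails: that quotient need not be centreless (for $\g$ two-step nilpotent it is abelian), and $\mathfrak{h}_i/Z(\mathfrak{h}_i)$ need not be indecomposable (take $\mathfrak{h}_i$ a Heisenberg algebra), so the hypotheses of the centreless case are simply not available downstairs. The good news is that the missing step is a short bracket computation rather than a deep obstacle. For any $i,j$ one has $[\mathfrak{h}_i,\mathfrak{k}_j]\subseteq\mathfrak{h}_i\cap\mathfrak{k}_j$: for $x\in\mathfrak{h}_i$ and $w\in\mathfrak{k}_j$, expanding $w$ along the $\mathfrak{h}$-decomposition kills all cross terms and gives $[x,w]=[x,\pi_i(w)]\in\mathfrak{h}_i$, while expanding $x$ along the $\mathfrak{k}$-decomposition gives $[x,w]=[\rho_j(x),w]\in\mathfrak{k}_j$. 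For $j\neq\sigma(i)$ the injectivity of $\rho_{\sigma(i)}|_{\mathfrak{h}_i}$ forces $\mathfrak{h}_i\cap\mathfrak{k}_j=0$, hence $[\mathfrak{h}_i,\mathfrak{k}_j]=0$; consequently $\rho_j(x)$ satisfies $[\rho_j(x),w]=[x,w]=0$ for all $w\in\mathfrak{k}_j$, and it commutes with every $\mathfrak{k}_{j'}$ with $j'\neq j$ for trivial reasons, so $\rho_j(x)\in Z(\g)$ and $x=\rho_{\sigma(i)}(x)+\sum_{j\neq\sigma(i)}\rho_j(x)\in\mathfrak{k}_{\sigma(i)}+Z(\g)$. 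With this inserted, your outline becomes a complete proof.
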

	
	We want to apply the above theorem to Lie algebras associated to graphs. In order to do so we first need to determine some decomposition of $\n_{\G, c}^K$ into decomposable summands. For any two Lie algebras $\g_1, \g_2$, the \textit{direct sum Lie algebra} is the vector space $\g_1 \oplus \g_2$ endowed with a Lie bracket defined by $[X_1 + X_2, Y_1 + Y_2] := [X_1, Y_1] + [X_2, Y_2]$ for any $X_1, Y_1 \in \g_1$ and $X_2, Y_2 \in \g_2$.
	
	Note that the Lie algebra $\n_{\G, c}^K$ satisfies the universal property that for any other $c$-step nilpotent Lie algebra $\g$ over $K$ and a map $i:S \to \g$ which satisfies $[i(\alpha), i(\beta)] = 0$ for any $\alpha, \beta \in S$ with $\{\alpha, \beta\} \notin E$, there exists a unique Lie algebra morphism $f:\n_{\G, c}^K \to \g$ such that $f$ restricts to $i$ on $S$. Using this property, it is not hard to show the following:
	
	\begin{lemma}
		\label{lem:decompositionLieAlgGraphs}
		Let $\G = (S, E)$ be a graph and $K \subset \C$ a field. Let $S_1, S_2 \subset S$ be disjoint sets such that $S = S_1 \cup S_2$ and $\forall \alpha \in S_1, \beta \in S_2: \, \{\alpha , \beta\} \notin E$. Write $\G_1, \, \G_2$ for the subgraphs of $\G$ spanned by $S_1, S_2$, respectively. There exists a unique Lie algebra isomorphism
		\[ \n_{\G, c}^K \stackrel{\cong}{\longrightarrow} \n_{\G_1, c}^K \oplus \n_{\G_2, c}^K \]
		which restricts to the identity on $S$.
	\end{lemma}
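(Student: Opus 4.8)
The plan is to invoke the universal property of $\n_{\G,c}^K$ stated just above the lemma, applied in both directions, and then to check that the two resulting morphisms are mutually inverse by testing them on the generating set $S$. First I would construct the forward map. Set $\g := \n_{\G_1,c}^K \oplus \n_{\G_2,c}^K$, which is again $c$-step nilpotent as a direct sum of $c$-step nilpotent Lie algebras, and define $\iota:S \to \g$ by $\iota(\alpha) = \alpha \in \n_{\G_1,c}^K$ for $\alpha \in S_1$ and $\iota(\beta) = \beta \in \n_{\G_2,c}^K$ for $\beta \in S_2$ (viewing each summand inside $\g$). To apply the universal property I must verify $[\iota(\gamma), \iota(\delta)] = 0$ whenever $\{\gamma, \delta\} \notin E$. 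If both vertices lie in $S_1$ this is the defining relation of $\n_{\G_1,c}^K$ (as $\G_1$ is the induced subgraph, $\{\gamma,\delta\} \notin E$ forces $\{\gamma,\delta\}$ to be a non-edge of $\G_1$); the case of both in $S_2$ is symmetric; and if $\gamma \in S_1$, $\delta \in S_2$ the bracket vanishes automatically because the two summands commute in a direct sum. The universal property then yields a unique Lie algebra morphism $f:\n_{\G,c}^K \to \g$ restricting to $\iota$ on $S$.

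For the inverse, I would apply the universal property to each graph separately. The inclusions $S_1 \hookrightarrow \n_{\G,c}^K$ and $S_2 \hookrightarrow \n_{\G,c}^K$ satisfy the required vanishing (a non-edge of $\G_i$ is a non-edge of $\G$, so the corresponding bracket is zero in $\n_{\G,c}^K$), giving unique morphisms $g_1:\n_{\G_1,c}^K \to \n_{\G,c}^K$ and $g_2:\n_{\G_2,c}^K \to \n_{\G,c}^K$. To assemble these into a morphism $g:\g \to \n_{\G,c}^K$, $(x,y) \mapsto g_1(x) + g_2(y)$, I need the images of $g_1$ and $g_2$ to commute inside $\n_{\G,c}^K$. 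The crucial input is the hypothesis that there are no edges between $S_1$ and $S_2$, so $[\alpha, \beta] = 0$ for $\alpha \in S_1$, $\beta \in S_2$; a Jacobi-identity induction on bracket length then propagates this to the whole subalgebras generated by $S_1$ and $S_2$, i.e. $[g_1(x), g_2(y)] = 0$ for all $x, y$. Granting this, a direct expansion of $[g(x_1,y_1), g(x_2,y_2)]$ shows the two cross terms drop out, so $g$ is a Lie algebra homomorphism.

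Finally I would verify that $f$ and $g$ are mutually inverse and deduce uniqueness. Since $S$ generates $\n_{\G,c}^K$ and $S_1 \cup S_2$ (embedded in the respective summands) generates $\g$, any two Lie algebra morphisms out of these algebras that agree on the generators coincide. The composite $g \circ f$ fixes every vertex of $S$ and is a homomorphism, hence equals $\Id$; likewise $f \circ g$ fixes each generator $(\alpha, 0)$ and $(0,\beta)$ of $\g$, hence equals $\Id$. This shows $f$ is the desired isomorphism, and the same generation principle gives uniqueness: any isomorphism restricting to the identity on $S$ is determined by its values on $S$, so it must coincide with $f$.

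The only step that is more than formal bookkeeping is the commutation of the images of $g_1$ and $g_2$; I expect this to be the main (though mild) obstacle, and the natural way to handle it is the induction on bracket length via the Jacobi identity sketched above, using that $\n_{\G,c}^K$ is spanned by iterated brackets of vertices. Everything else reduces to the universal property and to the fact that $S$ generates the Lie algebra.
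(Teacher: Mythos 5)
Your proposal is correct and follows exactly the route the paper intends: the paper omits the proof entirely, remarking only that the lemma follows from the universal property stated just above it, and you supply precisely the missing details (both applications of the universal property, the Jacobi-identity induction showing the subalgebras generated by $S_1$ and $S_2$ commute, and the check on generators that the two maps are mutually inverse). The induction step is the only substantive point and your sketch of it is sound, since any bracket monomial in $S_2$ of length $>1$ can be split and handled by the inductive hypothesis via Jacobi, with the base case being the defining relation $[\alpha,\beta]=0$ for non-edges.
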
	
	
	Recall that for a graph $\G = (S, E)$, we say two vertices $\alpha, \beta \in S$ are \textit{connected} if there exists a positive integer $k$ and vertices $\gamma_1 = \alpha, \gamma_2, \ldots, \gamma_{k-1}, \gamma_k = \beta \in S$ such that $\{\gamma_i, \gamma_{i+1}\} \in E$ for all $i \in \{1, \ldots, k-1\}$. We say the graph $\G$ is \textit{connected} if all pairs of vertices in $S$ are connected.
	
	\begin{lemma}
		\label{lem:indecomposableLieAlgGraph}
		Let $\G = (S, E)$ be a graph and $K \subset \C$ a field. The following are equivalent:
		\begin{enumerate}[label = (\roman*)]
			\item \label{item:TFAE1} $\G$ is connected,
			\item \label{item:TFAE2} $\n^K_{\G, 2}$ is indecomposable,
			\item \label{item:TFAE3} $\n^K_{\G, c}$ is indecomposable for any $c > 1$.
		\end{enumerate}
	\end{lemma}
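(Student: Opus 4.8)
The plan is to prove the three conditions equivalent via the cycle $(i)\Rightarrow(ii)\Rightarrow(iii)\Rightarrow(i)$, where the implications $(ii)\Rightarrow(iii)$ and $(iii)\Rightarrow(i)$ are soft and the real content lies in $(i)\Rightarrow(ii)$, namely that a connected graph produces an indecomposable $2$-step Lie algebra. For $(iii)\Rightarrow(i)$ I would argue the contrapositive: if $\G$ is disconnected, write $S=S_1\sqcup S_2$ with both parts nonempty and no edges between them, so by Lemma~\ref{lem:decompositionLieAlgGraphs} we get $\n_{\G,c}^K\cong\n_{\G_1,c}^K\oplus\n_{\G_2,c}^K$ with both summands nonzero, for \emph{every} $c>1$, and $(iii)$ fails.

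For $(ii)\Rightarrow(iii)$ I would again use the contrapositive together with the canonical surjection $\n_{\G,c}^K\to\n_{\G,2}^K$ with kernel $\gamma_3(\n_{\G,c}^K)$. If $\n_{\G,c}^K=\mathfrak h\oplus\mathfrak k$ then $\gamma_3(\n_{\G,c}^K)=\gamma_3(\mathfrak h)\oplus\gamma_3(\mathfrak k)$, so the quotient splits as $\n_{\G,2}^K\cong(\mathfrak h/\gamma_3\mathfrak h)\oplus(\mathfrak k/\gamma_3\mathfrak k)$; each summand is nonzero because a nonzero nilpotent ideal never coincides with its own $\gamma_3$. Thus decomposability of $\n_{\G,c}^K$ forces decomposability of $\n_{\G,2}^K$, which is the reduction that lets me concentrate all the work on the $c=2$ case.

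For the main implication $(i)\Rightarrow(ii)$ I would use the grading $\n_{\G,2}^K=V\oplus Z$ with $Z=[\n_{\G,2}^K,\n_{\G,2}^K]$ central, and the alternating bracket form $B\colon V\times V\to Z$, which is well defined on the abelianization (since $Z$ is central) and satisfies $B(\alpha,\beta)=[\alpha,\beta]$; here the $[\alpha,\beta]$ over edges $\{\alpha,\beta\}\in E$ form a basis of $Z$. Given a nontrivial decomposition $\n_{\G,2}^K=\mathfrak h\oplus\mathfrak k$, passing to the abelianization yields $V=V_1\oplus V_2$ (images of $\mathfrak h,\mathfrak k$) and $Z=Z_1\oplus Z_2$; since $[\mathfrak h,\mathfrak k]=0$ this gives the orthogonality $B(V_1,V_2)=0$, and $V_1,V_2\neq0$ (if $V_1=0$ then $\mathfrak h\subseteq Z\subseteq\mathfrak k$, so $\mathfrak h=0$). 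Writing $p_1$, $p_2=\mathrm{id}-p_1$ for the projections of $V$ onto $V_1,V_2$ and $A$ for the matrix of $p_1$ in the vertex basis, I would expand $B(p_1\alpha,p_2\beta)=0$ in the edge basis of $Z$ to obtain, for every edge $\{\gamma,\delta\}$ and all vertices $\alpha,\beta$, the identity $A_{\gamma\alpha}(I-A)_{\delta\beta}=A_{\delta\alpha}(I-A)_{\gamma\beta}$; writing $u,u'$ for rows $\gamma,\delta$ of $A$ and $e_\gamma,e_\delta$ for the standard basis vectors, this is the tensor equation $u\otimes(e_\delta-u')=u'\otimes(e_\gamma-u)$.

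The step I expect to be the main obstacle is precisely that the ideal decomposition need not respect the vertex basis: the summands are typically ``twisted'', so there is no a priori reason for $V_1,V_2$ to be spanned by vertices. The tensor identity dissolves this cleanly. Equality of the two rank-$\le1$ tensors $u\otimes(e_\delta-u')=u'\otimes(e_\gamma-u)$ is impossible when both are nonzero, since that would force $e_\gamma\in Ke_\delta$ with $\gamma\neq\delta$; hence both vanish, and unwinding this forces, for each edge $\{\gamma,\delta\}$, either $u=u'=0$ or $u=e_\gamma$ and $u'=e_\delta$. In other words the two endpoints of every edge share a single ``type'' (row of $A$ equal to $0$, or to the corresponding standard basis vector), so connectivity of $\G$ propagates one uniform type to all vertices, giving $A=0$ or $A=I$, i.e.\ $V_1=0$ or $V_2=0$, contradicting $V_1,V_2\neq0$. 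The degenerate one-vertex case is handled separately, as $\n_{\G,2}^K$ is then one-dimensional and trivially indecomposable. This completes $(i)\Rightarrow(ii)$ and closes the cycle.
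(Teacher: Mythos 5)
Your proof is correct, and the interesting part of it takes a genuinely different route from the paper. The implications (ii)~$\Rightarrow$~(iii) (passing to the quotient by $\gamma_3$ and using that a non-zero nilpotent ideal never equals its own $\gamma_3$) and (iii)~$\Rightarrow$~(i) (contrapositive via Lemma~\ref{lem:decompositionLieAlgGraphs}) are exactly the arguments in the paper. For the key implication (i)~$\Rightarrow$~(ii), however, the paper gives no argument of its own: it cites the ``conversely'' part of Proposition~4.2 of an external reference for the rational case and then extends scalars. You replace that citation with a self-contained computation: the bracket form $B$ on $V$ valued in the edge basis of $Z$, the orthogonality $B(V_1,V_2)=0$ coming from $[\mathfrak h,\mathfrak k]=0$, and the rank-one tensor identity $u\otimes(e_\delta-u')=u'\otimes(e_\gamma-u)$ for the rows of the projection matrix $A$. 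This correctly confronts the real difficulty — that the ideals $\mathfrak h,\mathfrak k$ need not be spanned by vertices — and the case analysis (equality of two non-zero rank-one tensors would force $e_\gamma\in Ke_\delta$, so both vanish; hence each edge forces its endpoints to share a type, and connectivity propagates one type to all of $S$, giving $A=0$ or $A=I$) is sound, as is the separate treatment of the one-vertex graph. The only details worth spelling out in a final write-up are the directness of the sum $V=V_1\oplus V_2$ in the abelianization and the chain $\mathfrak h\subseteq Z=[\mathfrak k,\mathfrak k]\subseteq\mathfrak k$ when $V_1=0$; both are routine. What your approach buys is a self-contained, elementary proof valid over any field of characteristic zero; what the paper's citation buys is brevity.
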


	\begin{proof}
		\ref{item:TFAE1} $\Rightarrow$ \ref{item:TFAE2}. The rational case follows from the `conversely' part of the proof of Proposition 4.2. in \cite{send22-1}. By taking a field extension, it holds for any field $K \subset \C$.
		
		\ref{item:TFAE2} $\Rightarrow$ \ref{item:TFAE3}. Take any $c > 1$ and assume that there exist ideals $\mathfrak{h}, \mathfrak{k} \subset \n_{\G, c}^K$ such that $\n_{\G, c}^K = \mathfrak{h} \oplus \mathfrak{k}$. Note that both $\mathfrak{h}$ and $\mathfrak{k}$ are also nilpotent Lie algebras. It follows that we have a sequence of isomorphisms:
		\[ \n_{\G, 2}^K \cong \n_{\G, c}^K/\gamma_{3}(\n_{\G, c}^K) \cong \mathfrak{h}/\gamma_3(\mathfrak{h}) \oplus \mathfrak{k}/\gamma_3(\mathfrak{k}). \]
		Since $\n_{\G, 2}^K$ is assumed to be indecomposable, this implies that $\mathfrak{h}/\gamma_3(\mathfrak{h}) = \{0\}$ or $\mathfrak{k}/\gamma_3(\mathfrak{k}) = \{0\}$. By nilpotency of $\mathfrak{h}$ and $\mathfrak{k}$, we then get that $\mathfrak{h} = \{0\}$ or $\mathfrak{k} = \{0\}$.
		
		\ref{item:TFAE3} $\Rightarrow$ \ref{item:TFAE1}. We prove this by contraposition. Assume $\G$ is not connected, then there exist a partition of the vertices $S = S_1 \cup S_2$ with $S_1$ and $S_2$ non-empty, such that for any $\alpha \in S_1$ and any $\beta \in S_2$ it holds that $\{\alpha, \beta\} \notin E$. It then readily follows from Lemma \ref{lem:decompositionLieAlgGraphs} that $\n_{\G, c}^K$ is decomposable and concludes the proof.
	\end{proof}

	Note that, as mentioned in section \ref{sec:formsNilpLieAlgGraphs}, there is a canonical vector space isomorphism $V \cong \n_{\G, c}^K/[\n_{\G, c}^K, \n_{\G, c}^K]: v \mapsto v + [\n_{\G, c}^K, \n_{\G, c}^K]$. Let us write
	\[\pi_{\ab}:\n_{\G,c}^K \to V\]
	for the projection onto the abelianization. The projection of the centre to the abelianization can be described by the following lemma. The \textit{degree} of a vertex $\alpha \in S$ is defined as the number of vertices adjacent to $\alpha$.
	
	\begin{lemma}
		\label{lem:projCentreAb}
		For any graph $\G = (S, E)$ and field $K \subset \C$ we have
		\[ \pi_{\ab}\left(Z\left(\n_{\G, c}^K\right)\right) = \Sp_K\left(\{ \alpha \in S \mid \alpha \text{ has degree } 0 \}\right). \]
	\end{lemma}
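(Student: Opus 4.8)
The plan is to establish the two inclusions separately, the nontrivial direction being that any central element has abelianization supported on the isolated vertices.

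For the inclusion $\supseteq$, I would first check that every vertex $\alpha \in S$ of degree $0$ lies in $Z(\n_{\G,c}^K)$. Being isolated means $\{\alpha, \beta\} \notin E$ for all $\beta \in S$, so $[\alpha, \beta] = 0$ in $\n_{\G,c}^K$ for every $\beta \in S$, i.e.\ $[\alpha, V] = 0$. Since $\n_{\G,c}^K$ is generated as a Lie algebra by $V$, an induction on the grading using the derivation form of the Jacobi identity $[\alpha, [v,w]] = [[\alpha,v],w] + [v,[\alpha,w]]$ shows $[\alpha, V^{i+1}] = 0$ from $[\alpha, V^i] = 0$, hence $[\alpha, \n_{\G,c}^K] = 0$. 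As $\alpha \in V$ and $\pi_{\ab}|_V = \Id$, we get $\alpha \in \pi_{\ab}(Z(\n_{\G,c}^K))$, and taking $K$-spans gives the inclusion.

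For the inclusion $\subseteq$, let $X \in Z(\n_{\G,c}^K)$ and decompose it along the grading (\ref{eq:gradingPartComLieAlg}) as $X = X_1 + X'$ with $X_1 \in V$ and $X' \in \bigoplus_{i=1}^{c-1} V^{i+1}/W_\G^i = [\n_{\G,c}^K, \n_{\G,c}^K]$, so that $\pi_{\ab}(X) = X_1$. Writing $X_1 = \sum_{\alpha \in S} c_\alpha \alpha$, I must show $c_\alpha = 0$ whenever $\deg \alpha > 0$. Fix $\beta \in S$: since $[X_1, \beta]$ is homogeneous of degree $2$, lying in $V^2/W_\G^1$, while $[X', \beta]$ is a sum of components of degree $\geq 3$, the vanishing of $[X, \beta]$ forces its degree-$2$ part $[X_1, \beta] = \sum_{\alpha} c_\alpha [\alpha, \beta]$ to vanish in $V^2/W_\G^1$. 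To extract the coefficients I would invoke the explicit description of this summand: in the free Lie algebra $V^2 = [V,V]$ has basis $\{[\alpha, \gamma] \mid \alpha < \gamma\}$ for a fixed ordering of $S$, and $W_\G^1 = W_\G$ is spanned by those $[\alpha,\gamma]$ with $\{\alpha,\gamma\} \notin E$, so the images of $\{[\alpha,\gamma] \mid \{\alpha,\gamma\} \in E\}$ form a basis of $V^2/W_\G$. Hence $[\alpha,\beta] = 0$ for $\alpha \notin \Omega'(\beta)$, while the surviving $\{[\alpha,\beta] \mid \alpha \in \Omega'(\beta)\}$ are linearly independent; the relation $\sum_{\alpha \in \Omega'(\beta)} c_\alpha [\alpha,\beta] = 0$ then yields $c_\alpha = 0$ for all $\alpha \in \Omega'(\beta)$. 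Letting $\beta$ range over $S$, every vertex of positive degree lies in some $\Omega'(\beta)$, so $c_\alpha = 0$ whenever $\deg \alpha > 0$, giving $X_1 \in \Sp_K(\{\alpha \in S \mid \deg \alpha = 0\})$.

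The main obstacle is this final step of reading off the coefficients, which hinges on correctly identifying the degree-$2$ summand $V^2/W_\G^1$ and verifying that the brackets $[\alpha,\beta]$ ranging over the edges of $\G$ remain linearly independent there; this linear independence is exactly what permits the coefficient-by-coefficient conclusion. The remainder is a routine consequence of the grading and of $\n_{\G,c}^K$ being generated by $V$.
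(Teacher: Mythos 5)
Your proposal is correct and follows essentially the same route as the paper: both directions are handled identically, with the degree separation (your homogeneous-component argument is the paper's observation that $[V,V]\cap [V,[\n_{\G,c}^K,\n_{\G,c}^K]]=\{0\}$ from the grading) followed by the linear independence of the brackets $[\alpha,\beta]$ over $\alpha\in\Omega'(\beta)$ in $V^2/W_\G$. Your extra detail justifying that independence via the degree-two Hall basis of the free Lie algebra is exactly the fact the paper uses implicitly.
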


	\begin{proof}
		Take any $\alpha \in S$ of degree 0. It is clear that $[\alpha, \beta] = 0$ in $\n_{\G, c}^K$ for any $\beta \in S$ and thus since $S$ generates $\n_{\G, c}^K$ that $\alpha \in Z(\n_{\G, c}^K)$. By consequence we get the inclusion $$\Sp_K\left(\{ \alpha \in S \mid \alpha \text{ has degree } 0 \}\right) \subseteq \pi_{\ab}\left(Z\left(\n_{\G, c}^K\right)\right).$$
		
		For the other inclusion, take any $v \in \pi_{\ab}\left(Z\left(\n_{\G, c}^K\right)\right)$. Then there exists a $w \in [\n_{\G, c}^K, \n_{\G, c}^K]$ such that $v + w \in Z\left(\n_{\G, c}^K\right)$. By consequence, for any $\beta \in S$ it must hold that $[\beta, v+w] = [\beta, v] + [\beta, w] = 0$. From the grading of $\n_{\G, c}^K$ as given in (\ref{eq:gradingPartComLieAlg}) it follows that $[V, V] \cap [V, [\n_{\G, c}^K, \n_{\G, c}^K]] = \{0\}$ and thus that $\forall \beta \in S$ it holds that $[\beta, v] = 0$. Let $f:S \to K$ be the unique function such that $v = \displaystyle \sum_{\alpha \in S} f(\alpha) \alpha$. We then get that $\displaystyle \sum_{\alpha \in S} f(\alpha)[\beta, \alpha] = 0$ for any $\beta \in S$. Using the relations in $\n_{\G, c}^K$, this reduces to $\displaystyle \sum_{\alpha \in \Omega'(\beta)} f(\alpha)[\beta, \alpha] = 0$, where we remind the reader of the definition of $\Omega'(\beta)$ in (\ref{eq:openAndClosedNeigh}). Since the set $\{[\beta, \alpha] \mid \alpha \in \Omega'(\beta)\}$ is linearly independent in $\n_{\G, c}^K$, it follows that $f(\alpha) = 0$ for all $\alpha \in \Omega'(\beta)$ and all $\beta \in S$. This exactly means that $v \in \Sp_K\left(\{ \alpha \in S \mid \alpha \text{ has degree } 0 \}\right)$, which proves the other inclusion.
	\end{proof}
	
	The relation `being connected' as defined above is an equivalence relation on $S$. The equivalence classes are called the \textit{connected components} of $\G$. We let $\mathcal{C}(\G)$ denote the set of all connected components of $\G$. We can now combine Theorem \ref{thm:uniquenessDecompositionLieAlgebras}, Lemma \ref{lem:decompositionLieAlgGraphs}, Lemma \ref{lem:indecomposableLieAlgGraph} and Lemma \ref{lem:projCentreAb} to prove the following result for the decomposition of Lie algebras associated to graphs.
	
	\begin{prop}
		\label{prop:uniquenessDecompositionLieAlgGraphs}
		Let $\G = (S, E)$ be a graph with no vertices of degree 0, $K \subset \C$ a field $k$ a positive integer and $\mathfrak{h}_1, \ldots, \mathfrak{h}_k \subset \n_{\G, c}^K$ ideals such that $\n_{\G, c}^K = \mathfrak{h}_1 \oplus \ldots \oplus \mathfrak{h}_k$. Then $k = |\mathcal{C}(\G)|$ and there exists an ordering of the connected components $\mathcal{C}(\G) = \{C_1, \ldots, C_k\}$ such that $\pi_{\ab}(\mathfrak{h}_i) = \Sp_K(C_i)$ for all $i \in \{1, \ldots, k\}$.
	\end{prop}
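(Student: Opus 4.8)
The plan is to combine the uniqueness result of Fisher--Gray--Hydon (Theorem \ref{thm:uniquenessDecompositionLieAlgebras}) with the two structural facts we have established: first, that $\n_{\G, c}^K$ splits along the connected components of $\G$ (Lemma \ref{lem:decompositionLieAlgGraphs}) into indecomposable pieces (Lemma \ref{lem:indecomposableLieAlgGraph}); and second, that the centre projects trivially to the abelianization when $\G$ has no degree-$0$ vertices (Lemma \ref{lem:projCentreAb}). The key observation that makes the abelianization bookkeeping clean is that the hypothesis ``no vertices of degree $0$'' forces $\pi_{\ab}(Z(\n_{\G,c}^K)) = \{0\}$, so the ambiguity ``up to $Z(\g)$'' in Theorem \ref{thm:uniquenessDecompositionLieAlgebras} disappears at the level of $V$.

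First I would produce a reference decomposition. Writing $\mathcal{C}(\G) = \{C_1, \ldots, C_m\}$ for the connected components, iterated application of Lemma \ref{lem:decompositionLieAlgGraphs} gives a Lie algebra isomorphism
\[ \n_{\G, c}^K \cong \n_{\G_{C_1}, c}^K \oplus \ldots \oplus \n_{\G_{C_m}, c}^K \]
restricting to the identity on $S$, where $\G_{C_j}$ is the subgraph spanned by $C_j$. Since each $C_j$ is connected, Lemma \ref{lem:indecomposableLieAlgGraph} guarantees each summand $\mathfrak{k}_j := \n_{\G_{C_j}, c}^K$ is indecomposable, and by construction $\pi_{\ab}(\mathfrak{k}_j) = \Sp_K(C_j)$. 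This is a decomposition of $\n_{\G, c}^K$ into $m = |\mathcal{C}(\G)|$ indecomposable ideals.

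Next I would invoke Theorem \ref{thm:uniquenessDecompositionLieAlgebras}. Applying it to the given decomposition $\n_{\G, c}^K = \mathfrak{h}_1 \oplus \ldots \oplus \mathfrak{h}_k$ and the reference decomposition into the $\mathfrak{k}_j$, I first note that each $\mathfrak{h}_i$ is indecomposable: indeed, a non-trivial direct sum decomposition of $\mathfrak{h}_i$ would refine the given decomposition of $\n_{\G, c}^K$ into more than $k$ indecomposable summands, contradicting the equality of the number of summands forced by the theorem. Granting this, Theorem \ref{thm:uniquenessDecompositionLieAlgebras} yields $k = m = |\mathcal{C}(\G)|$ and, after reordering, $\mathfrak{h}_i \subset \mathfrak{k}_i + Z(\n_{\G, c}^K)$ for all $i$. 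Applying $\pi_{\ab}$ and using that $\pi_{\ab}$ is a morphism of abelian Lie algebras gives $\pi_{\ab}(\mathfrak{h}_i) \subset \pi_{\ab}(\mathfrak{k}_i) + \pi_{\ab}(Z(\n_{\G, c}^K))$. By Lemma \ref{lem:projCentreAb} and the hypothesis that $\G$ has no degree-$0$ vertices, the second term vanishes, so $\pi_{\ab}(\mathfrak{h}_i) \subset \pi_{\ab}(\mathfrak{k}_i) = \Sp_K(C_i)$.

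Finally I would upgrade this inclusion to an equality by a dimension count. Since $\pi_{\ab}$ restricts to an isomorphism on $V$, the spaces $\Sp_K(C_1), \ldots, \Sp_K(C_m)$ are independent and sum to all of $V$, whence $\sum_i \dim \pi_{\ab}(\mathfrak{h}_i) \leq \dim V = \sum_i \dim \Sp_K(C_i)$; but the $\mathfrak{h}_i$ also sum to $\n_{\G, c}^K$, so $\sum_i \pi_{\ab}(\mathfrak{h}_i) = V$, forcing each inclusion to be an equality $\pi_{\ab}(\mathfrak{h}_i) = \Sp_K(C_i)$. The main obstacle I anticipate is the indecomposability of the abstract summands $\mathfrak{h}_i$: the statement allows arbitrary decompositions into ideals that need not individually be indecomposable, so one must either phrase the uniqueness argument to accommodate this or, as sketched above, refine and compare summand counts. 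Once indecomposability of the $\mathfrak{h}_i$ is secured, the remainder is a clean application of the two lemmas and a dimension count.
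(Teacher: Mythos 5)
Your proof follows the paper's own argument almost line for line: the reference decomposition $\n_{\G,c}^K \cong \bigoplus_j \n_{\G_{C_j},c}^K$ into indecomposable ideals via Lemmas \ref{lem:decompositionLieAlgGraphs} and \ref{lem:indecomposableLieAlgGraph}, the application of Theorem \ref{thm:uniquenessDecompositionLieAlgebras}, the vanishing of $\pi_{\ab}(Z(\n_{\G,c}^K))$ from Lemma \ref{lem:projCentreAb}, and the final surjectivity count upgrading the inclusions $\pi_{\ab}(\mathfrak{h}_i) \subset \Sp_K(C_i)$ to equalities. All of that is correct and is exactly what the paper does.

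The one point where you deviate is the paragraph claiming that each $\mathfrak{h}_i$ is automatically indecomposable, and that argument is circular: you appeal to ``the equality of the number of summands forced by the theorem,'' but Theorem \ref{thm:uniquenessDecompositionLieAlgebras} only applies once \emph{both} decompositions are known to consist of indecomposable ideals, which is precisely what you are trying to establish. Refining a decomposable $\mathfrak{h}_i$ into indecomposable pieces would only yield $k \le |\mathcal{C}(\G)|$, not a contradiction. In fact no argument can close this gap, because the statement is false as literally written when the $\mathfrak{h}_i$ are allowed to be decomposable: for a graph with two connected components and no isolated vertices, take $k=1$ and $\mathfrak{h}_1 = \n_{\G,c}^K$; then $k \neq |\mathcal{C}(\G)|$. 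The proposition has to be read with the additional hypothesis that the $\mathfrak{h}_i$ are indecomposable --- the paper's own proof tacitly assumes this when it feeds the $\mathfrak{h}_i$ into Theorem \ref{thm:uniquenessDecompositionLieAlgebras} --- or else the conclusion must be weakened to $k \le |\mathcal{C}(\G)|$ together with $\pi_{\ab}(\mathfrak{h}_i) = \Sp_K\left(\bigcup_{j \in J_i} C_j\right)$ for some partition $\{J_i\}_{i=1}^{k}$ of the set of components (which is the form actually used later in the paper). Your instinct that the missing indecomposability is the main obstacle was exactly right; the resolution is to add it as a hypothesis, not to derive it.
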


	\begin{proof}
		Let $\{C_1, \ldots, C_l\}$ be some ordering of the connected components of $\G$ with $l = |\mathcal{C}(\G)|$. Let $\n_i \subset \n_{\G, c}^K$ be the Lie subalgebra in $\n_{\G, c}^K$ generated by $C_i$ for all $i \in \{1, \ldots, l\}$. By Lemma \ref{lem:decompositionLieAlgGraphs}, it is clear that the $\n_i$'s are ideals, that $\n_{\G, c}^K = \n_1 \oplus \ldots \oplus \n_k$ and if $\G_i$ denotes the subgraph spanned by $C_i$, that $\n_i \cong \n_{\G_i, c}^K$. Moreover, since each graph $\G_i$ is connected, Lemma \ref{lem:indecomposableLieAlgGraph} implies that $\n_i \cong \n_{\G_i , c}^K$ is indecomposable for each $i \in \{1, \ldots, l\}$. By Theorem \ref{thm:uniquenessDecompositionLieAlgebras}, we have $k = l = |\mathcal{C}(\G)|$ and we can fix a reordering of the connected components of $\G$ such that $\mathfrak{h_i} \subset \n_i + Z(\n_{\G, c}^K)$ for all $i \in \{1, \ldots , k\}$. Since $\G$ has no vertices of degree 0, we find by Lemma \ref{lem:projCentreAb} that $\pi_{\ab}(Z(\n_{\G, c}^K)) = \{0\}$ and thus that $\pi_{\ab}(\mathfrak{h}_i) \subset \pi_{\ab}(\n_i)$ for all $i \in \{1, \ldots, k\}$. Note that $\pi_{ab}(\n_i) = \Sp_K(C_i)$ and thus that $V = \pi_{ab}(\n_1) \oplus \ldots \oplus \pi_{\ab}(\n_k)$. Since $\pi_{\ab}$ is surjective we must have $V = \pi_{\ab}(\mathfrak{h}_1 + \ldots + \mathfrak{h}_k) = \pi_{\ab}(\mathfrak{h}_1) + \ldots + \pi_{\ab}(\mathfrak{h}_k)$ which implies that $\pi_{\ab}(\mathfrak{h}_i) = \pi_{ab}(\n_i) = \Sp_K(C_i)$ for all $i \in \{1, \ldots, k\}$. This concludes the proof.
 	\end{proof}
	
	Let $\G = (S, E)$ be a graph and $L/K$ a Galois extension of subfields of $\C$. Recall the natural action of $\Gal(L/K)$ on $\n_{\G, c}^L$. Note that $\Sp_L(S) = V \subset \n_{\G, c}^L$ is preserved under this action and thus that we have an induced action of $\Gal(L/K)$ on $V$. The vertices $S \subset V$ are fixed under this action. If $\rho:\Gal(L/K) \to \Aut(\G)$ is a continuous morphism and $\n_{\rho, c}^K$ the associated $K$-form of $\n_{\G, c}^L$, one can check that
	\begin{equation}
		\label{eq:projRatFormOntoAbelianization}
		\pi_{\ab}(\n_{\rho, c}^K) = \{v \in V \mid \forall \sigma \in \Gal(L/K): \overline{P}(\rho_\sigma)(\asi{v}) = v\}
	\end{equation}
	where $\overline{P}:\Aut(\overline{\G}) \to \GL(V)$ is the morphism as defined in section \ref{sec:quotientGraph}.
	
	Let $\Lambda$ denote the set of coherent components of $\G$. As one can check, every connected component which counts at least two vertices is a disjoint union of coherent components. On the other hand, the union of all connected components which are singletons is equal to a coherent component. This is illustrated by the example drawn below. Dashed lines represent coherent components while full lines (apart from de edges) represent connected components.
	
	\begin{figure}[H]
		\centering
		\begin{tikzpicture}
			\filldraw [black] (-0.5, -0.5) circle (2pt);
			\filldraw [black] (0.5, -0.5) circle (2pt);
			\filldraw [black] (-2, -0.5) circle (2pt);
			\filldraw [black] (-3, -0.5) circle (2pt);
			\filldraw [black] (2, -0.5) circle (2pt);
			\filldraw [black] (3, -0.5) circle (2pt);
			\filldraw [black] (0, 0.5) circle (2pt);
			\filldraw [black] (-2, 0.5) circle (2pt);
			\filldraw [black] (-3, 0.5) circle (2pt);
			\filldraw [black] (2, 0.5) circle (2pt);
			\filldraw [black] (3, 0.5) circle (2pt);
			
			\draw (0, 0.5) -- (-0.5, -0.5);
			\draw (0, 0.5) -- (0.5, -0.5);
			\draw (2, 0.5) -- (3, 0.5);
			\draw (2, 0.5) -- (2, -0.5);
			\draw (3, 0.5) -- (3, -0.5);
			\draw (3, -0.5) -- (2, 0.5);
			
			\draw (0,0) circle (1.2);
			\draw (2.5,0) circle (1.2);
			\draw[dashed] (-2.5,0) circle (1.2);
			
			\draw (-3,0.5) circle (0.4);
			\draw (-2,0.5) circle (0.4);
			\draw (-3,-0.5) circle (0.4);
			\draw (-2,-0.5) circle (0.4);
			
			\draw[dashed] (0,0.5) circle (0.4);
			
			\draw[dashed] (2,0.5) circle (0.4);
			\draw[dashed] (2,-0.5) circle (0.4);
			
			\draw[dashed, cm={1,0,0,2.2,(0,0)}] (3, 0) circle (0.4);
			\draw[dashed, cm={1,0,0,2.2,(0,0)}] (3, 0) circle (0.4);
			\draw[dashed, cm={2.2,0,0,1,(0,0)}] (0, -0.5) circle (0.4);
		\end{tikzpicture}
	\end{figure}
	
	For any $\varphi \in \Aut(\overline{\G})$, the images of two coherent components which are subsets of the same connected component are again subsets of the same connected component. Let $C \in \mathcal{C}(\G)$ be a connected component. In case $C$ counts at least two vertices, there exist coherent components $\lambda_1, \ldots, \lambda_k$ such that $C = \lambda_1 \sqcup \ldots \sqcup \lambda_k$ and we define $\chi(\varphi)(C) = \varphi(\lambda_1) \sqcup \ldots \sqcup \varphi(\lambda_k)$. In case $C$ is a singleton, we define $\chi(\varphi)(C) = C$. This gives a morphism
	\[ \chi:\Aut(\overline{\G}) \to \Perm(\mathcal{C}(\G)). \]
	Recall that an action is called \textit{transitive} if it has only one orbit.
	
	\begin{theorem}
		Let $\G = (S, E)$ be a graph, $L/K$ a Galois extension of subfields of $\C$ and $\rho:\Gal(L/K) \to \Aut(\overline{\G})$ a continuous morphism. The Lie algebra $\n_{\rho, c}^K$ is indecomposable if and only if the $\chi \circ \rho$-action on the set of connected components $\mathcal{C}(\G)$ is transitive.
	\end{theorem}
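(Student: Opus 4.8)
The plan is to prove the logically equivalent statement that $\n_{\rho,c}^K$ is \emph{decomposable} if and only if $\chi\circ\rho$ acts on $\mathcal C(\G)$ with more than one orbit, i.e.\ non-transitively; the bridge between the algebraic and combinatorial sides is the abelianization together with the uniqueness of the indecomposable decomposition. Before splitting into the two directions I would record two facts about the twisted semilinear action $v\mapsto i(\rho_\sigma)(\asi{v})$ defining $\n_{\rho,c}^K$. First, since $i(\rho_\sigma)$ is graded and $v\mapsto\asi{v}$ preserves the grading (it fixes the vertices), this action preserves the grading of $\n_{\G,c}^L$; hence $\n_{\rho,c}^K$ is a graded $K$-subalgebra and, by (\ref{eq:projRatFormOntoAbelianization}), $\pi_{\ab}(\n_{\rho,c}^K)=V^\rho:=\{v\in V\mid \overline P(\rho_\sigma)(\asi{v})=v\ \forall\sigma\}$. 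Second, for every connected component $C$ one has $\overline P(\rho_\sigma)\big(\Sp_L(C)\big)=\Sp_L\big(\chi(\rho_\sigma)(C)\big)$: for a non-singleton $C=\lambda_1\sqcup\cdots\sqcup\lambda_k$ this is immediate from $\overline P(\rho_\sigma)(\alpha_{\lambda,i})=\alpha_{\rho_\sigma(\lambda),i}$, while for a singleton $C$ (a degree-$0$ vertex) it holds because all degree-$0$ vertices form the single coherent component fixed by every graph automorphism, so the chosen splitting $r$ fixes each such vertex pointwise and $\chi(\rho_\sigma)(C)=C$.

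For the direction non-transitive $\Rightarrow$ decomposable I would take a partition $\mathcal C(\G)=\mathcal O_1\sqcup\mathcal O_2$ into two non-empty $(\chi\circ\rho)$-invariant sets and put $S_i=\bigcup_{C\in\mathcal O_i}C$. Vertices in different connected components are non-adjacent, so Lemma~\ref{lem:decompositionLieAlgGraphs} gives $\n_{\G,c}^L=\mathfrak a_1\oplus\mathfrak a_2$ with $\mathfrak a_i$ the ideal generated by $S_i$. The invariance of $\mathcal O_i$ together with the second fact above shows that each $\mathfrak a_i$ is preserved by the twisted action, so decomposing any fixed vector and using uniqueness of the $\mathfrak a_1\oplus\mathfrak a_2$ decomposition yields $\n_{\rho,c}^K=(\n_{\rho,c}^K\cap\mathfrak a_1)\oplus(\n_{\rho,c}^K\cap\mathfrak a_2)$. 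Each summand is an ideal of $\n_{\rho,c}^K$ and a $K$-form of the non-zero algebra $\mathfrak a_i$, hence non-zero, so $\n_{\rho,c}^K$ is decomposable.

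For the converse decomposable $\Rightarrow$ non-transitive I would first dispose of degree-$0$ vertices. If $\G$ is a single vertex the claim is trivial; if $\G$ has a degree-$0$ vertex but $|S|\ge 2$, then $\{\alpha\}$ is a $(\chi\circ\rho)$-fixed singleton orbit sitting alongside at least one further component, so the action is automatically non-transitive, matching the fact (via Lemma~\ref{lem:decompositionLieAlgGraphs} and Lemma~\ref{lem:projCentreAb}) that the span $\Sp_K(S_0)$ of the degree-$0$ vertices $S_0$ is central and splits off, making $\n_{\rho,c}^K$ decomposable. So I may assume $\G$ has no degree-$0$ vertex and apply Proposition~\ref{prop:uniquenessDecompositionLieAlgGraphs}. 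Given $\n_{\rho,c}^K=\mathfrak h\oplus\mathfrak k$ with both summands non-zero, I would extend scalars to get ideals $\mathfrak h^L,\mathfrak k^L$ of $\n_{\G,c}^L$, refine them into indecomposables over $L$, and use Proposition~\ref{prop:uniquenessDecompositionLieAlgGraphs} to conclude that $U_1:=\pi_{\ab}(\mathfrak h^L)=\bigoplus_{C\in\mathcal D_1}\Sp_L(C)$ for some $\mathcal D_1\subseteq\mathcal C(\G)$. Since $\mathfrak h,\mathfrak k$ are non-zero ideals of the nilpotent algebra $\n_{\rho,c}^K$ their abelianizations are non-zero, and as the grading is preserved one gets $V^\rho=\pi_{\ab}(\mathfrak h)\oplus\pi_{\ab}(\mathfrak k)$ with $U_1=\pi_{\ab}(\mathfrak h)\otimes_K L$; thus $\mathcal D_1$ and its complement are both non-empty, so $\mathcal D_1$ is proper. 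Finally $\pi_{\ab}(\mathfrak h)\subseteq V^\rho$ spans $U_1$ over $L$, so $U_1$ is defined over $K$, i.e.\ stable under the twisted action; since the vertices are Galois-fixed this stability reads $\overline P(\rho_\sigma)(U_1)=U_1$, and by the second fact above this is exactly the statement that $\mathcal D_1$ is $(\chi\circ\rho)$-invariant. A non-empty proper invariant subset witnesses non-transitivity, completing the proof.

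I expect the genuine difficulty to lie entirely in this last direction, specifically in the descent step matching the $L$-decomposition to a $(\chi\circ\rho)$-invariant partition of $\mathcal C(\G)$. The delicate points are that the indecomposable pieces of $\n_{\G,c}^L$ must be identified with connected components only after projecting to the abelianization (Proposition~\ref{prop:uniquenessDecompositionLieAlgGraphs}, which itself relies on Theorem~\ref{thm:uniquenessDecompositionLieAlgebras} and the centre computation of Lemma~\ref{lem:projCentreAb}), that $\pi_{\ab}$ is simultaneously equivariant for the grading and for the twisted Galois action, and that Galois descent for the vertex-spanned subspace $U_1$ translates $K$-rationality into $\overline P(\rho)$-stability. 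The separate treatment of degree-$0$ vertices, forced by the hypothesis of Proposition~\ref{prop:uniquenessDecompositionLieAlgGraphs}, is the main piece of bookkeeping one must not overlook.
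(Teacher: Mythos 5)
Your proposal is correct and follows essentially the same route as the paper's proof: both directions are the contrapositives of the paper's two implications, using the same mechanisms --- Lemma \ref{lem:decompositionLieAlgGraphs} applied to a $(\chi\circ\rho)$-invariant union of connected components to split the form, and, conversely, extension of scalars plus Proposition \ref{prop:uniquenessDecompositionLieAlgGraphs} and the Galois-stability of the vertex-spanned abelianization to produce an invariant proper partition of $\mathcal{C}(\G)$, with the same separate treatment of degree-$0$ vertices. The only (minor, favourable) difference is that you explicitly refine the $L$-decomposition into indecomposables before invoking Proposition \ref{prop:uniquenessDecompositionLieAlgGraphs}, which the paper applies directly to a two-term decomposition even though its proof rests on Theorem \ref{thm:uniquenessDecompositionLieAlgebras} for indecomposable summands.
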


	\begin{proof}
		Assume $\n_{\rho, c}^K$ is indecomposable. Let $C \in \mathcal{C}(\G)$ be a connected component. We write $\tilde{S}$ for the vertices which lie in a connected component which lies in the $\chi \circ \rho$-orbit of $C$, i.e. $\tilde{S} = \{ \alpha \in S \mid \exists \sigma \in \Gal(L/K): \alpha \in \chi(\rho_\sigma)(C) \}$. Let $\n_1$ and $\n_2$ denote the subalgebras of $\n_{\G, c}^L$ generated by $\tilde{S}$ and $S \setminus \tilde{S}$, respectively. It follows that $r(\rho_\sigma)(\tilde{S}) = \tilde{S}$ and by consequence that $i(\rho_\sigma)(\n_1) = \n_1$ and $i(\rho_\sigma)(\n_2) = \n_2$ for any $\sigma \in \Gal(L/K)$. Moreover, since $\n_1$ and $\n_2$ are generated by sets of vertices, we have that $i(\rho_\sigma)(\asi{\n_1}) = \n_1$ and $i(\rho_\sigma)(\asi{\n_2}) = \n_2$ for any $\sigma \in \Gal(L/K)$. By Lemma \ref{lem:decompositionLieAlgGraphs}, $\n_1$ and $\n_2$ are ideals and we have a direct sum $\n_{\G, c}^L = \n_1 \oplus \n_2$. Thus, for an arbitrary $v \in \n_{\G, c}^L$ there exist unique vectors $v_1 \in \n_1$ and $v_2 \in \n_2$ such that $v = v_1 + v_2$. For any $\sigma \in \Gal(L/K)$, we then have the equivalences
		\begin{align*}
			i(\rho_\sigma)(\asi{v}) = v &\Leftrightarrow i(\rho_\sigma)(\asi{(v_1 + v_2)}) = v_1 + v_2\\
			&\Leftrightarrow i(\rho_\sigma)(\asi{v_1}) + i(\rho_\sigma)(\asi{v_2}) = v_1 + v_2\\
			&\Leftrightarrow i(\rho_\sigma)(\asi{v_1}) = v_1 \quad \text{and} \quad i(\rho_\sigma)(\asi{v_2}) = v_2.
		\end{align*}
		where the last equivalence uses $i(\rho_\sigma)(\asi{\n_j}) = \n_j$ for $j = 1,2$. Since $\n_1$, $\n_2$ are themselves Lie algebras associated to a graph, we know that
		\[ \mathfrak{m}_j = \{ v \in \n_j \mid \forall \sigma \in \Gal(L/K): i(\rho_\sigma)(v) = v \} \]
		defines a $K$-form of $\n_j$. The above equivalences then imply that $\n_{\rho, c}^K = \mathfrak{m}_1 \oplus \mathfrak{m}_2$. Moreover, since $\n_1$ and $\n_2$ are ideals, the same holds for $\mathfrak{m}_1$ and $\mathfrak{m}_2$. Since we assumed $\n_{\rho, c}^K$ to be indecomposable and $\mathfrak{m}_1 \neq \{0\}$ by construction, we must have that $\mathfrak{m}_2 = \{0\}$ and thus that $S \setminus \tilde{S} = \emptyset$. This proves that the $\chi \circ \rho$-action on $\mathcal{C}(\G)$ is transitive.
		
		Conversely, assume that the $\chi \circ \rho$-action on $\mathcal{C}(\G)$ is transitive. Since this action fixes connected components which are singletons, transitivity implies that either there are no connected components which are singletons or that $S$ itself is a singleton. In the latter case, the Lie algebra $\n_{\G, c}^L$ is itself indecomposable and by consequence so are all of its forms. Thus we can assume that there are no connected components which are singletons. This is equivalent to saying there are no vertices of degree 0. Assume there are ideals $\mathfrak{m}_1$, $\mathfrak{m}_2$ of the Lie algebra $\n_{\rho, c}^K$ such that $\n_{\rho, c}^K = \mathfrak{m}_1 \oplus \mathfrak{m}_2$. Define $\n_{j} = \Sp_L(\mathfrak{m}_j)$ for $j = 1, 2$. Since $\n_{\G, c}^K$ is a form of $\n_{\G, c}^L$, it follows that $\n_1$, $\n_2$ are ideals of $\n_{\G, c}^L$ and that $\n_{\G, c}^L = \n_1 \oplus \n_2$. By Proposition \ref{prop:uniquenessDecompositionLieAlgGraphs}, we must have subsets of vertices $S_1, S_2 \subset S$, each a union of connected components, such that $S = S_1 \sqcup S_2$ and $\pi_{\ab}(\n_j) = \Sp_L(S_j)$ for $j = 1, 2$. Take $\sigma \in \Gal(L/K)$ arbitrarily. Note that $\asi{(\Sp_L(S_j))} = \Sp_L(S_j)$ for $j = 1, 2$ since these subspaces are spanned by vertices. Using that $\pi_{\ab}(\mathfrak{m}_j) \subset \Sp_L(S_j)$, we thus find that $\asi{(\pi_{\ab}(\m_j))} \subset \Sp_L(S_j)$ for $j = 1, 2$. From equation (\ref{eq:projRatFormOntoAbelianization}) it follows that $\overline{P}(\rho_\sigma)^{-1}(\pi_{\ab}(\mathfrak{m}_j)) = \prescript{\sigma}{}{\mathfrak{m}_j} \subset{\Sp_L(S_j)}$. But since $\Sp_L(\mathfrak{m}_j) = \Sp_L(S_j)$, we also have $\overline{P}(\rho_\sigma)^{-1}(\Sp_L(S_j)) = \Sp_L(S_j)$. Since $\sigma$ was chosen arbitrarily, this implies that $r(\rho_\sigma)(S_j) = S_j$ for all $\sigma \in \Gal(L/K)$. Since we assumed that the $\chi \circ \rho$-action on $\mathcal{C}(\G)$ is transitive, it follows that either $S_1$ or $S_2$ is the empty set and thus that either $\mathfrak{m}_1 = \{0\}$ or $\mathfrak{m}_2 = \{0\}$.
	\end{proof}

	\bibliography{ref}
	\bibliographystyle{plain}	

\end{document}